\renewcommand{\AA}{\mathbb{A}}
\newcommand{\FF}{\mathbb{F}}
\newcommand{\GG}{\mathbb{G}}
\newcommand{\KK}{\mathbb{K}}
\newcommand{\M}{\mathfrak{M}}
\newcommand{\PP}{\mathbb{P}}
\newcommand{\PGL}{\mathrm{PGL}}
\newcommand{\QQ}{\mathbb{Q}}
\newcommand{\R}{\mathfrak{R}}
\renewcommand{\S}{\mathfrak{S}}
\newcommand{\T}{\mathfrak{T}}
\newcommand{\U}{\mathfrak{U}}
\renewcommand{\u}{\mathfrak{u}}
\newcommand{\Umax}{U_\mathrm{max}}
\newcommand{\Uss}{U_\mathrm{ss}}
\newcommand{\ZZ}{\mathbb{Z}}
\newcommand{\Aut}{\mathrm{Aut}}
\newcommand{\Cl}{\mathrm{Cl}}
\newcommand{\Cone}{\mathrm{Cone}}
\renewcommand{\div}{\mathrm{div}}
\newcommand{\GL}{\mathrm{GL}}
\newcommand{\Hom}{\mathrm{Hom}}
\newcommand{\id}{\mathrm{id}}
\newcommand{\Lie}{\mathrm{Lie}}
\newcommand{\Spec}{\mathrm{Spec}}
\newcommand{\WDiv}{\mathrm{WDiv}}
\newcommand{\closure}[1]{\overline{#1}}
\renewcommand{\hat}[1]{\widehat{#1}}
\newcommand{\pairing}[2]{\langle #1, #2 \rangle}
\renewcommand{\succeq}{\succcurlyeq}
\newtheorem{section_numbering_1}{}[section]
\theoremstyle{definition}
\newtheorem{Algorithm}[section_numbering_1]{Algorithm}
\newtheorem{Example}[section_numbering_1]{Example}
\newtheorem{Remark}[section_numbering_1]{Remark}
\newtheorem{definition}[section_numbering_1]{Definition}
\newtheorem{Problem}[section_numbering_1]{Problem}
\theoremstyle{plain}
\newtheorem{Corollary}[section_numbering_1]{Corollary}
\newtheorem{Lemma}[section_numbering_1]{Lemma}
\newtheorem{lemma}[section_numbering_1]{Lemma}
\newtheorem{Proposition}[section_numbering_1]{Proposition}
\newtheorem{Theorem}[section_numbering_1]{Theorem}
\newtheorem*{TheoremA}{Theorem A}
\begin{document}
\sloppy 
\title[Radiant toric varieties and unipotent group actions]{Radiant toric varieties and unipotent group actions}
\author{Ivan Arzhantsev}
\address{HSE University, Faculty of Computer Science, Pokrovsky Boulevard 11, Moscow, 109028 Russia}
\email{arjantsev@hse.ru}
\author{Alexander Perepechko}
\address{Kharkevich Institute for Information Transmission Problems,  19 Bolshoy Karetny per., Moscow, 127994 Russia}
\address{HSE University, Faculty of Computer Science, Pokrovsky Boulevard 11, Moscow, 109028 Russia}
\email{a@perep.ru}
\author{Kirill Shakhmatov}
\address{HSE University, Faculty of Computer Science, Pokrovsky Boulevard 11, Moscow, 109028 Russia}
\email{bagoga@list.ru}
\dedicatory{To Jürgen Hausen on the occasion of his 60th birthday}
\thanks{Supported by the grant RSF-DST 22-41-02019}
\subjclass[2010]{Primary 14J50, 14M25; \ Secondary 14M17, 20G07, 32M12}
\keywords{Toric variety, polyhedral fan, automorphism group, maximal unipotent subgroup, Demazure root, additive action, locally nilpotent derivation}
\begin{abstract}
We consider complete toric varieties $X$ such that a maximal unipotent subgroup~$U$ of the automorphism group $\Aut(X)$ acts on $X$ with an open orbit. It turns out that such varieties can be characterized by several remarkable properties. We study the set of Demazure roots of the corresponding complete fan, describe the structure of a maximal unipotent subgroup $U$ in $\Aut(X)$, and find all regular subgroups in $U$ that act on $X$ with an~open orbit. 
\end{abstract} 
\maketitle


\section*{Introduction}
Let $X$ be a complete toric variety with an acting torus $T$. All varieties and actions are defined over an algebraically closed field $\KK$ of characteristic zero. It is proved in \cite[Proposition~11]{Dem}, see also \cite[Theorem~4.2]{Cox} and \cite[Theorem~4.2.4.1]{ADHL}, that the automorphism group $\Aut(X)$ is a linear algebraic group and $T$ is a maximal torus in $\Aut(X)$. An algebraic subgroup $H$ in $\Aut(X)$ is called \emph{regular} if $H$ is normalized by the torus $T$.

We say that $X$ is \emph{radiant} if a maximal unipotent subgroup $U$ of the automorphism group $\Aut(X)$ acts on $X$ with an open orbit. The aim of this paper is to show that radiant toric varieties have important distinguishing features that make this class of varieties attractive for study.

Let $\GG_a$ and $\GG_m$ stand for the additive and the multiplicative group of the ground field~$\KK$, respectively. The groups $\GG_a$ and $\GG_m$ are commutative one-dimensional linear algebraic groups, and $\GG_a$ is unipotent. Moreover, any commutative unipotent linear algebraic group is isomorphic to the vector group $\GG_a^n=\GG_a\times\ldots\times\GG_a$ ($n$~times). Given an algebraic action $\GG_a\times X\to X$ (resp. $\GG_m\times X\to X$), we call the image of $\GG_a$ (resp. $\GG_m$) in $\Aut(X)$ a \emph{$\GG_a$-subgroup} (resp. \emph{$\GG_m$-subgroup}) in $\Aut(X)$. An \emph{additive action} on a complete algebraic variety $X$ is a faithful algebraic action $\GG_a^n\times X\to X$ with an open orbit. Since the group $\GG_a^n$ is isomorphic to the affine space $\AA^n$ as an algebraic variety, we obtain an open embedding of $\AA^n$ into $X$ such that the action of $\GG_a^n$ on $\AA^n$ by translations extends to an action on $X$. This shows that a complete variety with an additive action is an equivariant completion of the affine space, and vice versa; see~\cite{HT,AZ}. 

If $X$ is toric we say that an additive action $\GG_a^n\times X\to X$ is \emph{normalized} if the image of the group $\GG_a^n$ in $\Aut(X)$ is a regular subgroup. Let us say that a regular 
$\GG_a$-subgroup in $\Aut(X)$ is a \emph{root} subgroup. It is easy to see that a normalized additive action is an additive action given by $n$ pairwise commuting root subgroups in $\Aut(X)$.

It is well known that any complete toric variety $X$ corresponds to a complete fan $\Sigma=\Sigma_X$ in the lattice $N$ of one-parameter subgroups of the acting torus $T$. Let 
$\rho_1,\ldots,\rho_m$ be the rays of the fan $\Sigma$. We say that the fan $\Sigma$ is \emph{bilateral} if there is a basis $p_1,\ldots,p_n$ of the lattice $N$ such that, up to renumbering,
the rays $\rho_1,\ldots,\rho_n$ are generated by the vectors $p_1,\ldots,p_n$, respectively, and the remaining rays $\rho_{n+1},\ldots,\rho_m$ lie in the negative orthant with respect to this basis.

The following result is proved in \cite[Theorem~4.1 and Corollary~1]{AR}.

\begin{TheoremA}
Let $X$ be a complete toric variety. The following conditions are equivalent:
\begin{enumerate}
\item[(i)]
the variety $X$ is radiant;
\item[(ii)]
the variety $X$ admits an additive action;
\item[(iii)]
the variety $X$ admits a normalized additive action;
\item[(iv)]
the fan $\Sigma_X$ is bilateral.
\end{enumerate}
\end{TheoremA}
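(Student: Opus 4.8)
The plan is to establish the cycle $(\mathrm{iv})\Rightarrow(\mathrm{iii})\Rightarrow(\mathrm{ii})\Rightarrow(\mathrm{i})\Rightarrow(\mathrm{iv})$. Write $v_1,\ldots,v_m$ for the primitive generators of the rays $\rho_1,\ldots,\rho_m$ and let $M=\Hom(N,\ZZ)$ be the dual lattice. I will use Demazure's description of $G:=\Aut(X)^\circ$: it is generated by the torus $T$ and the root subgroups $U_e\cong\GG_a$, one for every Demazure root $e\in M$, where $e$ has distinguished ray $\rho_i$ when $\pairing{e}{v_i}=-1$ and $\pairing{e}{v_j}\ge0$ for $j\ne i$. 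Each $U_e$ is normalized by $T$, and on the open torus the corresponding locally nilpotent derivation is $\partial_e=\chi^e D_{v_i}$ with $D_v(\chi^u)=\pairing{u}{v}\chi^u$. A maximal unipotent subgroup $U$ is generated by the $U_e$ with $e$ in a set $S^+$ of positive Demazure roots.

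Two implications are immediate. There is nothing to prove in $(\mathrm{iii})\Rightarrow(\mathrm{ii})$. For $(\mathrm{ii})\Rightarrow(\mathrm{i})$, the image $H$ of $\GG_a^n$ is a connected unipotent subgroup with an open orbit; any maximal unipotent subgroup $U'\supseteq H$ then has an orbit containing the open $H$-orbit, hence an open orbit, so $X$ is radiant.

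For $(\mathrm{iv})\Rightarrow(\mathrm{iii})$ I would argue constructively. Given a bilateral basis $p_1,\ldots,p_n$ with dual basis $p_1^*,\ldots,p_n^*$, set $e_i=-p_i^*$. Then $\pairing{e_i}{p_i}=-1$ and $\pairing{e_i}{p_j}=0$ for $j\le n$, $j\ne i$, while $\pairing{e_i}{v_k}\ge0$ for $k>n$ since each such $v_k$ lies in the negative orthant; thus every $e_i$ is a Demazure root with distinguished ray $\rho_i$. The subgroups $U_{e_i}$ pairwise commute, because $e_i+e_j$ pairs to $-1$ with the two distinct generators $p_i$ and $p_j$ and so is not a Demazure root; they therefore define a morphism $\GG_a^n\to G$. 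On the smooth chart $U_\sigma$ of the cone $\sigma=\mathrm{cone}(p_1,\ldots,p_n)$, with coordinates $x_j=\chi^{p_j^*}$, one checks that $\partial_{e_i}=\chi^{e_i}D_{p_i}$ satisfies $\partial_{e_i}(x_j)=\delta_{ij}$, so the $U_{e_i}$ act as the coordinate translations on $U_\sigma\cong\AA^n$. Hence $\GG_a^n$ acts faithfully with an open orbit, i.e.\ as a normalized additive action.

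The heart of the proof is $(\mathrm{i})\Rightarrow(\mathrm{iv})$. Let $U$ be a maximal unipotent subgroup with open orbit $O$. Since $T$ normalizes $U$, the unique open $U$-orbit $O$ is $T$-stable, and as $O$ meets the dense torus it contains $T$; being an orbit of a unipotent group in characteristic zero, $O\cong\AA^n$. A $T$-stable affine open subset of $X$ isomorphic to affine space must be the chart $U_\sigma$ of a smooth maximal cone, so $\sigma=\mathrm{cone}(p_1,\ldots,p_n)$ for some $\ZZ$-basis $p_1,\ldots,p_n$ of $N$ generating rays of $\Sigma$; this already supplies the basis required for bilaterality, and identifying $O=U_\sigma\cong\AA^n$ with coordinates $x_j=\chi^{p_j^*}$ is the crux of the argument. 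It remains to locate the other rays. At the torus-fixed point $0\in U_\sigma$ the orbit of $U$ is open, so the values $\partial_e|_0$ with $e\in S^+$ span the tangent space $T_0U_\sigma$. A short monomial computation shows $\partial_e|_0\ne0$ precisely when $e=-p_j^*$ for some $j$, in which case $\partial_e|_0=\partial/\partial x_j|_0$; surjectivity therefore forces $-p_1^*,\ldots,-p_n^*\in S^+$. Each $-p_j^*$ being a Demazure root gives $\pairing{-p_j^*}{v_k}\ge0$, i.e.\ the $j$-th coordinate of $v_k$ is non-positive, for every ray $\rho_k\ne\rho_j$; ranging over $j$ places every remaining generator $v_k$ with $k>n$ in the negative orthant, so $\Sigma$ is bilateral. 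The main obstacle is this passage through the open $U$-orbit: justifying rigorously that it is exactly a smooth affine chart $\cong\AA^n$ and extracting the translations $-p_j^*$ from the tangent space at the fixed point.
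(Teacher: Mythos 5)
Your proof is correct and is essentially the argument behind the paper's Theorem A, which the paper does not prove itself but cites from \cite{AR}: embed $\GG_a^n$ into a maximal unipotent subgroup normalized by $T$, identify its open orbit with a smooth $T$-stable affine chart $U_\sigma\cong\AA^n$, read off the basic roots $-p_1^*,\dots,-p_n^*$ from the tangent space at the torus-fixed point, and conversely build the normalized action from these commuting root subgroups. The only step you use without justification is that in (iv)$\Rightarrow$(iii) the cone $\sigma=\mathrm{cone}(p_1,\dots,p_n)$ actually lies in $\Sigma$, so that the chart $U_\sigma$ exists; this is true but needs completeness: a cone of $\Sigma$ containing a strictly positive vector must have every $p_j$ among its ray generators (any omitted coordinate would be nonpositive), and strong convexity then excludes any additional ray from the negative orthant, so that cone is exactly $\sigma$.
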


In particular, radiant toric varieties are precisely toric equivariant completions of affine spaces, cf.~\cite{AZ}.

One more characterization of radiant projective toric varieties was proposed by Fu and Hwang~\cite{FH}. Let $X\subseteq\PP(V)$ be a closed subvariety. For a nonsingular point $x\in X$, a $\GG_m$-action on $X$ coming from a $\GG_m$-subgroup of $\GL(V)$ is said to be of \emph{Euler type} at~$x$ if $x$ is an isolated fixed point of the restricted $\GG_m$-action on $X$ and the induced $\GG_m$-action on the tangent space $T_xX$ is by scalar multiplication. The subvariety $X\subseteq\PP(V)$ is \emph{Euler-symmetric} if for a general point $x\in X$, there exists a $\GG_m$-action on $X$ of Euler type at $x$. It is proved in~\cite[Theorem~3.7(i)]{FH} that  every Euler-symmetric variety admits an additive action. If $X$ is a projective toric variety, it is shown in~\cite[Theorem~3]{Sh} that $X$ admits an additive action if and only if $X$ is Euler-symmetric with respect to some embedding into a projective space. Moreover, in this case $X$ is Euler-symmetric with respect to any linearly non-degenerate linearly normal embedding into a projective space.

\smallskip

Lattice polytopes that correspond to projective radiant toric varieties are described in~\cite[Theorem~5.2]{AR}.
Examples of non-projective radiant toric varieties are given in~\cite{Sha}; see also \mbox{Example~\ref{exsha}} below. 

\smallskip

It is shown in \cite[Theorem~3.6]{AR} that a normalized additive action on a radiant toric variety is unique up to isomorphism. Moreover, it follows from~\cite[Corollary~4]{D2} that any additive action on a radiant toric variety is isomorphic to a normalized additive action if and only if the maximal unipotent subgroup $U$ in $\Aut(X)$ is commutative. In this case $U$ is the only candidate up to conjugation for a subgroup in $\Aut(X)$ defining an additive action on $X$.

\smallskip

Let us describe the content of the paper. In Section~\ref{sec1} we discuss basic facts on toric varieties and Cox rings and recall a description of the automorphism group of a complete toric variety due to Demazure. Section~\ref{sec2} is devoted to the study of Demazure roots of a bilateral fan. Unlike classical root systems, the set of Demazure roots of a complete polyhedral fan may seem rather random and unsymmetric. However, a more detailed analysis shows that in the case of a bilateral fan, the set of Demazure roots has many nice properties. The study of such properties was started by Dzhunusov~\cite{D1, D2}. We develop these results and find new features of Demazure roots. With any bilateral fan $\Sigma$ we associate its ray matrix $A$. We show that Demazure roots of $\Sigma$ admit an explicit description in terms of columns of the matrix $A$. This allows both to study properties of Demazure roots and to construct bilateral fans with prescribed Demazure roots.

In Section~\ref{sec3}, using results on Demazure roots, we give a description of a maximal unipotent subgroup of the automorphism group $\Aut(X)$ of a radiant toric variety $X$ as a semidirect product of some explicitly presented unipotent groups. The next step is to find all regular unipotent subgroups in $\Aut(X)$ which act on $X$ with an open orbit. This is done in Section~\ref{sec4}. We define a \emph{principal unipotent subgroup} of $\Aut(X)$ as a regular commutative unipotent subgroup that acts on $X$ with an open orbit. It is proved that a regular unipotent subgroup $U$ in $\Aut(X)$ acts on $X$ with an open orbit if and only if $U$ contains a principal unipotent subgroup. We come to consider two types of radiant toric varieties: the ones that admit a faithful action of a non-commutative unipotent group and the ones that do not. Specific properties of radiant toric varieties of each type are studied.  

In Section~\ref{sec5} we compute the center of a regular unipotent subgroup $U$ acting on $X$ with an open orbit. With any such $U$ we associate a directed graph $\Gamma$ and describe the central series and the derived subgroups of $U$ in terms of $\Gamma$. These results allow to find the nilpotency class and the derived length of a regular unipotent subgroup. 

It is shown in Section~\ref{sec6} that the open orbit of a non-commutative regular unipotent subgroup has smaller dimension than the subgroup itself. This imposes certain restrictions on equivariant toric completions of unipotent groups. Finally, in Section~\ref{sec7} we describe smooth radiant toric surfaces. It turns out that the Picard number of a smooth non-radiant toric surface is at least~4. We expect further classification results on smooth radiant toric varieties of small dimensions.

\smallskip

The authors thank the referee for a careful reading and helpful comments.


\section{Preliminaries}
\label{sec1} 

In this section we recall basic facts on toric varieties. Hereinafter we fix an algebraically closed field $\KK$ of characteristic zero. By the word `variety' we mean a separated algebraic variety over $\KK$.

\subsection{Toric varieties and Cox rings} \label{tvcr}

Let $T = (\KK^{\times})^n$ be an algebraic torus of rank $n$. A~normal irreducible variety $X$ is called \emph{toric}, if there is a faithful action of $T$ on $X$ with an open orbit. There is a well-known combinatorial description of toric varieties. We briefly recall it in this subsection and refer to standard textbooks \cite{CLS, F} on toric varieties for details.

Let $N \cong \ZZ^n$ be the lattice of one-parameter subgroups of $T$ and $M = \Hom_\ZZ(N, \ZZ) \cong \ZZ^n$ be the character lattice of $T$. Denote by $N_{\QQ}$ and $M_{\QQ}$ the associated $\QQ$-vector spaces $N \otimes_\ZZ \QQ$ and $M \otimes_\ZZ \QQ$. Let $\pairing{\cdot}{\cdot} : M_\QQ \times N_\QQ \to \QQ$ be the pairing of dual vector spaces $N_\QQ$ and $M_\QQ$. 

A \emph{cone} in $N$ is a convex polyhedral cone in $N_{\QQ}$. A cone is called \emph{strongly convex} if it contains no nonzero linear subspace. Recall that the \textit{dual cone} $\sigma^\vee$ to a cone $\sigma$ in $N$ is defined by
$$
\sigma^\vee = \{ u \in M_\QQ \ | \ \pairing{u}{v} \geq 0 \ \forall v \in \sigma \}.
$$
Given a strongly convex cone $\sigma$ in $N$ one can consider the finitely generated $\KK$-algebra $\KK[\sigma^\vee \cap M]$ graded by the semigroup of lattice points of the cone $\sigma^\vee$:
$$
\KK[\sigma^\vee \cap M] = \bigoplus_{u \in \sigma^\vee \cap M} \KK \chi^u,
$$
where $\chi^u \cdot \chi^{u'} = \chi^{u + u'}$. Denote by $X(\sigma)$ the corresponding affine variety $\Spec(\KK[\sigma^\vee \cap M])$. It is well-known that given an affine variety $X$ there is a bijection between faithful $T$-actions on $X$ and effective $M$-gradings on $\KK[X]$. The aforementioned $\sigma^\vee \cap M$-grading on $\KK[\sigma^\vee \cap M]$ corresponds to a faithful $T$-action on $X(\sigma)$ with an open orbit. Therefore, $X(\sigma)$ is a toric variety. Moreover, every affine toric variety $X$ arises in this way.

A \textit{fan} in $N$ is a finite collection $\Sigma$ of strongly convex cones in $N$ such that for all $\sigma_1, \sigma_2 \in \Sigma$ every face of $\sigma_1$ is an element of $\Sigma$ and
the intersection $\sigma_1 \cap \sigma_2$ is a face of both $\sigma_1$ and $\sigma_2$. There is a one-to-one correspondence between toric varieties with an acting torus $T$ and fans in~$N$. Namely, given a fan $\Sigma$ in $N$ the corresponding toric variety $X(\Sigma)$ is a union of affine charts $X(\sigma), \sigma \in \Sigma$, where any two charts $X(\sigma_1)$ and $X(\sigma_2)$ are glued along their open subset $X(\sigma_1 \cap \sigma_2)$. Every toric variety arises in this way.

Denote by $|\Sigma|$ the \emph{support} of a fan $\Sigma$ in $N$:
$$
|\Sigma| = \bigcup_{\sigma \in \Sigma} \sigma.
$$
A fan $\Sigma$ in $N$ is called \textit{complete} if $|\Sigma| = N_\QQ$. The corresponding toric variety $X(\Sigma)$ is complete if and only if the fan $\Sigma$ is complete.

\medskip

Let us recall the basic ingredients of the Cox construction; see \cite[Chapter 1]{ADHL} for more details. Let $X$ be a normal variety with free finitely generated divisor class group $\Cl(X)$ and only constant invertible regular functions. Let $\WDiv(X)$ be the group of Weil divisors on $X$ and $K \subseteq \WDiv(X)$ be a subgroup which maps onto $\Cl(X)$ isomorphically. The \emph{Cox ring} of the variety $X$ is defined as a $K$-graded algebra
$$
R(X) = \bigoplus_{D \in K} H^0(X, D),
$$
where $H^0(X, D) = \{ f \in \KK(X)^\times \ | \ \div(f) + D \geq 0 \} \cup \{ 0 \}$, and where the multiplication on homogeneous components coincides with multiplication in $\KK(X)$ and extends to $R(X)$ by linearity. Up to isomorphism the graded ring $R(X)$ does not depend on the choice of a subgroup $K$. This construction may also be generalized to the case when $\Cl(X)$ is a finitely generated group with torsion.

Consider a fan $\Sigma$ in $N$ and the corresponding toric variety $X = X(\Sigma)$. One-dimensional cones in $\Sigma$ are called \textit{rays}. We denote by $\Sigma(1) = \{ \rho_1, \dots, \rho_m \}$ the set of rays in $\Sigma$. Similarly, we denote by $\sigma(1)$ the set of rays of a cone $\sigma$. Let us denote by $p_l$ the generator of the semigroup $\rho_l \cap N$. Assume that $X$ has only constant invertible regular functions. This is equivalent to the condition $\langle p_1, \dots, p_m \rangle_{\QQ} = N_{\QQ}$.

There is a one-to-one correspondence between cones $\sigma \in \Sigma$ and $T$-orbits $O(\sigma)$ in $X$ such that $\dim O(\sigma) = n - \dim \langle \sigma \rangle$. In particular, each ray $\rho_l \in \Sigma(1)$ corresponds to a prime $T$-invariant Weil divisor $D_l = \closure{O(\rho_l)}$ on $X$. The divisor class group $\Cl(X)$ of $X$ is generated by classes $[D_1], \dots, [D_m]$. Moreover, there is a short exact sequence

\begin{equation}\label{eq:M-Cl}
\begin{tikzcd}
0 \arrow[r] & M \arrow[r] & \ZZ^m \arrow[r] & \Cl(X) \arrow[r] & 0,  
\end{tikzcd}
\end{equation}
where the map $M \to \ZZ^m$ is given by $u \mapsto (\pairing{u}{p_1}, \dots, \pairing{u}{p_m})$ and the map $\ZZ^m \to \Cl(X)$ is given by $(c_1, \dots, c_m) \mapsto c_1 [D_1] + \dots + c_m [D_m]$. In~\cite{Cox} it is proved that the Cox ring $R(X)$ is a $\Cl(X)$-graded polynomial algebra $\KK[x_1, \dots, x_m]$ with $\deg(x_l) = [D_l]$ for each $1\le l\le m$.

One can easily characterize radiant toric varieties in terms of Cox rings. Namely, a complete toric variety $X$ is radiant if and only if the group $\Cl(X)$ is free and there is a positive integer $n < m$ such that we can reorder the variables in $\KK[x_1, \dots, x_m]$ in such a way that $\deg(x_{n + 1}),\ldots,\deg(x_m)$ form a basis in $\Cl(X)$ and for $1\le j\le n$ we have $\deg(x_j)=\sum_{k = n + 1}^m a_{k j} \deg(x_k)$ for some non-negative integers $a_{k j}$; cf.~\cite[Corollary~3]{D2}.

\smallskip

The $\Cl(X)$-grading on $R(X)$ gives rise to an action of the quasitorus $G = \Hom_{\ZZ}(\Cl(X), \KK^{\times})$ on $\KK^m = \Spec(R(X))$. Note that if we apply the $\Hom_\ZZ(-, \KK^\times)$-functor to the short exact sequence~\eqref{eq:M-Cl}, we obtain an inclusion $G \hookrightarrow (\KK^{\times})^m$. 

Denote by $Z$ the closed subset of $\KK^m$ defined by equations
$$
\prod_{\rho_l \not\in \sigma(1)} x_l = 0 \quad \forall \sigma \in \Sigma.
$$
These equations generate the so-called {\it irrelevant ideal} in $R(X)$; see \cite[Section~1.6.3]{ADHL} for more details. The subset $Z$ is invariant under the action of the group $G$, thus there is an action of $G$ on the variety $\hat{X} = \KK^m \setminus Z$. The variety $X$ is isomorphic to the good categorical quotient $\hat{X} /\!/ G$ and the isomorphism $X \cong \hat{X} /\!/ G$ is compatible with torus: $X \supset T \cong (\KK^{\times})^m /\!/ G = (\KK^{\times})^m / G$.


\subsection{Demazure roots and root subgroups} \label{rss}

Hereinafter we fix a complete fan $\Sigma$ in $N$ with rays $\rho_1, \dots, \rho_m$ and their primitive generators $p_1, \dots, p_m$ and let $X = X(\Sigma)$, $R = R(X)=\KK[x_1,\ldots,x_m]$. Denote by $\R_i \subseteq M, \ 1\le i\le m$ the set of vectors $e \in M$ such that
$$
\pairing{e}{p_i} = -1 \quad \text{and} \quad \pairing{e}{p_s} \geq 0 \ \text{for all} \ s \neq i.
$$
The elements of the set $\R = \bigsqcup_{i=1}^m \R_i$ are called \textit{Demazure roots} of $\Sigma$; cf. \cite[D\'efinition 4]{Dem}. The roots in $\S = \R \cap -\R$ are called \emph{semisimple}, and the ones in $\U = \R \setminus \S$ are called \emph{unipotent}.

\smallskip

Let $e \in \R_i$. Denote by $\partial_e$ the locally nilpotent derivation on $R$:
$$
\partial_e = \left(\prod_{s \neq i} x_s^{\pairing{e}{p_s}} \right) \frac{\partial}{\partial x_i} = x^{\theta (e)} \frac{\partial}{\partial x_i},
$$
where the map $\theta : \R_i \to \ZZ^m$ is defined by
$$
\theta (e) = \big( \pairing{e}{p_1}, \dots, \pairing{e}{p_{i-1}}, 0, \pairing{e}{p_{i+1}}, \dots, \pairing{e}{p_m} \big).
$$
The action of operators $\exp (\alpha \partial_e), \alpha \in \KK$ on $R$ gives rise to an action of the group $U_e \cong \GG_a$ on $\hat{X}$ that commutes with the action of $G$. This induces a $U_e$-action on $\hat{X} /\!/ G = X$ so $U_e \subseteq \Aut (X)$. The subgroups $U_e, \ e \in \R$ are exactly the root subgroups of $\Aut (X)$. Indeed, let $e \in \R$. It is easy to see that as a subgroup of $\Aut (\hat{X})$ the group $U_e$ is normalized by $(\KK^{\times})^m$, so $U_e \subseteq \Aut (X)$ is normalized by the acting torus $T$. Conversely, it is known that every $\GG_a$-subgroup of $\Aut (X)$ which is normalized by $T$ has the form $U_e$ for some $e \in \R$; see \cite[Th\'eor\`eme 3]{Dem} and \cite[Proposition 3.14]{Oda}.

In the case of a radiant toric variety $X$ there is a normalized additive action on $X$ given by the tuple of pairwise commuting homogeneous locally nilpotent derivations $\partial_1, \ldots, \partial_n$ on the ring $R(X) = \KK[x_1, \dots, x_m]$, where
$$
\partial_j = x_{n + 1}^{a_{n + 1 j}} \ldots x_{m}^{a_{m j}} \frac{\partial}{\partial x_j};
$$
see~\cite[Lemma~3.5]{AR}. 

\subsection{The automorphism group of a complete toric variety}

In addition to the quotient realization, the Cox ring $R$ also contains information about the automorphism group of $X$. Let $\Aut_g(R)$ be the group of graded automorphisms of the algebra $R$. If $\phi \in \Aut_g(R)$, then $\phi$ commutes with the action of the group $G$ and thus $\phi$ induces an automorphism of $X$. In fact, there is an isomorphism $\Aut^0(X) \cong \Aut_g(R) / G$. Moreover, $\Aut(X)$ is a linear algebraic group generated by the acting torus $T$, the root subgroups $U_e, e \in \R$ and a finite group; see \cite[Section 4]{Cox}.

We recall basic facts on the structure of the group $\Aut(X)$ proved by Demazure~\cite{Dem}; see also \cite[Section~3.4]{Oda}. The torus $T$ is a maximal torus in $\Aut(X)$ and $\R$ is a root system of $\Aut(X)$ with respect to $T$. The unipotent radical $G_u$ of $\Aut^0(X)$ is generated by the subgroups $U_e, e \in \U$, while there exists a connected reductive subgroup $G_{\text{red}}$ with $T$ as a maximal torus such that $\S$ is the root system for $G_{\text{red}}$ and $\Aut^0(X) = G_u \rtimes G_{\text{red}}$. It follows that if we pick a system of positive roots $\S^+$ with $\S = \S^+ \sqcup \S^-$, then the subgroup generated by $G_u$ and all subgroups $U_e, e \in \S^+$ is a maximal unipotent subgroup in $\Aut(X)$.

\smallskip

By~\cite{Br}, for any connected (not necessarily linear) algebraic group $F$ of dimension $n$ there exists a smooth projective variety $X$ of dimension $2 n$ such that the connected component of the group $\Aut(X)$ is isomorphic to $F$. At the same time, the class of connected linear algebraic groups that can be realized as the connected component of the group $\Aut(X)$ of a complete toric variety $X$ is quite constrained. In particular, it is proved in~\cite[Proposition~3.3]{Dem} that the semisimple part of such a group is a group of type $A$. One of the aims of this work is to study further specific properties of the group $\Aut(X)$ in the case of a radiant toric variety~$X$.


\subsection{Regular subgroups}
\label{sec1.4}

Recall that an algebraic subgroup $H \subseteq \Aut(X)$ is called regular if it is normalized by the acting torus $T$. In particular, root subgroups in $\Aut(X)$ are precisely regular $\GG_a$-subgroups.

Note that a regular unipotent subgroup $U$ is defined by the root subgroups which are contained in $U$. Denote by $\R(U)$ the set of all roots $e \in \R$ such that $U_e \subseteq U$. We call elements of the set $\R(U)$ the \emph{roots} of the group $U$. Conversely, given a subset $\M \subseteq \R$ such that the subgroups $U_e, e \in \M$ generate a unipotent subgroup, we denote this subgroup by~$U(\M)$. Clearly, if $U$ is a regular unipotent subgroup, then $U(\R(U)) = U$. If $\M = \R(U(\M))$, then we call such a set of roots \emph{saturated}, see Definition~\ref{def:Sat}.

On the other hand, one might have $\R(U(\M)) \supsetneq \M$ for a subset $\M \subseteq \R$. This can happen due to the fact that if $e, e' \in \R$ and $e + e' \in \R$, then $\langle U_e, U_{e'} \rangle \supset U_{e + e'}$.

\smallskip

Let $1 \le i \le m$, $e \in \R_i$, and $\alpha \in \KK$. For $\alpha \in \KK$ denote
$$
u_e(\alpha) = \exp(\alpha \partial_e).
$$
Each $u_e(\alpha)$ is an automorphism of the $\KK$-algebra $R$, and the action of $U_e$ on $\KK^m = \Spec(R)$ is given by
$$
u_e(\alpha) \cdot P = (u_e(\alpha) \cdot x_1(P), \dots, u_e(\alpha) \cdot x_m(P)),
$$
where $P = (c_1, \dots, c_m)$ is a point in $\KK^m$ and $u_e(\alpha) \cdot x_i \in R$ is considered as a function on $\KK^m$ for each $1 \le i \le m$. Since
$$
u_e(\alpha) x_i = x_i + \alpha x^{\theta(e)} \text{ and } u_e(\alpha) x_j = x_j \text{ for } 1 \le j \ne i \le m,
$$
we see that $u_e(\alpha)$ changes only the $i$th coordinate.

Consider another pair $e' \in \R_j$ and $\alpha' \in \KK$. Denote $d = \pairing{e}{p_j}$. Then
$$
u_{e'}(\alpha') u_e(\alpha) x_k = x_k \text{ for all } k \ne i, j.
$$
If $i = j$, then
$$
u_{e'}(\alpha') u_e(\alpha) x_i =
u_{e'}(\alpha')(x_i + \alpha x^{\theta(e)}) =
x_i + \alpha' x^{\theta(e')} + \alpha x^{\theta(e)}.
$$
If $i \ne j$, then
$$
u_{e'}(\alpha') u_e(\alpha) x_j = u_{e'}(\alpha') x_j = x_j + \alpha' x^{\theta(e')} \text { and}
$$

\medskip

$$
u_{e'}(\alpha') u_e(\alpha) x_i =
u_{e'}(\alpha')(x_i + \alpha x^{\theta(e)}) =
u_{e'}(\alpha')\left(x_i + \alpha x_j^d \frac{x^{\theta(e)}}{x_j^d}\right) =
$$
$$
= x_i + \alpha (u_{e'}(\alpha') x_j)^d \frac{x^{\theta(e)}}{x_j^d} =
x_i + \sum_{l = 0}^d \binom{d}{l} x_j^{d - l} \alpha (\alpha')^l x^{l \theta(e')} \frac{x^{\theta(e)}}{x_j^d} =
$$
$$
= x_i + \sum_{l = 0}^d \binom{d}{l} \alpha (\alpha')^l \frac{x^{\theta(e) + l \theta(e')}}{x_j^l}.
$$

\begin{Example}
Let $\Sigma$ be a fan of projective plane $\PP^2$ so that $n=2$, $m = 3$ and ${p_1 = (1, 0)}$, ${p_2 = (0, 1)}$, $p_3 = (-1, -1)$. Take the dual basis in $M$ and consider two roots $\M = \{ (-1, 1), (0, -1) \}$. As $(3 \times 3)$-matrices acting on homogeneous coordinates $[z_0 : z_1 : z_2]$, the elements of the subgroups $U_{(-1, 1)}$ and $U_{(0, -1)}$ are
$$
u_{(-1, 1)}(\alpha) =
\begin{pmatrix}
1 & \alpha & 0 \\
0 & 1 & 0 \\
0 & 0 & 1
\end{pmatrix}
\text{ and } \
u_{(0, -1)}(\beta) =
\begin{pmatrix}
1 & 0 & 0 \\
0 & 1 & \beta \\
0 & 0 & 1
\end{pmatrix},
$$
respectively. Clearly, these root subgroups generate a unipotent subgroup $U(\M)$. However, one can check that
$$
u_{(-1, 1)}(\alpha) u_{(0, -1)}(1) u_{(-1, 1)}(-\alpha) u_{(0, -1)}(-1) =
\begin{pmatrix}
1 & 0 & \alpha \\
0 & 1 & 0 \\
0 & 0 & 1
\end{pmatrix},
$$
which is an element of the subgroup $U_{(-1, 0)}$. It shows that
$$
\R(U(\M)) = \M \cup \{ (-1, 0) \} \supsetneq \M.
$$
\end{Example}

Let $U$ be a regular unipotent subgroup in $\Aut(X)$ with $\R(U) = \{e_1, \ldots, e_k\}$, and $U_{e_1}, \ldots, U_{e_k}$ be all root subgroups in $U$. Then the multiplication map
$$
U_{e_1} \times \ldots \times U_{e_k} \to U, \quad (u_1, \ldots, u_k) \mapsto u_1 \ldots u_k
$$
is an isomorphism of affine varieties. This fact is proved in~\cite[Proposition~28.1]{Hum} for regular unipotent subgroups of a reductive group, but the proof passes without changes in the case we are interested in.


\section{Demazure roots of bilateral fans}
\label{sec2}

In this section we continue the study of Demazure roots (or simply roots) of bilateral fans started in~\cite{AR, D1, D2}. Let $N$ be a lattice of rank $n$ and $\Sigma$ be a complete fan in $N$ which is bilateral. Recall that this means that we can order the rays $\rho_1, \ldots, \rho_m$ of $\Sigma$ so that the primitive vectors $p_1, \dots, p_n$ form a basis of $N$ and for any $n+1\le k\le m$ we have
\begin{equation}
\label{bab}
p_k = -\sum_{j = 1}^n a_{k j} p_j, \text{ where } a_{k j} \in \ZZ_{\geq 0}.
\end{equation}

Let $q_1, \dots, q_n$ be the basis of the lattice $M = \Hom(N, \ZZ)$ dual to $p_1, \dots, p_n$. One can easily see that every root $e \in \R_i$, $1\le i\le n$ has the form
$$
e = -q_i + \sum_{j \neq i} b_j q_j, \text{ where } b_j = \pairing{e}{p_j} \in \ZZ_{\geq 0}.
$$
Moreover, we have $-q_i \in \R_i$ for every $1\le i\le n$.

A root $e \in \R$ is called \emph{basic} if $e = -q_i$ for some $1\le i\le n$. We also call a root of the form $-q_i + q_j$ \emph{elementary}, where $1\le i, j\le n$ and $i \ne j$ . All other roots in $\bigsqcup_{i = 1}^n \R_i$ are called \emph{special}. Finally, roots in $\bigsqcup_{k = n + 1}^m \R_k$ are called \emph{detached}.

\smallskip

Note that if $e \in \R_i \cap \S$ for some $1\le i\le n$, then $e$ is either basic or elementary. In \cite[Proposition~2]{D1} it is proved that $\R_k \subseteq \{q_1, \dots, q_n\}$ for all $n+1\le k\le m$. So, the set $\R$ is composed of

\begin{itemize}
\item basic roots in $\bigsqcup_{i = 1}^n \R_i$;
\item elementary roots in  $\bigsqcup_{i = 1}^n \R_i$;
\item special roots in $\bigsqcup_{i = 1}^n \R_i$, they are unipotent;
\item detached roots in $\bigsqcup_{k = n + 1}^m \R_k$, they are semisimple. 
\end{itemize}

\begin{Lemma} \label{lm:RSu} 
Consider roots $e \in \R_i$ and $e' \in \R_j$ for some $1\le i, j\le n$, $i \ne j$ such that $\pairing{e'}{p_i} = 0$. Denote $d = \pairing{e}{p_j}$.
\begin{enumerate}
\item
If $d > 0$, then $e + e' \in \R_i$ and $[\partial_e, \partial_{e'}] = -d \partial_{e + e'}$.
\item
If $d = 0$, then $[\partial_e, \partial_{e'}] = 0$.
\end{enumerate}
\end{Lemma}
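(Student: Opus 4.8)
The plan is to compute the commutator of the two locally nilpotent derivations directly and then read off the result from the exponent bookkeeping. Writing $f = x^{\theta(e)}$ and $g = x^{\theta(e')}$, so that $\partial_e = f\,\partial/\partial x_i$ and $\partial_{e'} = g\,\partial/\partial x_j$, the second-order terms cancel because mixed partials commute, and one is left with the standard formula
$$
[\partial_e, \partial_{e'}] = f\left(\frac{\partial g}{\partial x_i}\right)\frac{\partial}{\partial x_j} - g\left(\frac{\partial f}{\partial x_j}\right)\frac{\partial}{\partial x_i}.
$$
Everything then reduces to differentiating monomials, so the whole argument is really about tracking the exponent vectors $\theta(e), \theta(e') \in \ZZ^m$.

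First I would dispose of the first summand. By definition $\theta(e')_i = \pairing{e'}{p_i}$, and the hypothesis $\pairing{e'}{p_i}=0$ says precisely that $x_i$ does not occur in $g = x^{\theta(e')}$; hence $\partial g/\partial x_i = 0$ and the first term vanishes identically. For the second term, the exponent of $x_j$ in $f = x^{\theta(e)}$ is $\theta(e)_j = \pairing{e}{p_j} = d$, so $\partial f/\partial x_j = d\, x^{\theta(e)-\varepsilon_j}$, where $\varepsilon_j$ denotes the $j$-th standard basis vector of $\ZZ^m$. Multiplying by $g$ gives
$$
[\partial_e,\partial_{e'}] = -d\, x^{\theta(e)+\theta(e')-\varepsilon_j}\,\frac{\partial}{\partial x_i}.
$$
When $d = 0$ this is already $0$, which is exactly assertion~(2); one may also note that $d=0$ means $x_j$ does not occur in $f$, so the vanishing is consistent with $\partial f/\partial x_j = 0$.

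It remains, for part~(1), to identify the monomial coefficient with $\partial_{e+e'}$. The crucial step is to check that $e+e' \in \R_i$ when $d > 0$, which I would verify coordinatewise against the defining relations of $\R_i$: at index $i$ we get $\pairing{e+e'}{p_i} = -1 + 0 = -1$; for $s \neq i,j$ both $\pairing{e}{p_s}$ and $\pairing{e'}{p_s}$ are nonnegative; and the only delicate case is $s=j$, where $\pairing{e+e'}{p_j} = d + (-1) = d-1 \geq 0$ exactly because $d > 0$. This is the single place where the hypothesis $d>0$ is used, and it is the heart of the matter. A final coordinatewise comparison then yields $\theta(e)+\theta(e')-\varepsilon_j = \theta(e+e')$, the subtracted $\varepsilon_j$ accounting for the convention that the $j$-th slot of $\theta(e+e')$ records $\pairing{e+e'}{p_j}=d-1$ rather than $d$; this converts the displayed formula into $[\partial_e,\partial_{e'}] = -d\,\partial_{e+e'}$, as claimed. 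The computation is entirely elementary, and the only genuine obstacle is keeping track of where each $\theta$ places its forced zero entry (slot $i$ for $\theta(e)$ and $\theta(e+e')$, slot $j$ for $\theta(e')$) while comparing the three exponent vectors.
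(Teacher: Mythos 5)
Your proof is correct and takes essentially the same approach as the paper's: both verify $e+e'\in\R_i$ by the identical coordinatewise check (with $d>0$ needed only at index $j$), and both reduce the bracket to a direct computation with monomial derivations. Your exponent-vector bookkeeping via $\theta(e)+\theta(e')-\varepsilon_j=\theta(e+e')$ is just a more explicit rendering of the paper's factorization $\partial_e = f x_j^d\,\tfrac{\partial}{\partial x_i}$, $\partial_{e'} = h\,\tfrac{\partial}{\partial x_j}$ with $f,h$ independent of $x_i,x_j$.
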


\begin{proof}
Let us prove that $e + e' \in \R_i$ when $d > 0$. Clearly, the conditions $\pairing{e + e'}{p_i} = -1$ and $\pairing{e + e'}{p_l} \geq 0$ are satisfied for $l \ne i, j$. We have $\pairing{e + e'}{p_j} = \pairing{e}{p_j} + \pairing{e'}{p_j} = d - 1 \ge 0$ so $e + e' \in \R_i$.

Write $\partial_e = f x_j^d \frac{\partial}{\partial x_i}$ and $\partial_{e'} = h \frac{\partial}{\partial x_j}$ for some monomials $f$ and $h$ that do not depend on $x_i$ and $x_j$. Then $[\partial_e, \partial_{e'}] = f h [x_j^d \frac{\partial}{\partial x_i}, \frac{\partial}{\partial x_j}]$. If $d > 0$, then $[\partial_e, \partial_{e'}] = -d f h x_j^{d-1} \frac{\partial}{\partial x_i} = -d \partial_{e + e'}$. Otherwise $d = 0$ and $[\partial_e, \partial_{e'}] = 0$.
\end{proof}

Now let us consider the following partial preorder on the set $C = \{1, \dots, n\}$. We say that $i \succeq j$ if $a_{k i} \ge a_{k j}$ for all $n+1\le k\le m$, where the integers $a_{kj}$ are defined in~(\ref{bab}). Further, we let $i \asymp j$ if $a_{k i} = a_{k j}$ for all $n+1\le k \le m$. The relation $\asymp$ is an equivalence relation on $C$ and the preorder $\succeq$ induces a partial order on the set of equivalence classes.

One can regard the defined preorder as a preorder on the set of columns of a matrix. Namely, we write the coordinates of the vectors $-p_{n + 1}, \dots, -p_m$ as rows of an $(m - n) \times n$ matrix:
$$
A =
\begin{pmatrix}
a_{n + 1 \, 1} & \hdots & a_{n + 1 \, n} \\
\vdots &  & \vdots \\
a_{m 1} & \hdots & a_{m n}
\end{pmatrix}.
$$
We say that $A$ is the \emph{ray matrix} of the bilateral fan $\Sigma$. Let $v_1, \dots, v_n$ be the columns of the matrix $A$. Then $i \succeq j$ if and only if $v_i \geq v_j$ (coordinate-wise), and $i \asymp j$ if and only if $v_i = v_j$.

The rows of $A$ are nonzero pairwise distinct primitive integer vectors with non-negative entries. Since the fan $\Sigma$ is complete, the matrix $A$ contains no zero column: indeed, if there is $i \in C$ such that $a_{k i} = 0$ for all $k > n$, then every cone from $\Sigma$ is contained in the half-space $\{v \in N_{\QQ} \ | \ \pairing{q_i}{v} \ge 0\}$, a contradiction. Conversely, any matrix $A$ satisfying these conditions is the ray matrix of some bilateral fan $\Sigma$.

The ray matrix $A$ does not define the bilateral fan $\Sigma$. But it determines the set of rays $\Sigma(1)$ and thus the set of Demazure roots $\R$. In fact, it is convenient to describe the set of Demazure roots in terms of the matrix $A$. Let us collect the corresponding results in the following proposition.

\begin{Proposition} \label{pr:RMR} 
Let $\Sigma$ be a bilateral fan and $A$ be its ray matrix. Then
\begin{enumerate}
\item
a vector $e = -q_i + \sum_{j \ne i} b_j q_j$, $b_j \in \ZZ_{\ge 0}$ belongs to $\R_i$ if and only if $-A e \ge 0$;
\item
a vector $e \in M$ is a detached root in $\R_k$ if and only if $e = q_i$, where the column $v_i$ is a vector of length $(m - n)$ with a $1$ in the $(k - n)^\text{th}$ coordinate and $0$'s elsewhere.
\end{enumerate}
\end{Proposition}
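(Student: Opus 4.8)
The plan is to translate the defining inequalities of a Demazure root into linear conditions on the matrix $A$ by computing the pairings $\pairing{e}{p_l}$ separately for $l = \overline{1, n}$ and for $l = \overline{n + 1, m}$. Throughout I write a vector $e \in M$ in coordinates $(e_1, \dots, e_n)$ with respect to the dual basis $q_1, \dots, q_n$, so that $\pairing{e}{p_j} = e_j$ for $j = \overline{1, n}$, and I use the relation $p_k = -\sum_{j = 1}^n a_{k j} p_j$ to obtain
$$
\pairing{e}{p_k} = -\sum_{j = 1}^n a_{k j} e_j = -(A e)_{k - n} \qquad (k = \overline{n + 1, m}),
$$
where $(A e)_{k - n}$ denotes the $(k - n)^{\text{th}}$ entry of the column vector $A e$.

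For part~(1), given $e = -q_i + \sum_{j \ne i} b_j q_j$ with $b_j \in \ZZ_{\ge 0}$, the conditions involving $p_1, \dots, p_n$ hold automatically: duality gives $\pairing{e}{p_i} = -1$, and $\pairing{e}{p_l} = b_l \ge 0$ for $l \ne i$, $l = \overline{1, n}$. Since $\pairing{e}{p_i} = -1 < 0$, the vector $e$ can belong to no $\R_l$ other than $\R_i$; hence $e$ is a root if and only if $e \in \R_i$, that is, if and only if $\pairing{e}{p_k} \ge 0$ for all $k = \overline{n + 1, m}$. By the displayed formula this is exactly $(A e)_{k - n} \le 0$ for every $k$, i.e.\ $-A e \ge 0$.

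For part~(2) I would first invoke the cited fact $\R_k \subseteq \{q_1, \dots, q_n\}$, so that every detached root in $\R_k$ has the form $q_i$, and it remains to decide when $q_i \in \R_k$. Here $\pairing{q_i}{p_j} = \delta_{i j} \ge 0$ disposes of the generators $p_1, \dots, p_n$, while for $k' = \overline{n + 1, m}$ the displayed formula gives $\pairing{q_i}{p_{k'}} = -a_{k' i}$. The requirement $\pairing{q_i}{p_k} = -1$ then forces $a_{k i} = 1$, and the requirements $\pairing{q_i}{p_{k'}} \ge 0$ for $k' \ne k$ force $a_{k' i} = 0$, using that $a_{k' i} \ge 0$. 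These conditions say precisely that the column $v_i$ has a $1$ in position $k - n$ and $0$'s elsewhere, which is the asserted description; conversely, reading the same equivalences backwards shows that any such $q_i$ indeed lies in $\R_k$.

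I expect no serious obstacle: both statements reduce to the single pairing computation above, together with the structural input $\R_k \subseteq \{q_1, \dots, q_n\}$ for part~(2), which I take as given from \cite{D1}. The only point demanding a little care is the ``only if'' direction of part~(1), where one must rule out that a vector of the prescribed form could instead be a detached root; this is immediate from $\pairing{e}{p_i} = -1$, which cannot occur for a root in $\R_k$ with $k > n$.
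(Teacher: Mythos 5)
Your proof is correct, and part~(1) follows the paper's argument essentially verbatim: both translate the conditions $\pairing{e}{p_k}\ge 0$, $k=\overline{n+1,m}$, into the vector inequality $-Ae\ge 0$, with the conditions on $p_1,\dots,p_n$ automatic from the form of $e$ (your explicit remark that $\pairing{e}{p_i}=-1$ rules out membership in any $\R_l$ with $l\ne i$ is a point the paper leaves implicit). In part~(2), however, you take a genuinely different route. You import the inclusion $\R_k\subseteq\{q_1,\dots,q_n\}$ from \cite[Proposition~2]{D1} (which the paper indeed states in Section~\ref{sec2}), so that only the easy verification of when $q_i\in\R_k$ remains. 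The paper instead proves part~(2) from scratch: it starts from an arbitrary $e=\sum_{j=1}^n w_j q_j$, observes that $e\in\R_k$ forces all $w_j\ge 0$ and forces $Ae=\sum_j w_j v_j$ to have one coordinate equal to $1$ and the rest non-positive, and then uses the fact that the columns $v_j$ are nonzero with non-negative entries to conclude that exactly one $w_i$ equals $1$, the rest vanish, and $v_i$ is a standard basis vector. Thus the paper's argument is self-contained and re-derives the \cite{D1} inclusion as a byproduct, whereas yours is shorter but makes the statement logically dependent on the cited result; both are legitimate, since the paper itself quotes that result as known.
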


\begin{proof}
\leavevmode
\begin{enumerate}
\item
The equality $\langle e, p_i\rangle = -1$ follows from the definition of a dual basis. Since 
$$
-A e = v_i - \sum_{j \ne i} b_j v_j, 
$$
the condition $-A e \ge 0$ is just the system of conditions $\pairing{e}{p_k} \ge 0$, $k > n$ written in vector form.
\item
Let $e = \sum_{j = 1}^n w_j q_j$. Then $e \in \R_k$ for some $k > n$ if and only if all $w_j = \pairing{e}{p_j}$ are non-negative and the vector $A e = \sum_{j = 1}^n w_j v_j$ has one coordinate 1 and all other coordinates non-positive. Since all vectors $v_j$ are nonzero and have non-negative coordinates, this is the case if and only if all coordinates $w_j$ but one are $0$'s, one coordinate $w_i$ equals~$1$, and the vector $v_i$ also has one coordinate equal to $1$ and others equal to $0$. Then $e = q_i$ and $\pairing{e}{p_k} = -1$ means that $v_i$ has $1$ in the $(k-n)^\text{th}$ coordinate.
\end{enumerate}
\end{proof}

\begin{Example} \label{ex:CEx} 
Let $X = \FF_1 \times \PP^1$, where $\FF_1$ is the Hirzebruch surface. Taking a basis $p_1,p_2,p_3$ in $N$, we let $p_4=-p_1-p_2$, $p_5=-p_1$ and $p_6=-p_3$, and the (bilateral) fan $\Sigma$ of $X$ is the product of the fan of $\FF_1$ generated by $p_1,p_2,p_4,p_5$ and the fan of $\PP^1$ generated by $p_3,p_6$. We have $n=3$ and $m=6$. The ray matrix of $\Sigma$ is 
$$
A =
\begin{pmatrix}
1 & 1 & 0 \\
1 & 0 & 0 \\
0 & 0 & 1
\end{pmatrix}.
$$
Using Proposition~\ref{pr:RMR}, one can find that
$$
\R_1 = \{-q_1, \ -q_1 + q_2\}, \ \R_2 = \{-q_2\}, \ \R_3 = \{-q_3\},
$$
$$
\R_4 = \{q_2\}, \ \R_5 = \emptyset, \ \R_6 = \{q_3\}.
$$

Consider a fan $\Sigma'$ obtained from $\Sigma$ by deforming the ray $\QQ_{\ge 0} \cdot p_5$ into $\QQ_{\ge 0} \cdot p'_5$, where $p'_5 = -p_1 - p_3$. That is, for each cone $\Cone(S)$ generated by a subset $S \subseteq \{p_1, \dots, p_6\}$, we consider a cone $\Cone(S')$, where $S'$ is obtained from $S$ by substituting $p_5$ with $p'_5$. The collection $\Sigma'$ of such cones $\Cone(S')$ is also a bilateral fan. Its ray matrix is equal to
$$
A' =
\begin{pmatrix}
1 & 1 & 0 \\
1 & 0 & 1 \\
0 & 0 & 1
\end{pmatrix}.
$$
Consider a fan $\Sigma''$ obtained from $\Sigma'$ by gluing its cones $\Cone(p_3, p_4, p'_5)$ and $\Cone(p_2, p_3, p'_5)$. In other words, we remove the two aforementioned cones and their intersection $\Cone(p_3, p'_5)$ from the set $\Sigma$, add $\Cone(p_2, p_3, p_4, p'_5)$ to this new set, and denote the resulting set by $\Sigma''$. The set $\Sigma''$ is a bilateral fan and its ray matrix is also equal to $A'$.
\end{Example}

\medskip

The next observation follows from Lemma~\ref{lm:RSu}; it is also proved in~\cite[Lemma~2]{D2}.

\begin{Lemma} \label{lm:er} 
For any $1\le i\le n$ if $\R_i\ne\{-q_i\}$, then $\R_i$ contains an elementary root. Moreover, if $\pairing{e}{p_j} >0$ for some $e\in\R_i$ and $1\le j\le n$, then $-q_i+q_j\in\R_i$. 
\end{Lemma}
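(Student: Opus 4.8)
The plan is to prove the stronger ``moreover'' statement first and then deduce the opening assertion from it. For the reduction, suppose $\R_i \ne \{-q_i\}$ and pick any $e \in \R_i$ with $e \ne -q_i$. Writing $e = -q_i + \sum_{j \ne i} b_j q_j$ with $b_j = \pairing{e}{p_j} \in \ZZ_{\ge 0}$, the inequality $e \ne -q_i$ forces $b_j > 0$ for some $j \in \{1, \dots, n\} \setminus \{i\}$ (the index $j$ is automatically at most $n$, since the expansion only involves $q_1, \dots, q_n$, and $j \ne i$ because $\pairing{e}{p_i} = -1 < 0$). The ``moreover'' part then yields $-q_i + q_j \in \R_i$, which is precisely an elementary root, as required.

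For the core ``moreover'' claim, I would work entirely through the ray-matrix criterion of Proposition~\ref{pr:RMR}(1). Given $e \in \R_i$ with $\pairing{e}{p_j} > 0$, expand $e = -q_i + \sum_{l \ne i} b_l q_l$; the criterion says $-A e = v_i - \sum_{l \ne i} b_l v_l \ge 0$ coordinate-wise, i.e.\ $v_i \ge \sum_{l \ne i} b_l v_l$. The key point is that every column $v_l$ of $A$ has non-negative entries, so each summand $b_l v_l$ is a non-negative vector and may be discarded except for the one indexed by $j$. Since $b_j = \pairing{e}{p_j}$ is a \emph{positive integer}, we have $b_j \ge 1$, whence the coordinate-wise chain
$$
v_i \ \ge\ \sum_{l \ne i} b_l v_l \ \ge\ b_j v_j \ \ge\ v_j .
$$
Thus $-A(-q_i + q_j) = v_i - v_j \ge 0$, and Proposition~\ref{pr:RMR}(1) guarantees $-q_i + q_j \in \R_i$.

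I expect no genuine obstacle here: the argument is a short monotonicity estimate, and the only subtlety worth flagging is the use of integrality. The step $b_j v_j \ge v_j$ fails for $0 < b_j < 1$, so it is essential that $b_j$ is an integer (equivalently, that $e$ is a lattice root) rather than merely a positive rational. Everything else is forced by the non-negativity of the entries of $A$ together with the characterization of roots as those $e$ with $-Ae \ge 0$. As an alternative to invoking Proposition~\ref{pr:RMR}(1), the same inequalities can be read off directly from the defining conditions $\pairing{e}{p_k} \ge 0$ for $k > n$, using $\pairing{q_l}{p_k} = -a_{kl}$; this is essentially the same computation phrased without the matrix.
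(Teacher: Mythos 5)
Your proof is correct, but it follows a genuinely different route from the paper's. The paper gives no computation at all: it deduces the lemma from Lemma~\ref{lm:RSu}, the point being that each basic root $-q_l$ lies in $\R_l$ and satisfies $\pairing{-q_l}{p_i}=0$, so one can apply Lemma~\ref{lm:RSu}(1) to $e\in\R_i$ repeatedly, subtracting $q_l$ one unit at a time ($b_l$ times for each $l\ne i,j$, and $b_j-1$ times for $l=j$), staying inside $\R_i$ at every step until only $-q_i+q_j$ remains. You instead verify the ray-matrix criterion of Proposition~\ref{pr:RMR}(1) in one shot via the coordinate-wise chain $v_i \ge \sum_{l\ne i} b_l v_l \ge b_j v_j \ge v_j$. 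Both arguments rest on the same underlying facts (non-negativity of the entries of $A$, equivalently the conditions $\pairing{e}{p_k}\ge 0$ for $k>n$), and both are legitimate in context since Proposition~\ref{pr:RMR} precedes the lemma in the paper. Your version is more direct and induction-free, and it makes transparent that the hypothesis actually forces $v_i\ge v_j$, i.e.\ $i\succeq j$ --- which is exactly the content of Proposition~\ref{pr:RPO}(1) proved later by the same criterion; the paper's version buys economy by reusing the root-addition mechanism of Lemma~\ref{lm:RSu}, which it needs anyway for the Lie-bracket computations in Sections~\ref{sec3}--\ref{sec5}. Your flag about integrality ($b_j\ge 1$ rather than merely $b_j>0$) is a fair point, though it is automatic since Demazure roots are lattice vectors; and your reduction of the first assertion to the ``moreover'' part is the intended one.
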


We proceed with the following result. 

\begin{Proposition} \label{pr:RPO} 
Take $i, j \in C$ with $i \ne j$. Then
\begin{enumerate}
\item
$i \succeq j$ if and only if $\R_i$ contains the elementary root $-q_i + q_j$;
\item
if $i \asymp j$, then the translation map $\xi : M \to M$ given by $\xi(e) = -q_i + q_j + e$ induces a bijection $\R_j \setminus \{-q_j + q_i\} \to \R_i \setminus \{-q_i + q_j\}$.
\end{enumerate}
\end{Proposition}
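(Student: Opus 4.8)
The plan is to reduce everything to the matrix criterion of Proposition~\ref{pr:RMR}(1), which states that a vector $-q_i + \sum_{l \ne i} b_l q_l$ with $b_l \in \ZZ_{\ge 0}$ lies in $\R_i$ precisely when $-Ae = v_i - \sum_{l \ne i} b_l v_l \ge 0$ coordinatewise. Part~(1) then falls out immediately: applying this criterion to the candidate elementary root $-q_i + q_j$ leaves the single condition $v_i - v_j \ge 0$, which is by definition the relation $i \succeq j$ in terms of columns. Hence $-q_i + q_j \in \R_i \iff v_i \ge v_j \iff i \succeq j$.

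For part~(2) I would first record that $i \asymp j$ means $v_i = v_j$, so that both $i \succeq j$ and $j \succeq i$ hold; by part~(1) the two elementary roots $-q_i + q_j \in \R_i$ and $-q_j + q_i \in \R_j$ therefore exist, and it is legitimate to delete them. The crux of the argument is to describe the two sets intrinsically: I claim $\R_j \setminus \{-q_j + q_i\} = \{ e \in \R_j : \pairing{e}{p_i} = 0 \}$, and symmetrically for $\R_i$. To see this, take $e = -q_j + \sum_{l \ne j} b_l q_l \in \R_j$ and set $b_i = \pairing{e}{p_i}$. The root criterion reads $v_j - \sum_{l \ne j} b_l v_l \ge 0$; substituting $v_j = v_i$ and isolating the $i$-term rewrites this as $(1 - b_i) v_i \ge \sum_{l \ne i, j} b_l v_l \ge 0$. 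Since $v_i$ is a nonzero vector with non-negative entries, evaluating at any coordinate where $v_i$ is positive forces $1 - b_i \ge 0$, i.e. $b_i \le 1$; and if $b_i = 1$ the left-hand side vanishes, so each summand $b_l v_l$ with $l \ne i,j$ is zero, whence $b_l = 0$ (as $v_l \ne 0$) and $e = -q_j + q_i$. This establishes the claimed description.

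It then remains to check that $\xi$ carries one set onto the other. For $e$ in the set above (so $b_i = 0$) a direct computation gives $\xi(e) = -q_i + \sum_{l \ne i, j} b_l q_l$, and its root condition $v_i - \sum_{l \ne i, j} b_l v_l \ge 0$ coincides with the one for $e$ because $v_i = v_j$. Thus $\xi(e) \in \R_i$ with $\pairing{\xi(e)}{p_j} = 0$, i.e. $\xi(e) \in \R_i \setminus \{-q_i + q_j\}$. The translation by $-q_j + q_i$ is the evident inverse, and by the same computation with $i$ and $j$ interchanged it maps $\R_i \setminus \{-q_i + q_j\}$ back into $\R_j \setminus \{-q_j + q_i\}$, so $\xi$ is the asserted bijection.

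I expect the only genuinely non-formal step to be the coordinate bound $\pairing{e}{p_i} \le 1$ together with the rigidity at the value $1$; this is exactly where the hypotheses that $v_i$ is nonzero with non-negative entries (coming from completeness of $\Sigma$) and that $v_i = v_j$ are used. Everything else is bookkeeping with the criterion of Proposition~\ref{pr:RMR}(1) and the equality of the two columns.
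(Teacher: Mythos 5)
Your proposal is correct and follows essentially the same route as the paper: part (1) is the matrix criterion of Proposition~\ref{pr:RMR}(1) applied to $-q_i+q_j$, and part (2) rests on the same key claim that any $e\in\R_j$ with $\pairing{e}{p_i}\neq 0$ must equal $-q_j+q_i$, proved by the same non-negativity argument on the columns $v_l$. The only difference is that you spell out explicitly the final verification that $\xi$ maps one set into the other, which the paper compresses into an appeal to symmetry.
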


\begin{proof}
\leavevmode
\begin{enumerate}
\item
By Proposition~\ref{pr:RMR}.(1) the vector $-q_i + q_j$ is a root if and only if $-A (-q_i + q_j) = v_i - v_j \ge 0$ and by definition $i \succeq j$ if and only if $v_i \ge v_j$.
\item
The conditions $\langle e,p_i\rangle=-1$ and $\langle e,p_s\rangle\ge 0$ for any $s\ne i$ mean that every root in $\R_i$ has the form $e=-q_i+\sum_{s\ne i} b_sq_s$ with 
$b_s\in\ZZ_{\ge 0}$. Similarly, every root in $\R_j$ has the form $e'=-q_j+\sum_{s\ne j} b_s'q_s$ with $b_s'\in\ZZ_{\ge 0}$. Since we have $p_k=-\sum_{s=1}^n a_{ks}p_s$ for $n+1\le k\le m$, the remaining conditions on elements in $\R_i$ are  
$$
\langle e,p_k\rangle=a_{ki}-a_{kj}b_j-\sum_{s\ne i,j}a_{ks}b_s\ge 0.
$$
Similarly, we have
$$
\langle e',p_k\rangle=a_{kj}-a_{ki}b_i'-\sum_{s\ne i,j}a_{ks}b_s'\ge 0.
$$
By assumptions, we know that $a_{ki}=a_{kj}$ for all $n+1\le k\le m$. So if $b_j=b_i'=0$, the conditions imposed on the sets $\{b_s\}$ and $\{b_s'\}$ are the same, and it remains to prove that if $e \in \R_i$ and $\pairing{e}{p_j}> 0$, then $e = -q_i + q_j$. We have
$$
-A e = v_i - \sum_{s \ne i} b_s v_s = -(b_j - 1) v_j - \sum_{s \ne i, j} b_s v_s
$$
since $v_i = v_j$ by definition of $\asymp$. We obtain that a non-negative vector $-A e$ equals the sum of non-positive vectors. It follows that $(b_ j - 1) v_j = 0$ and $b_s v_s = 0$ for all 
$s \ne i, j$. But all $v_s$ are nonzero, so $b_j = 1$ and $b_s = 0$ for all $s \ne i, j$. We obtain $e = -q_i + q_j$.
\end{enumerate}
\end{proof}

The following lemma contains one more property of Demazure roots that will be needed in the next section.

\begin{lemma} \label{lm:SSR} 
For $i \in C$ we have $q_i \in \R$ if and only if all roots in $\R_i$ are semisimple. In particular, the set $\R_i$ contains no special root.
\end{lemma}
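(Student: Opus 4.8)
The plan is to exploit the fact, recorded just before the statement, that the basic root $-q_i$ always lies in $\R_i$. Since a root $e$ is semisimple precisely when $-e\in\R$ as well, the root $-q_i$ is semisimple if and only if $q_i\in\R$. This observation immediately settles one implication: if every root in $\R_i$ is semisimple, then in particular $-q_i$ is semisimple, whence $q_i\in\R$.

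For the converse I would assume $q_i\in\R$ and translate this into the language of the ray matrix $A$ via Proposition~\ref{pr:RMR}.(2): it says that $v_i$, the $i$th column of $A$, is a standard unit vector, i.e.\ has a single entry equal to $1$ and all others $0$. I then take an arbitrary root $e=-q_i+\sum_{j\ne i}b_j q_j\in\R_i$ with $b_j\in\ZZ_{\ge 0}$ and apply Proposition~\ref{pr:RMR}.(1), which gives $v_i-\sum_{j\ne i}b_j v_j=-A e\ge 0$ coordinate-wise. Because $v_i$ is a unit vector while every $b_j v_j$ is a non-negative vector, this inequality forces $\sum_{j\ne i}b_j v_j$ to be supported on the single nonzero coordinate of $v_i$ and to be $\le v_i$ there.

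Carrying out this coordinate bookkeeping, and using that all entries are integers and that $A$ has no zero column (so each $v_j\ne 0$), I would conclude that either all $b_j=0$, in which case $e=-q_i$, or exactly one $b_j$ equals $1$ with $v_j=v_i$ and the rest vanish, in which case $e=-q_i+q_j$ is elementary and $i\asymp j$. In the first case $e=-q_i$ is semisimple because $q_i\in\R$; in the second, $v_j=v_i$ gives $j\succeq i$, so by Proposition~\ref{pr:RPO}.(1) the vector $-e=-q_j+q_i$ lies in $\R_j\subseteq\R$, and hence $e$ is semisimple. This same dichotomy shows that $\R_i$ contains only basic and elementary roots, which yields the final assertion that $\R_i$ has no special root.

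The routine but delicate point is the coordinate analysis in the third paragraph: one must argue that a non-negative integer combination of the non-negative vectors $v_j$ bounded above by a unit vector can only be $0$ or that unit vector itself, and then extract from the equality case that the unique contributing column coincides with $v_i$. This is where the non-vanishing of the columns of $A$ is essential, and it is the only place where more than a one-line argument is needed.
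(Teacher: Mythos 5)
Your proposal is correct and follows essentially the same route as the paper: the forward implication from semisimplicity of the basic root $-q_i$, and the converse via Proposition~\ref{pr:RMR} applied to $-Ae = v_i - \sum_{j\ne i} b_j v_j \ge 0$, yielding the dichotomy $e = -q_i$ or $e = -q_i + q_j$ with $v_j = v_i$. The only cosmetic difference is that the paper settles the coordinate bookkeeping by summing all coordinates (forcing $\sum_j b_j \in \{0,1\}$) rather than by your support analysis, but the content is identical.
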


\begin{proof}
If the root $-q_i \in \R_i$ is semisimple, then $q_i \in \R$.

Conversely, assume that $q_i \in \R$. Then $q_i$ is detached and the vector $v_i$ is as in Proposition~\ref{pr:RMR}.(2). Let $e = -q_i + \sum_{s \ne i} b_s q_s$ be a root in $\R_i$. Then
$$
-A e = v_i - \sum_{s \ne i} b_s v_s \ge 0.
$$
Since all vectors $v_s$ are nonzero, $\sum_{s \ne i} b_s$ equals either $0$ or $1$. In the first case all $b_s$ equal~$0$ and the root $e=-q_i$ is semisimple. Otherwise we have $v_i = v_j$ and $e = -q_i + q_j$ for some $1 \le j \ne i \le n$, so $i \asymp j$. In this case the vector $-e$ is a root as well, so the root $e$ is semisimple.
\end{proof}

Let $C_1, \dots, C_r$ be the equivalence classes of the relation $\asymp$. By Proposition~\ref{pr:RPO} an elementary root $-q_i + q_j$ is semisimple if and only if $i$ and $j$ are in the same equivalence class. By renumbering the classes we may assume that $C_l \succ C_s$ implies $l < s$. Furthermore, by renumbering the vectors $p_1, \dots, p_n$ we may assume that $C_1, \dots, C_r$ is an (element-wise) increasing sequence of segments of $C$. In other words, we have the following:
\begin{equation}\label{eq:Cs-condition}
\begin{array}{c}
\text{the sequence } C_1, \dots, C_r \text{ is } \succeq \text{-non-increasing \ and} \\
\text{there exist numbers } 0 = c_0 < c_1 < \ldots < c_r = n \text{ such that for all } 1\le s\le r \\
\text{we have } C_s = \{c_{s - 1} + 1, c_{s - 1} + 2, \dots, c_s\}.
\end{array}
\end{equation}

\begin{Lemma} \label{lm:RSt} 
Assume that the condition~\eqref{eq:Cs-condition} holds. If $e \in \R_i$ is a unipotent root, then $e = -q_i + \sum_{j > i} \pairing{e}{p_j} q_j$.
\end{Lemma}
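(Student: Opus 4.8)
The plan is to argue by contradiction, showing that a unipotent root $e = -q_i + \sum_{j \neq i} b_j q_j \in \R_i$, with $b_j = \pairing{e}{p_j} \in \ZZ_{\ge 0}$, cannot have $b_j > 0$ for any $j < i$. So I fix such an $e$ and suppose that $b_j > 0$ for some index $j < i$, aiming to extract a contradiction from the positional data encoded in condition~\eqref{eq:Cs-condition}. The first move is to turn the inequality $b_j > 0$ into structural information: by Lemma~\ref{lm:er} the condition $\pairing{e}{p_j} > 0$ forces $-q_i + q_j \in \R_i$, and then Proposition~\ref{pr:RPO}.(1) gives $i \succeq j$, that is, $v_i \ge v_j$. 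Next I locate $i$ and $j$ in the class decomposition: since by~\eqref{eq:Cs-condition} the classes $C_1, \dots, C_r$ are increasing segments of $C$, the assumption $j < i$ forces the class of $j$ to carry an index no larger than that of $i$, so either $i \asymp j$ or $j$ lies in a strictly earlier class.

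I then split into these two cases. If $j \in C_l$ and $i \in C_s$ with $l < s$, then $i \succeq j$ reads $C_s \succeq C_l$; as the classes are distinct this is the strict relation $C_s \succ C_l$, which by the renumbering convention of~\eqref{eq:Cs-condition} ($C_l \succ C_s \Rightarrow l < s$) forces $s < l$, contradicting $l < s$. If instead $i \asymp j$, I invoke the translation bijection $\xi(e) = -q_i + q_j + e$ of Proposition~\ref{pr:RPO}.(2). Because $e$ is unipotent it cannot coincide with the semisimple elementary root $-q_i + q_j$ (recall same-class elementary roots are semisimple), so $e \in \R_i \setminus \{-q_i + q_j\}$ and hence $\xi^{-1}(e) = e + q_i - q_j$ must belong to $\R_j$. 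A one-line pairing computation gives $\pairing{\xi^{-1}(e)}{p_j} = b_j - 1 \ge 0$, which is incompatible with membership in $\R_j$, where this pairing equals $-1$. Either case is contradictory, so no such $j$ exists and $e = -q_i + \sum_{j > i} \pairing{e}{p_j} q_j$, as claimed.

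The only subtle point — and hence the main obstacle — is verifying that the two cases are genuinely exhaustive and that each draws on exactly the right ingredient: the inter-class case rests on the linear-extension property of~\eqref{eq:Cs-condition} together with the antisymmetry of $\succeq$ on equivalence classes, while the intra-class case rests on the bijection of Proposition~\ref{pr:RPO}.(2) and on unipotence ruling out the excluded elementary root. The remaining content, namely the identity $\pairing{\xi^{-1}(e)}{p_j} = b_j - 1$, is a routine computation using $\pairing{q_i}{p_j} = 0$ and $\pairing{q_j}{p_j} = 1$, which I would dispatch in a single step.
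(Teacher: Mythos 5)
Your proof is correct and follows essentially the same route as the paper's: both derive $i \succeq j$ from Lemma~\ref{lm:er} and Proposition~\ref{pr:RPO}.(1), rule out the case $i \asymp j$ via the translation bijection of Proposition~\ref{pr:RPO}.(2) together with unipotence of $e$ (same-class elementary roots being semisimple), and finish with the numbering convention of~\eqref{eq:Cs-condition}. The only difference is organizational: you argue by contradiction from $j < i$, while the paper concludes directly that $i \succ j$ forces $j > i$.
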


\begin{proof}
The case of basic $e$ is trivial. Assume that $\pairing{e}{p_j} > 0$ for some $j \in C$. By Lemma~\ref{lm:er}, we have $-q_i + q_j \in \R_i$. So $i \succeq j$ by Proposition~\ref{pr:RPO}.(1). 

If $i \asymp j$, then by Proposition~\ref{pr:RPO}.(2) and the fact that $e$ is unipotent we have $e + q_i - q_j \in \R_j$. It implies $\pairing{e}{p_j} = 0$, a contradiction. We conclude that $i \succ j$ and thus $j>i$. 
\end{proof}


\section{The structure of a maximal unipotent subgroup}
\label{sec3}

Let us fix a radiant toric variety $X = X(\Sigma)$ with a bilateral fan $\Sigma$ and keep the notation of the previous section. Let us also assume the condition~\eqref{eq:Cs-condition}. We are going to describe the structure of a maximal unipotent subgroup of the automorphism group $\Aut(X)$.

Given $v \in N$ such that $\pairing{e}{v} \neq 0 \ \forall e \in \S$, we denote
$$
\S^+_v = \{e \in \S \ | \ \pairing{e}{v} > 0\} \text{ and } \R^+_v = \S^+_v \sqcup \U.
$$
Recall that $U(\R^+_v)$ is a maximal (regular) unipotent subgroup in $\Aut (X)$ and $\R(U(\R^+_v)) = \R^+_v$. Since any system of positive roots in $\S$ is $\S^+_v$ for some $v\in N$, 
any maximal regular unipotent subgroup is obtained this way for a suitable $v$. Let us define $\Umax = U(\R^+_v)$ with $v$ such that
\begin{equation}\label{eq:v-qi}
\pairing{q_1}{v} < \dots < \pairing{q_n}{v} < 0.
\end{equation}

\begin{Lemma} \label{lm:PRS} 
Assume that the condition~\eqref{eq:Cs-condition} is satisfied and $v \in N$ is chosen so that~\eqref{eq:v-qi} holds. Then a root $e \in \R$ lies in $\R^+_v$ if and only if it has the form $e = -q_i + \sum_{j > i} \pairing{e}{p_j} q_j$ for some~$i \in C$.
\end{Lemma}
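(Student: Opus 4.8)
The plan is to prove both implications by reducing everything to the decomposition $\R^+_v = \S^+_v \sqcup \U$ and then invoking the classification of the roots of a bilateral fan (basic, elementary, special, detached) together with Lemma~\ref{lm:RSt} and the strict ordering~\eqref{eq:v-qi}. The key observation that organizes the argument is that the unipotent roots all lie in $\bigsqcup_{i=1}^n \R_i$, since detached roots are semisimple, so Lemma~\ref{lm:RSt} applies to every unipotent root; the remaining work concerns only semisimple roots, where the three possible types (detached, basic, elementary) can be checked one at a time.

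First, to show that every $e \in \R^+_v$ has the stated form, I would split according to whether $e$ is unipotent or semisimple. If $e \in \U$, then $e$ lies in some $\R_i$ with $i \le n$, and Lemma~\ref{lm:RSt} immediately yields $e = -q_i + \sum_{j > i} \pairing{e}{p_j} q_j$. If instead $e \in \S^+_v$, I would use that a semisimple root is detached, basic, or elementary. A detached root $q_i$ satisfies $\pairing{q_i}{v} < 0$ by~\eqref{eq:v-qi}, hence lies outside $\S^+_v$, so that case is vacuous. A basic root $-q_i$ already has the required form, with all coefficients vanishing. For an elementary root $-q_i + q_j$ the condition $\pairing{e}{v} = \pairing{q_j}{v} - \pairing{q_i}{v} > 0$ forces $j > i$ via~\eqref{eq:v-qi}, which is exactly the required form.

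Conversely, suppose $e = -q_i + \sum_{j > i} \pairing{e}{p_j} q_j$; since $\pairing{e}{p_i} = -1$, this identifies $e$ as an element of $\R_i$ with $i \le n$. If $e$ is unipotent, then $e \in \U \subseteq \R^+_v$ and there is nothing to prove. If $e$ is semisimple, it is basic or elementary, and in either case I would compute $\pairing{e}{v}$ directly: for $e = -q_i$ it equals $-\pairing{q_i}{v} > 0$, and for $e = -q_i + q_j$ with $j > i$ it equals $\pairing{q_j}{v} - \pairing{q_i}{v} > 0$ by~\eqref{eq:v-qi}. Hence $e \in \S^+_v \subseteq \R^+_v$.

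The technical heart of the argument is Lemma~\ref{lm:RSt}, which already encodes the combinatorics of condition~\eqref{eq:Cs-condition} (that a unipotent root in $\R_i$ has all its positive coordinates at indices $j > i$); once that is available, the only genuine input for the semisimple cases is the strict monotonicity~\eqref{eq:v-qi} of the pairings $\pairing{q_i}{v}$. I expect the main point requiring care is the bookkeeping of the three kinds of semisimple roots, and in particular verifying that the detached roots are excluded on \emph{both} sides of the equivalence, so that the statement is an exact biconditional rather than merely one implication.
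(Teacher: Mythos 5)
Your proof is correct and follows essentially the same route as the paper's: unipotent roots are handled by Lemma~\ref{lm:RSt}, and semisimple roots are split into basic, elementary, and detached cases, with the ordering~\eqref{eq:v-qi} deciding the elementary case and excluding the detached one. Your version merely organizes the same case analysis as an explicit two-way implication, which the paper compresses into a single classification of roots.
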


\begin{proof}
All unipotent roots lie in $\R^+_v$ and have the desired form by Lemma~\ref{lm:RSt}. A semisimple root is basic, elementary or detached. In the first case it is in $\R^+_v$ and has the desired form. In the second case $-q_i+q_j\in\R^+_v$ if and only if $j>i$ due to~\eqref{eq:v-qi}. In the third case it is not in  $\R^+_v$ and is not of the corresponding form. 
\end{proof}

Hereinafter we fix a vector $v$ satisfying the condition~\eqref{eq:v-qi}. Denote $\R^+ = \R^+_v$ and $\R_i^+ = \R_i \cap \R^+$ for all $i \in C$. In particular, we have $\R^+ = \R(\Umax) = \bigsqcup_{i=1}^n \R_i^+$.

\begin{Remark} \label{rm:MRS} 
By Lemma~\ref{lm:er}, if for $i\in C$ there is a root $e\in\R_i$ with $\pairing{e}{p_j} > 0$ for some $j \in C$, then $i \succeq j$. We conclude that if $i$ lies in a $\succeq$-minimal equivalence class $C_s$ and $i$ is a maximal number in $C_s$, then $\R_i^+ = \{-q_i\}$. In particular, $\R_n^+ = \{-q_n\}$.
\end{Remark}

Given a subset $C' \subseteq C$, we denote
$$
U(C') = U\left( \bigsqcup_{i \in C'} \R_i^+ \right).
$$
For example, $U(i) = U(\R_i^+) = \langle U_e \ | \ e \in \R_i^+ \rangle$. Note that if $C' = \{i_1, \dots, i_k\}$, then $U(C')$ acts trivially on coordinates $x_j$ in the Cox ring $R(X)$ for all $j \ne i_1, \dots, i_k$; see Subsection~\ref{rss}. In particular, if $I$ and $J$ are disjoint subsets of $C$, then $U(I) \cap U(J) = \{\id\}$. Recall that for $e \in \R$ and $\alpha \in \KK$ we  denote $u_e(\alpha) = \exp(\alpha \partial_e) \in U_e$.

\begin{Lemma} \label{lm:MoR} 
Let $1 \le i \le j \le n$, $e \in \R_i^+$, $e' \in \R_j^+$, and $\alpha, \alpha' \in \KK$. Denote $d = \pairing{e}{p_j}$.
\begin{enumerate}
\item
The sum $e + e'$ is a root if and only if $i < j$ and $d > 0$, in which case $e + e' \in \R_i$.


\item
If $i < j$, then
$$
u_{e'}(\alpha') u_e(\alpha) \cdot x_i =
\prod_{l = 0}^d u_{e + l e'}\left(\binom{d}{l} \alpha (\alpha')^l\right) \cdot x_i.
$$


\item
If $e + e'$ is not a root, then $u_e(\alpha)$ commutes with $u_{e'}(\alpha')$.
\end{enumerate}
\end{Lemma}


\begin{proof}
\leavevmode
\begin{enumerate}
\item
Assume that $e + e'$ is a root. Then $i < j$, since otherwise $i = j$ and $\pairing{e + e'}{p_i} = -2$. Thus, $e + e' \in \R_i$ by Lemma~\ref{lm:PRS} and $d - 1 = \pairing{e + e'}{p_j} \ge 0$. The converse follows from Lemma~\ref{lm:RSu}.


\item
The action of the product $u_{e'}(\alpha') u_e(\alpha)$ on $x_i$ was computed in Subsection~\ref{sec1.4}:
$$
u_{e'}(\alpha') u_e(\alpha) x_i =
x_i + \sum_{l = 0}^d \binom{d}{l} \alpha (\alpha')^l \frac{x^{\theta(e) + l \theta(e')}}{x_j^l}.
$$
By the first statement of the lemma, $e + l e' \in \R_i$ for each $0 \le l \le d$. Note that
$$
\frac{x^{\theta(e) + l \theta(e')}}{x_j^l} = x^{\theta(e + l \theta(e'))}.
$$
Therefore,
$$
u_{e'}(\alpha') u_e(\alpha) x_i =
x_i + \sum_{l = 0}^d \binom{d}{l} \alpha (\alpha')^l x^{\theta(e + l \theta(e'))} =
\prod_{l = 0}^d u_{e + l e'}\left(\binom{d}{l} \alpha (\alpha')^l\right) \cdot x_i.
$$


\item
An automorphism $u \in \Umax$, arising from an automorphism of the Cox ring $R(X)$, is defined by its action on the coordinate functions $x_s$, $1 \le s \le m$ of $R(X)$. Therefore, if $u_1, u_2 \in \Umax$, then $u_1 = u_2$ if and only if
$$
u_1 x_s = u_2 x_s \text{ for all } 1 \le s \le m.
$$

Denote $u_1 = u_{e'}(\alpha') u_e(\alpha)$ and $u_2 = u_e(\alpha) u_{e'}(\alpha')$. By the computations in Subsection~\ref{sec1.4}, we know
$$
u_1 x_k = x_k = u_2 x_k \text{ for } k \ne i, j,
$$
so we only have to consider the cases $s = i, j$.

If $i = j$, then $u_e(\alpha)$ clearly commutes with $u_{e'}(\alpha')$. Let $i \ne j$. Since $e + e'$ is not a root, $d = 0$ by the first statement of the lemma. So by the second statement of the lemma
$$
u_1 x_i = x_i + \alpha x^{\theta(e)} = u_e(\alpha) x_i = u_2 x_i.
$$
The monomial $\theta(e')$ does not contain the variable $x_i$ since $i < j$, so
$$
u_2 x_j = u_{e'}(\alpha') x_j = u_1 x_j.
$$
Therefore, $u_1 = u_2$.
\end{enumerate}
\end{proof}

\begin{Proposition} \label{pr:Nor} 
\leavevmode
\begin{enumerate}
\item
The subgroup $U(i)$ is commutative of dimension $|\R_i^+|$.
\item
The subgroup $U(i)$ is normalized by the subgroup $U(j)$ for $j > i$. Moreover, if $e \in \R_i^+$, $e' \in \R_j^+$, and $d = \pairing{e}{p_j}$, then
$$
u_{e'}(\alpha') u_e(\alpha) u_{e'}(\alpha')^{-1} = \prod_{k=0}^d u_{e + k e'}\left( \binom{d}{k} \alpha (\alpha')^k \right) \in U(i)
$$
for all $\alpha, \alpha' \in \KK$.
\end{enumerate}
\end{Proposition}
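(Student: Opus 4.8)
The plan is to prove both parts by reducing everything to the commutator calculus of the locally nilpotent derivations $\partial_e$ on the Cox ring $R(X)$, using Lemma~\ref{lm:RSu} as the computational engine. The key structural fact I would exploit at the outset is the description of $\R_i^+$ from Lemma~\ref{lm:PRS}: every $e\in\R_i^+$ has the form $e=-q_i+\sum_{j>i}\pairing{e}{p_j}q_j$, so in particular $\pairing{e}{p_i}=-1$ and $\pairing{e}{p_s}=0$ for all $s<i$. This means the derivation $\partial_e=x^{\theta(e)}\frac{\partial}{\partial x_i}$ involves only the partial $\frac{\partial}{\partial x_i}$ and monomials $x^{\theta(e)}$ supported on variables $x_j$ with $j>i$ (and possibly $j>n$). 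This support condition is what I expect to make the indices line up so cleanly.

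For part (1), I would first observe that all roots in $\R_i^+$ lie in the single set $\R_i$, so every pair $e,e'\in\R_i^+$ has $\pairing{e'}{p_i}=-1\neq 0$, which means Lemma~\ref{lm:RSu} does not apply directly; instead I would argue that $\partial_e$ and $\partial_{e'}$ commute because both equal a monomial times $\frac{\partial}{\partial x_i}$, and these monomials do not depend on $x_i$ (since $\theta(e)$ has $0$ in coordinate $i$). Thus $[\partial_e,\partial_{e'}] = x^{\theta(e)}\frac{\partial}{\partial x_i}\bigl(x^{\theta(e')}\bigr)\frac{\partial}{\partial x_i} - x^{\theta(e')}\frac{\partial}{\partial x_i}\bigl(x^{\theta(e)}\bigr)\frac{\partial}{\partial x_i}=0$, so the corresponding one-parameter subgroups commute and $U(i)$ is abelian. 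Since $U(i)$ is the product of the $|\R_i^+|$ distinct one-dimensional root subgroups (which is an isomorphism of varieties by the remark preceding this section), commutativity upgrades this to a group isomorphism $U(i)\cong\GG_a^{|\R_i^+|}$, giving the dimension count.

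For part (2), the heart is the conjugation formula, and I would compute it by exponentiating the derivation identity from Lemma~\ref{lm:RSu}.(1). Taking $e\in\R_i^+$, $e'\in\R_j^+$ with $j>i$, the hypotheses of Lemma~\ref{lm:RSu} are met: $\pairing{e'}{p_i}=0$ because $i<j$ forces $i$ to lie outside the support $\{l>j\}\cup\{i=j\}$ of $e'$, and $d=\pairing{e}{p_j}\ge 0$. When $d>0$ we get $[\partial_e,\partial_{e'}]=-d\,\partial_{e+e'}$ with $e+e'\in\R_i$ (and one checks $e+ke'\in\R_i^+$ for $0\le k\le d$, again by Lemma~\ref{lm:PRS}, since the support stays in $\{l>i\}$). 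The cleanest route is to note that $\mathrm{ad}(\partial_{e'})$ acts on the chain $\partial_e,\partial_{e+e'},\dots,\partial_{e+de'}$ and annihilates $\partial_{e+de'}$, so conjugation $\exp(\alpha'\,\mathrm{ad}\,\partial_{e'})$ produces a finite sum; translating this into the group via the exponential and collecting binomial coefficients yields $u_{e'}(\alpha')^{-1}u_e(\alpha)u_{e'}(\alpha')=\prod_{k=0}^d u_{e+ke'}\bigl(\binom{d}{k}\alpha(\alpha')^k\bigr)$. The case $d=0$ gives commutativity directly from Lemma~\ref{lm:RSu}.(2). Since each factor lies in $U(i)$, the product does too, proving $U(j)$ normalizes $U(i)$.

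The main obstacle I anticipate is the bookkeeping in converting the Lie-algebra commutator relation into the exact binomial group identity: one must verify that the higher commutators $\mathrm{ad}(\partial_{e'})^k(\partial_e)$ equal $(-1)^k\frac{d!}{(d-k)!}\partial_{e+ke'}$ and that no terms outside the advertised chain appear. This amounts to checking that iterating Lemma~\ref{lm:RSu}.(1) along $e,e+e',\dots$ keeps reducing the exponent of $x_j$ by one each time while the hypothesis $\pairing{e'}{p_i}=0$ persists (it does, since $e'$ is fixed and $i$ is unchanged). Once the $\mathrm{ad}$-chain is pinned down, the binomial coefficients emerge from the standard exponential-conjugation expansion $\exp(-\alpha'\partial_{e'})\,(\alpha\partial_e)\,\exp(\alpha'\partial_{e'}) = \sum_{k\ge 0}\frac{(\alpha')^k}{k!}\alpha\,\mathrm{ad}(\partial_{e'})^k(\partial_e)$, and the claimed formula follows after re-exponentiating on the group side, using that the resulting derivations pairwise commute by part (1).
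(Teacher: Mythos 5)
Your proposal is correct in substance, and for part (2) it takes a genuinely different route from the paper. For part (1) the two arguments are close in spirit: the paper notes that the sum of two roots of $\R_i$ is never a root, while you verify directly that two derivations of the form $x^{\theta(e)}\frac{\partial}{\partial x_i}$ with $x_i$-free coefficients commute; both are valid, and yours is self-contained. For part (2) the paper avoids Lie-algebra calculus entirely: it applies the triple product $u_{e'}(-\alpha')\,u_e(\alpha)\,u_{e'}(\alpha')$ to the Cox coordinate $x_i$ and reads off the binomial coefficients from the expansion of $\alpha\bigl(x_j+\alpha' x^{\theta(e')}\bigr)^d\,x^{\theta(e)}/x_j^d$, whereas you compute the chain $\mathrm{ad}(\partial_{e'})^k(\partial_e)$, apply $\mathrm{Ad}(\exp)=\exp(\mathrm{ad})$, and re-exponentiate using the commutativity from part (1). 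Your route is a standard and legitimate alternative; what the paper's computation buys is that all orderings and signs come out automatically, while yours requires exactly the bookkeeping you flagged as the main obstacle.

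On that bookkeeping, your intermediate formulas are not internally consistent as written. By Lemma~\ref{lm:RSu} one has $[\partial_{e'},\partial_e]=+d\,\partial_{e+e'}$, so with the usual convention $\mathrm{ad}(X)Y=[X,Y]$ the chain is $\mathrm{ad}(\partial_{e'})^k(\partial_e)=\frac{d!}{(d-k)!}\,\partial_{e+ke'}$, \emph{without} your factor $(-1)^k$; on the other hand, the correct expansion is $\exp(-A)B\exp(A)=\sum_k\frac{(-1)^k}{k!}\,\mathrm{ad}(A)^k(B)$, so your expansion is missing a $(-1)^k$. Carried through literally, your two conventions combine to give $\prod_k u_{e+ke'}\bigl(\binom{d}{k}\alpha(-\alpha')^k\bigr)$, which differs from the asserted formula in the odd-$k$ factors. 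The missing ingredient is that identifying elements of $\Aut(\hat{X})$ with operators on the Cox ring reverses the order of composition (pullback of functions is contravariant), so the group element $u_{e'}(\alpha')^{-1}u_e(\alpha)u_{e'}(\alpha')$ corresponds to the operator product $\exp(\alpha'\partial_{e'})\exp(\alpha\partial_e)\exp(-\alpha'\partial_{e'})$ rather than the one you wrote; with this, the standard expansion yields $\sum_k\binom{d}{k}\alpha(\alpha')^k\partial_{e+ke'}$ and hence the stated formula. This is a convention-level repair rather than a gap in the method --- note that the paper's own computation sidesteps the issue by applying the leftmost group factor to $x_i$ first --- but it must be made explicit for your argument to be rigorous.
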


\begin{proof}
The first assertion follows from Lemma~\ref{lm:MoR}. Denote
$$
u_1 = u_{e'}(\alpha') u_e(\alpha) u_{e'}(\alpha')^{-1} =
u_{e'}(\alpha') u_e(\alpha) u_{e'}(-\alpha'),
$$
$$
u_2 = \prod_{l = 0}^d u_{e + l e'}\left(\binom{d}{l} \alpha (\alpha')^l\right).
$$
Clearly, $u_2 \in U(i)$, so $u_2$ fixes $x_s$ for all $s \ne i$.

As in Lemma~\ref{lm:MoR}, it suffices to prove $u_1 x_s = u_2 x_s$ for all $1 \le s \le m$. This equation clearly holds for all $s \ne i, j$. Since the monomial $\theta(e')$ does not contain the term $x_i$, we have
$$
u_1 x_j = u_{e'}(\alpha') u_{e'}(-\alpha') x_j = x_j = u_2 x_j.
$$
By Lemma~\ref{lm:MoR} $u_1 x_i = u_2 x_i$ since
$$
u_1 x_i = u_{e'}(\alpha') u_e(\alpha) x_i.
$$
Therefore, $u_1 = u_2$.
\end{proof}

\begin{Remark}
Since all elements of the subgroup $U(i)$ change only one coordinate $x_i$ in the spectrum $\KK^m = \Spec(R(X))$ of the Cox ring $R(X) = \KK[x_1, \ldots, x_m]$ (see Subsection~\ref{sec1.4}), we conclude that for any $1 \le i \le n$ general orbits of the action of the subgroup $U(i)$ on $X$ are one-dimensional.
\end{Remark}

Let $U \subseteq \Aut(X)$ be a regular unipotent subgroup such that $\M = \R(U) \subseteq \R^+$. For each $i = 1, \dots, n$ denote $\M_i = \M \cap \R_i^+$. 

\begin{Corollary} \label{cr:SDG} 
We have
$$
U = \Big( \big( \cdots ( U(\M_n) \ltimes U(\M_{n - 1}) ) \dots \big) \ltimes U(\M_2) \Big) \ltimes U(\M_1).
$$
In particular, $\Umax \cong \Big( \big( \cdots ( \GG_a \ltimes \GG_a^{|\R_{n-1}^+|} ) \dots \big) \ltimes \GG_a^{|\R_2^+|} \Big) \ltimes \GG_a^{|\R_1^+|}$.
\end{Corollary}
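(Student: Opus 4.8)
The plan is to realize $U$ as an iterated semidirect product by adjoining the factors $U(\M_i)$ one at a time in decreasing order of $i$, each new factor entering as a normal subgroup. Proposition~\ref{pr:Nor} supplies both the commutativity of the individual factors and the normalization relations that make this work.

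First I would record two facts about the factors. Since $\M_i\subseteq\R_i^+$ and the sum of two roots of $\R_i$ is never a root, Proposition~\ref{pr:Nor}.(1) shows that each $U(\M_i)$ is commutative and isomorphic to $\GG_a^{|\M_i|}$; the same observation gives $\R(U(i))=\R_i^+$. I then claim $U(\M_i)=U\cap U(i)$. Indeed, $U\cap U(i)$ is again a regular unipotent subgroup (an intersection of two $T$-normalized unipotent subgroups), and directly from the definition $\R(H)=\{e\in\R : U_e\subseteq H\}$ one has
$$
\R(U\cap U(i))=\R(U)\cap\R(U(i))=\M\cap\R_i^+=\M_i,
$$
so that $U\cap U(i)=U(\R(U\cap U(i)))=U(\M_i)$, using that a regular unipotent subgroup is recovered from its roots.

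The key step is the normalization relation: for $j>i$ the subgroup $U(\M_j)$ normalizes $U(\M_i)$. Here I would combine two things. On one hand $U(\M_j)\subseteq U(j)$ normalizes $U(i)$ by Proposition~\ref{pr:Nor}.(2), so every $U(\M_j)$-conjugate of $U(\M_i)\subseteq U(i)$ lies in $U(i)$. On the other hand such a conjugate remains inside the group $U$, since $U(\M_i),U(\M_j)\subseteq U$. Hence every conjugate lies in $U\cap U(i)=U(\M_i)$, which is the assertion. This is the only place that genuinely uses the saturation of $\M$, encoded in the identity $U\cap U(i)=U(\M_i)$; obtaining the normalization cleanly in this way, rather than by tracking the explicit products $\prod_k u_{e+ke'}$ produced by Proposition~\ref{pr:Nor}.(2) and checking ad hoc that each factor lies in $U$, is the main obstacle, and the intersection description is what removes it.

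With these in place the rest is a direct induction. Set $Q_s=U(\M_n)U(\M_{n-1})\cdots U(\M_{n-s+1})$. Assuming $Q_{s-1}=U(\M_n)\cdots U(\M_{n-s+2})$ is already a subgroup, every index occurring in $Q_{s-1}$ exceeds $n-s+1$, so $Q_{s-1}$ normalizes $U(\M_{n-s+1})$; therefore $Q_s=Q_{s-1}\,U(\M_{n-s+1})$ is a subgroup in which $U(\M_{n-s+1})$ is normal. Moreover $Q_{s-1}\cap U(\M_{n-s+1})=\{\id\}$ by the property $U(I)\cap U(J)=\{\id\}$ for disjoint $I,J\subseteq C$, applied to $I=\{n-s+2,\dots,n\}$ and $J=\{n-s+1\}$. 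This yields $Q_s=Q_{s-1}\ltimes U(\M_{n-s+1})$. Finally $Q_n=U(\M_n)\cdots U(\M_1)$ is a subgroup of $U$ containing every generator $U_e$, $e\in\M$, so $Q_n=U$, and unwinding the induction reproduces exactly the left-associated expression in the statement. The formula for $\Umax$ is the special case $\M=\R^+$, where $\M_i=\R_i^+$ and $U(\M_i)=U(i)\cong\GG_a^{|\R_i^+|}$.
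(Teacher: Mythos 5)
Your proof is correct, and its skeleton is the same as the paper's: both build the left-associated iterated semidirect product out of Proposition~\ref{pr:Nor}, using that $U(\M_j)$ normalizes $U(\M_i)$ for $j>i$ and that factors with disjoint supports intersect trivially. The real difference is how the key normalization step is justified. The paper simply ``applies Proposition~\ref{pr:Nor}.(2) to all pairs $(1,j)$,'' which literally only shows that a $U(\M_j)$-conjugate of $u_e(\alpha)$, $e\in\M_1$, equals $\prod_{k} u_{e+ke'}\bigl(\tbinom{d}{k}\alpha(\alpha')^k\bigr)\in U(1)$; that this product lies in the smaller group $U(\M_1)$ needs the extra fact that $e+ke'\in\M$, i.e.\ the saturation of $\M=\R(U)$ (which one gets from Lemma~\ref{lm:RSu} applied inside $\Lie(U)$, or from Lemma~\ref{propus}, which only appears later in the paper). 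You isolate exactly this point and dispose of it by a different device: instead of tracking roots through the explicit formula, you prove the intersection identity $U\cap U(i)=U(\M_i)$ --- using that $U\cap U(i)$ is again a regular unipotent subgroup (here one implicitly uses that unipotent groups in characteristic zero are connected) and that a regular unipotent subgroup is recovered from its root set --- and then normalization follows abstractly, since conjugates stay in both $U$ and $U(i)$. This is a clean repair of a step the paper compresses, and it buys a proof that never needs the explicit conjugation formula, only the qualitative statement that $U(j)$ normalizes $U(i)$; the paper's route is shorter but leaves the saturation point to the reader. Your bottom-up induction on $Q_s$ (adjoining factors from $\M_n$ downward) versus the paper's top-down peeling of $U(\M_1)$ first is purely cosmetic. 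One trivial omission: to write the innermost factor of $\Umax$ as $\GG_a$ rather than $\GG_a^{|\R_n^+|}$ you should invoke $|\R_n^+|=1$, which is Remark~\ref{rm:MRS}.
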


\begin{proof}
Applying the second claim of Proposition~\ref{pr:Nor} to all pairs $(1, j)$, $j > 1$ we obtain $U = U(\M_2 \cup \ldots \cup \M_n) \ltimes U(\M_1)$. Similarly, $U(\M_2 \cup \ldots \cup \M_n) = U(\M_3, \ldots,\M_n) \ltimes U(\M_2)$, and so on. At the end we obtain
$$
U = \Big( \big( \cdots ( U(\M_n) \ltimes U(\M_{n - 1}) ) \dots \big) \ltimes U(\M_2) \Big) \ltimes U(\M_1),
$$
which proves the assertion. If $U = \Umax$, then $\M = \R^+$ and $U(\M_i) = U(i) = \GG_a^{|\R_i^+|}$ by the first assertion of Proposition~\ref{pr:Nor}. Moreover, $|\R_n^+| = 1$ by Remark~\ref{rm:MRS}.
\end{proof}

\medskip

Now let us fix positive integers $k > l$ and consider the subgroup $U_{k, l}$ of the group $U_k$ of all unitriangular $(k \times k)$-matrices such that
$$
(\beta_{i j})_{i, j = 1}^k \in U_{k, l} \iff \beta_{i j} = 0 \text{ for all } j > i > l.
$$
In other words, the group $U_{k, l}$ consists of matrices of the form

\smallskip

$$
\renewcommand{\bigstar}{\text{\Huge *}}
\newcommand{\bigzero}{\text{\huge 0}}
\left( \begin{array}{c|c}
\begin{matrix}
1 &  &  & \\
 & \ddots & \bigstar & \\
 &  & \ddots &  \\
\bigzero &  &  & 1
\end{matrix} &
\bigstar \\
\hline
\bigzero &
\begin{matrix}
1 &  &  & \\
 & \ddots & & \bigzero \\
 &  & \ddots &  \\
\bigzero &  &  & 1
\end{matrix}
\end{array} \right)
$$

\bigskip

In particular, the subgroup $U_{k, k - 1}$ coincides with the group $U_k$. Note also that the group $U_{k, l}$ is commutative if and only if $l = 1$.

\begin{Proposition} \label{pr:ECG} 
Let $C_s = \{c_{s - 1} + 1, c_{s - 1} + 2, \dots, c_s\}$ be an equivalence class in $C$. Denote $c = c_{s - 1} + 1$. Then
$$
U(C_s) \cong U_{k, l}
$$
with $k = |\R_c^+| + 1$ and $l = |C_s|$.
\end{Proposition}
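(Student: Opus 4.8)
The plan is to realise $U(C_s)$ as an explicit group of $k\times k$ unitriangular matrices by letting it act linearly on a suitable $k$-dimensional space of monomials in the Cox ring, and then to identify that matrix group with $U_{k,l}$ by a dimension count.

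First I would describe the roots of $U(C_s)$ and the associated derivations. Since all elements of $C_s$ are $\asymp$-equivalent, Proposition~\ref{pr:RPO}.(1) gives $-q_i+q_j\in\R_i^+$ for $i,j\in C_s$ with $j>i$, and because $v_i=v_j$ one computes $\theta(-q_i+q_j)$ to be the $j$-th coordinate vector, so that $\partial_{-q_i+q_j}=x_j\,\frac{\partial}{\partial x_i}$. Call $e\in\R_c^+$ a \emph{monomial root} if $\pairing{e}{p_j}=0$ for all $j\in C_s$ with $j\ne c$. A root of $\R_c^+$ with $\pairing{e}{p_j}>0$ for some such $j$ is the semisimple elementary root $-q_c+q_j$ (by Lemma~\ref{lm:er}, Lemma~\ref{lm:RSt}, and the fact that a semisimple root is basic or elementary); hence the $k-1=|\R_c^+|$ roots of $\R_c^+$ split into the $l-1$ semisimple elementary roots $-q_c+q_j$ with $j\in C_s$, $j>c$, and the remaining $k-l$ monomial roots (the basic root $-q_c$ and the unipotent roots). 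For a monomial root $e=-q_c+\sum_{s\notin C_s}\pairing{e}{p_s}q_s$ and any $i\in C_s$ the translate $e+q_c-q_i=-q_i+\sum_{s\notin C_s}\pairing{e}{p_s}q_s$ again lies in $\R_i^+$ (here $v_c=v_i$ is used), and $x^{\theta(e+q_c-q_i)}=x^{\theta(e)}=:M(e)$ is a monomial in the variables $x_s$, $s\notin C_s$, \emph{independent of $i$}. As $\theta$ is injective on $\R_c$, distinct monomial roots give distinct monomials $M_1,\dots,M_{k-l}$. Thus each root subgroup of $U(C_s)$ acts on the Cox ring either by $x_i\mapsto x_i+\alpha x_j$ (with $i,j\in C_s$, $j>i$) or by $x_i\mapsto x_i+\alpha M_\mu$, while fixing every other variable.

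Next I would set $V=\langle x_c,\dots,x_{c_s},M_1,\dots,M_{k-l}\rangle_\KK$, of dimension $l+(k-l)=k$ as its spanning monomials are pairwise distinct. The formulas above show that each generator maps $V$ into $V$ and fixes every $M_\mu$, so $V$ is $U(C_s)$-invariant and we obtain a representation $\pi\colon U(C_s)\to\GL(V)$. In the basis $x_c,\dots,x_{c_s},M_1,\dots,M_{k-l}$ every $\pi(g)$ is lower unitriangular, and since the $M_\mu$ are fixed its strictly lower part occupies only the first $l$ columns. Hence $\pi$ maps $U(C_s)$ into the group $L$ of lower unitriangular matrices whose below-diagonal entries lie in the first $l$ columns; and $L=U_{k,l}^{\top}$ is carried isomorphically onto $U_{k,l}$ by $g\mapsto(g^{\top})^{-1}$, so it suffices to prove that $\pi$ is an isomorphism onto $L$. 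For injectivity, note that every $g\in U(C_s)$ fixes each variable $x_s$ with $s\notin C_s$ and sends $x_i\mapsto\pi(g)(x_i)$ for $i\in C_s$; thus $\pi(g)$ determines $g$ as an automorphism of the Cox ring, and, the multiplication map $U_{e_1}\times\dots\times U_{e_k}\to U(C_s)$ being an isomorphism of varieties, $\pi$ is injective.

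For surjectivity I would count dimensions. The set $\bigsqcup_{i\in C_s}\R_i^+$ is saturated: if $e\in\R_i^+$, $e'\in\R_j^+$ with $i,j\in C_s$ and $e+e'\in\R$, then $\pairing{e+e'}{p_s}\ge0$ for every $s\notin\{i,j\}$, so the unique index at which $e+e'$ pairs to $-1$ lies in $\{i,j\}\subseteq C_s$, whence $e+e'\in\R_{i'}^+$ with $i'\in C_s$. By Proposition~\ref{pr:Nor} this makes the products $\prod u_e(\alpha_e)$ a group, so $\R(U(C_s))=\bigsqcup_{i\in C_s}\R_i^+$ and, using again the variety isomorphism,
\[
\dim U(C_s)=\sum_{t=0}^{l-1}\bigl|\R_{c+t}^+\bigr|=\sum_{t=0}^{l-1}\bigl((k-l)+(l-1-t)\bigr)=l(k-1)-\binom{l}{2},
\]
which equals $\dim U_{k,l}=\binom{k}{2}-\binom{k-l}{2}$. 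As $\pi$ is an injective homomorphism of connected unipotent groups with closed image inside the irreducible group $L$ of the same dimension, $\pi(U(C_s))=L\cong U_{k,l}$. The main obstacle is the structural input making the action linear and uniform: one must use the $\asymp$-translation of Proposition~\ref{pr:RPO}.(2) together with Lemma~\ref{lm:RSt} to see that no unipotent root of $\R_i^+$ involves another index of $C_s$ and that the basic and unipotent roots of the various $\R_i^+$, $i\in C_s$, all yield the \emph{same} family of monomials $M_\mu$ in the variables outside $C_s$. Once this $i$-independence and the invariance of $V$ are established, the triangular shape, the injectivity of $\pi$, and the dimension count are routine.
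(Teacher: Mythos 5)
Your proof is correct, and it reaches the isomorphism by a genuinely different route than the paper. The paper first works out the internal structure of $U_{k,l}$ (the subgroups $H_i\cong\GG_a^{k-i}$, their iterated semidirect product, and the matrix commutator identity), invokes Corollary~\ref{cr:SDG} to decompose $U(C_s)$ into the subgroups $U(i)$, $i\in C_s$, and then defines the isomorphism directly on generators, $u_e(\alpha)\mapsto E+\alpha E_{i-c+1,\,j_e}$; the verification amounts to the observation that the two semidirect-product structures and the commutation formulas (Proposition~\ref{pr:Nor}.(2) versus the matrix identity) agree. You instead obtain the homomorphism for free, as the restriction of the $U(C_s)$-action on the Cox ring to the explicit invariant subspace $V=\langle x_c,\dots,x_{c_s},M_1,\dots,M_{k-l}\rangle_\KK$, and then prove injectivity by faithfulness on the Cox ring and surjectivity by a dimension count. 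Both arguments rest on the same structural inputs --- Proposition~\ref{pr:RPO}.(2) and Lemma~\ref{lm:RSt}, which make the non-elementary roots of the various $\R_i^+$, $i\in C_s$, translates of one another producing the same ($i$-independent) monomials $x^{\theta(e)}$ --- and in effect they build the same matrix realization. What your route buys: no commutation relations need to be verified, and well-definedness of the map is automatic. What the paper's route buys: the explicit decomposition of $U_{k,l}$ into the subgroups $H_i$ mirroring the $U(i)$'s, which is re-used later (Theorem~\ref{th:MUS}, and the description of $\Uss$ explicitly cites the construction from this proof). Two spots in your write-up deserve one more line of care: in the saturation argument, the conclusion $e+e'\in\R_{i'}$ must be upgraded to $e+e'\in\R_{i'}^+$, which follows from Lemma~\ref{lm:PRS} because the coefficients of $e+e'$ are supported on indices larger than $i'$; and the fact that a saturated set of roots is the root set of a regular subgroup is Lemma~\ref{propus}, not Proposition~\ref{pr:Nor}.
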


\begin{proof}
Denote by $E$ the identity $(k \times k)$-matrix. Let $E_{i, j}$ be a $(k \times k)$-matrix with a unique nonzero entry on $i^{\text{th}}$ row and $j^{\text{th}}$ column equal to $1$. For each $i \le l$ the matrices $E + \sum_{j > i} \beta_{i j} E_{i, j}$ form a subgroup $H_i$ in $U_{k, l}$ isomorphic to $\GG_a^{k - i}$:
$$
\left( E + \sum_{j > i} \beta_{i j} E_{i, j} \right) \cdot \left( E + \sum_{j > i} \beta'_{i j} E_{i, j} \right) = E + \sum_{j > i} (\beta_{i j} + \beta'_{i j}) E_{i, j}.
$$

Further, for $l \ge i' > i$ we have
$$
\left( E + \sum_{j > i'} \beta_{i' j} E_{i', j} \right) \cdot \left( E + \sum_{j > i} \beta_{i j} E_{i, j} \right) = E + \sum_{j > i'} \beta_{i' j} E_{i', j} + \sum_{j > i} \beta_{i j} E_{i, j},
$$
so every element of $U_{k, l}$ is uniquely decomposed as the product $h_l h_{l - 1} \cdots h_1$, where $h_i \in H_i$ for all $i$. Finally, for $1 \le i < j \leq k$ and $i < i' < j' \leq k$ write
$$
(E - \beta' E_{i', j'}) (E -  \beta E_{i, j}) (E - \beta' E_{i', j'})^{-1} = (E - \beta' E_{i', j'}) (E -  \beta E_{i, j}) (E + \beta' E_{i', j'}) =
$$
$$
= E - \beta E_{i, j} - \delta_{j i'} \beta \beta' E_{i, j'}.
$$
In particular, $U_{k, l} = \Big( \big( \cdots (H_l \ltimes H_{l - 1}) \dots \big) \ltimes H_2 \Big) \ltimes H_1$.

By Corollary~\ref{cr:SDG} the group $U(C_s)$ is a semidirect product of the subgroups $U(c)$, $U(c + 1)$, $U(c + 2), \dots$ with the action on each next normalized subgroup as in Proposition~\ref{pr:Nor}. Note that if $i \in C_s$, then $|\R_i^+| = |\R_c^+| - (i - c) = k - (i - c + 1)$ by Proposition~\ref{pr:RPO}.(2) and Lemma~\ref{lm:PRS}.

Denote by $\T$ the set of roots in $\R^+$, which are not elementary semisimple. Let us enumerate the roots $e \in \R_c^+ \cap \T$ by numbers $j_e$, $l + 1 \le j_e \le k$. By Proposition~\ref{pr:RPO}.(2), for every $i \in C_s$ the map $e \mapsto e - q_c + q_i$ establishes a bijection between roots in $\R_i^+ \cap \T$ and roots in $\R_c^+ \cap \T$. We let $j_e$ be equal to $j_{e'}$, where the root $e'$ corresponds to $e$ under this bijection.

We conclude that a map $\phi: U(C_s) \to U_{k, l}$ defined on generators as
$$
\phi(u_{-q_i + q_j}(\alpha)) = E - \alpha E_{i - c + 1, j - c + 1} \text{ with } i, j \in C_s;
$$
$$
\phi(u_e(\alpha)) = E - \alpha E_{i - c + 1, j_e} \text{ for other roots } e \in \R_i^+
$$
is the required isomorphism.
\end{proof}

\begin{Theorem} \label{th:MUS} 
Let $\Umax$ be a maximal unipotent subgroup of the automorphism group $\Aut(X)$ of a radiant toric variety $X$. Assume that the condition~\eqref{eq:Cs-condition} holds for the equivalence classes $C_1, \dots, C_r$ of the relation $\asymp$. Then
$$
\Umax \cong \Big( \big( \cdots \big( U_{k_r, l_r} \ltimes U_{k_{r-1}, l_{r-1}} \big) \dots \big) \ltimes U_{k_2, l_2} \Big) \ltimes U_{k_1, l_1},
$$
where $k_s = |\R_{c_{s - 1} + 1}^+| + 1$ and $l_s = |C_s|$, $1\le s\le r$.
\end{Theorem}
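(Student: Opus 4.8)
The plan is to build $\Umax$ up from the rank-one layers $U(i)$ supplied by Corollary~\ref{cr:SDG}, regroup the consecutive layers into blocks indexed by the equivalence classes $C_1, \dots, C_r$, and then identify each block by Proposition~\ref{pr:ECG}. First I would apply Corollary~\ref{cr:SDG} to $U = \Umax$ (so $\M = \R^+$ and $U(\M_i) = U(i)$), which gives
$$
\Umax = \Big( \big( \cdots ( U(n) \ltimes U(n-1) ) \cdots \big) \ltimes U(2) \Big) \ltimes U(1),
$$
where by Proposition~\ref{pr:Nor}.(2) the subgroup $U(j)$ normalizes $U(i)$ whenever $j > i$.

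Next I would regroup these factors according to the classes. By condition~\eqref{eq:Cs-condition} each class $C_s = \{c_{s-1}+1, \dots, c_s\}$ is a consecutive segment of $C$, and $s < t$ forces every index in $C_s$ to be strictly smaller than every index in $C_t$. The subgroup $U(C_s)$ is generated by the $U(i)$ with $i \in C_s$, and applying Corollary~\ref{cr:SDG} to the root set $\bigsqcup_{i \in C_s}\R_i^+$ exhibits $U(C_s)$ as the ordered semidirect product of these $U(i)$; in particular $U(C_s)$ is, as a variety, the ordered product of the $U(i)$ with $i \in C_s$. The crucial point is that this blocking is compatible with the global structure. Since $j > i$ implies $U(j)$ normalizes $U(i)$, the subgroup $U(C_1)$, generated by the $U(i)$ with $i \in \{1, \dots, c_1\}$, is normalized by every $U(j)$ with $j > c_1$ and hence is normal in $\Umax$. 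Combining this with the bijectivity of the multiplication map $\prod_i U(i) \to \Umax$ and the relation $U(I) \cap U(J) = \{\id\}$ for disjoint $I, J \subseteq C$, I obtain $\Umax = U(C_2 \cup \dots \cup C_r) \ltimes U(C_1)$. Iterating the same argument on $U(C_2 \cup \dots \cup C_r)$ peels off $U(C_2)$, and so on, which yields
$$
\Umax = \Big( \big( \cdots \big( U(C_r) \ltimes U(C_{r-1}) \big) \cdots \big) \ltimes U(C_2) \Big) \ltimes U(C_1).
$$

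Finally I would invoke Proposition~\ref{pr:ECG}, which identifies $U(C_s) \cong U_{k_s, l_s}$ with $k_s = |\R_{c_{s-1}+1}^+| + 1$ and $l_s = |C_s|$; substituting these isomorphisms into the displayed decomposition gives the claimed formula. The only genuine content beyond bookkeeping is the regrouping step: verifying that collecting consecutive rank-one layers into blocks respects both the normality relations and the factorization as a product of varieties. This is exactly where condition~\eqref{eq:Cs-condition} is indispensable, since the fact that the classes are consecutive segments ordered so that larger index means lower position in the preorder $\succeq$ guarantees both that each block $U(C_s)$ is normalized by all later blocks and that each partial product $U(C_1 \cup \dots \cup C_s)$ corresponds to an initial segment of $C$, which is what makes it normal.
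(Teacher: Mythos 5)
Your proposal is correct and follows essentially the same route as the paper: the paper's proof also obtains $\Umax = \big( \cdots \big( U(C_r) \ltimes U(C_{r-1}) \big) \cdots \big) \ltimes U(C_1)$ by the argument of Corollary~\ref{cr:SDG} applied with the roots grouped by equivalence classes, and then identifies each $U(C_s) \cong U_{k_s, l_s}$ via Proposition~\ref{pr:ECG}. Your version merely makes the regrouping step (block normality and trivial intersections) more explicit than the paper's one-line appeal to the same argument.
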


\begin{proof}
By the same argument as in the proof of Corollary~\ref{cr:SDG}, grouping together subsets $\R_i^+$, $i \in C_s$ for each $1\le s\le r$, we obtain a decomposition
$$
\Umax = \Big( \big( \cdots \big( U(C_r) \ltimes U(C_{r - 1}) \big) \dots \big) \ltimes U(C_2) \Big) \ltimes U(C_1).
$$
By Proposition~\ref{pr:ECG} we have $U(C_s) \cong U_{k_s, l_s}$. This proves the theorem.
\end{proof}

\begin{Example}
Let $X$ be the projective space $\PP^n$ so that the number of rays of the fan $\Sigma$ equals $n + 1$ and $p_{n + 1} = -(p_1 + \dots + p_n)$. We have $1 \asymp 2 \asymp \dots \asymp n$, so the whole set $C$ is an equivalence class. For each $i \in C$ the set of roots $\R_i^+$ consists of vectors 
$$
-q_i, -q_i + q_{i + 1}, -q_i + q_{i + 2} \dots, -q_i + q_n. 
$$
In particular, we have $|\R_1^+| = n$, so $\Umax \cong U_{n + 1, n}$ is the group $U_{n + 1}$. Of course, this also follows from the well-known fact $\Aut(\PP^n) = \PGL(n + 1)$.
\end{Example}

\begin{Example}
Let $X$ be as in Example~\ref{ex:CEx}. Then 
$$
\Umax = (U(3) \ltimes U(2)) \ltimes U(1) \cong (\GG_a \ltimes \GG_a) \ltimes \GG_a^2. 
$$
In fact, for this particular case $U(1, 2) \cong U_3$ and the subgroup $U(3)$ commutes with the subgroup $U(1, 2)$, so we have $\Umax = U(3) \times U(1, 2) \cong \GG_a \times U_3$.
\end{Example}

Finishing this section, let us describe a maximal unipotent subgroup $\Uss$ of the semisimple part of the group $\Aut(X)$ as a subgroup of $\Umax$. The roots of $\Uss$ are the semisimple roots in $\R^+$. For any subset $S \subseteq C$ denote by $\R(S)$ the set of roots~$\bigsqcup_{i \in S} \R_i$ and let $\R^+(S) = \R(S) \cap \R^+$. For each $1\le s\le r$ let $U^s \subseteq U(C_s)$ be the subgroup such that $\R(U^s) = \R^+(C_s) \cap \S$. By Theorem~\ref{th:MUS} there is a decomposition
$$
\Uss = \Big( \big( \cdots \big( U^r \ltimes U^{r - 1} \big) \dots \big) \ltimes U^2 \Big) \ltimes U^1.
$$

Denote by $U_{k, l}^{(1)}$ (resp. $U_{k, l}^{(2)}$) the subgroup of $U_{k, l}$ consisting of all matrices that differ from the unit matrix at most in the upper left $(l \times l)$-square (resp. at most in the upper right $(l \times (k - l))$-rectangle). Then $U_{k, l}=U_{k, l}^{(1)} \ltimes U_{k, l}^{(2)}$, the subgroup $U_{k, l}^{(1)}$ is isomorphic to $U_l$, and the subgroup $U_{k, l}^{(2)}$ is commutative.

\begin{Proposition}
\leavevmode
In the above notation, the following holds.
\begin{enumerate}
\item
$\Uss = U^1 \times \ldots \times U^r$.
\item
For each $1\le s\le r$ we have $U^s = U(C_s) \cong U_{k_s}$ if all roots in $\R(C_s)$ are semisimple, and $U^s \cong U_{k_s, l_s}^{(1)}\cong U_{l_s}$ otherwise.
\end{enumerate}
\end{Proposition}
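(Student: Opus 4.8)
The plan is to establish part~(2) first, since the direct-product decomposition in part~(1) follows readily once the structure of each factor is understood. Recall from Lemma~\ref{lm:PRS} that every $e \in \R^+$ has the form $e = -q_i + \sum_{j > i} \pairing{e}{p_j} q_j$, and such a root is semisimple precisely when it is basic ($e = -q_i$ with $q_i \in \R$) or elementary with $i \asymp j$; special roots are always unipotent. Fix an equivalence class $C_s = \{c, c+1, \dots, c_s\}$ with $c = c_{s-1}+1$. The key dichotomy is governed by whether $q_c \in \R$. By Lemma~\ref{lm:SSR} this is equivalent to $\R_c$ containing no special root, and by the translation bijection of Proposition~\ref{pr:RPO}.(2) the sets $\R_i$, $i \in C_s$, differ only by the shift $e \mapsto e - q_c + q_i$; hence $q_c \in \R$ if and only if $q_i \in \R$ for all $i \in C_s$, that is, if and only if every root in $\R(C_s)$ is semisimple.

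In the first case (all roots in $\R(C_s)$ semisimple), the set $\R_c^+$ consists exactly of $-q_c$ together with the elementary roots $-q_c + q_j$, $j \in C_s$, $j > c$, so $|\R_c^+| = |C_s| = l_s$ and therefore $k_s = l_s + 1$. Since $U_{k, k-1} = U_k$, Proposition~\ref{pr:ECG} gives $U^s = U(C_s) \cong U_{l_s + 1, l_s} = U_{k_s}$. In the second case every $-q_i$ with $i \in C_s$ is unipotent, so the semisimple roots of $\R_i^+$ are precisely the elementary roots $-q_i + q_j$ with $j \in C_s$, $j > i$. Under the isomorphism $\phi \colon U(C_s) \to U_{k_s, l_s}$ of Proposition~\ref{pr:ECG}, which sends $u_{-q_i+q_j}(\alpha)$ to $E + \alpha E_{i-c+1,\, j-c+1}$, these roots correspond exactly to the off-diagonal entries of the upper-left $(l_s \times l_s)$ block, i.e. to $U_{k_s, l_s}^{(1)} \cong U_{l_s}$. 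This identifies $U^s$ in both cases.

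For part~(1) I would show that the successive semidirect products collapse to a direct product by checking that distinct factors $U^s$ and $U^t$ commute. Take $s < t$, a root $e$ of $U^s$ with $e \in \R_i^+$, $i \in C_s$ (so $e$ is semisimple), and a root $e'$ of $U^t$ with $e' \in \R_j^+$, $j \in C_t$ (so $j > i$). Since $e$ is semisimple it equals $-q_i$ or $-q_i + q_{j'}$ with $j' \in C_s$; in either case $\pairing{e}{p_j} = 0$ because $j \notin C_s$. Thus the exponent $d = \pairing{e}{p_j}$ in Proposition~\ref{pr:Nor}.(2) vanishes, the conjugation of $u_e(\alpha)$ by $u_{e'}(\alpha')$ is trivial, and $U^t$ centralizes $U^s$. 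Consequently every semidirect product in the decomposition of $\Uss$ inherited from Theorem~\ref{th:MUS} is in fact direct, giving $\Uss = U^1 \times \dots \times U^r$.

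The main obstacle is the bookkeeping in part~(2): one must verify that the property ``$-q_i$ is semisimple'' is constant across an equivalence class, and then match the semisimple roots precisely with the block structure of $U_{k_s, l_s}$ furnished by Proposition~\ref{pr:ECG}. Once the roots are correctly located in the matrix realization, the isomorphisms $U^s \cong U_{k_s}$ or $U^s \cong U_{l_s}$ and the vanishing of $d$ needed for part~(1) are immediate from the explicit commutator formulas.
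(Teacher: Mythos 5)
Your route is the same as the paper's: part (2) rests on Lemma~\ref{lm:SSR} together with the explicit isomorphism $U(C_s)\cong U_{k_s,l_s}$ constructed in Proposition~\ref{pr:ECG}, and part (1) reduces to showing that distinct factors commute, which you do via the vanishing of $d=\pairing{e}{p_j}$ in Proposition~\ref{pr:Nor}.(2) — the paper phrases the same fact as ``the sum of two positive semisimple roots from different components is not a root''. In places you are more explicit than the paper (e.g.\ the count $|\R_c^+|=l_s$ behind the paper's ``$l_s=k_s-1$ by definition''). However, one step of your argument is genuinely flawed and needs repair.

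You assert that, by Lemma~\ref{lm:SSR}, the condition $q_c\in\R$ is ``equivalent to $\R_c$ containing no special root''. The lemma actually says $q_c\in\R$ iff \emph{all} roots of $\R_c$ are semisimple, and the two conditions are not equivalent: a basic root $-q_c$ can be unipotent even when $\R_c$ has no special root. Concretely, take the ray matrix
$$
A=\begin{pmatrix}1&1&1\\2&1&1\end{pmatrix},
$$
with classes $C_1=\{1\}$ and $C_2=\{2,3\}$. Here $\R_2=\{-q_2,\,-q_2+q_3\}$ contains no special root, yet $v_2=(1,1)^{\mathrm T}$ is not a standard basis vector, so $q_2\notin\R$ and $-q_2$ is unipotent; your paraphrase would put $C_2$ in the first case and yield $U^2\cong U_{k_2}=U_3$, whereas the correct answer is $U^2=U_{-q_2+q_3}\cong U_{l_2}=U_2\cong\GG_a$. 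The companion deduction that ``$q_c\in\R$ iff $q_i\in\R$ for all $i\in C_s$'' from the translation bijection of Proposition~\ref{pr:RPO}.(2) is circular: the bijection matches special roots with special roots and elementary with elementary, but whether it matches the basic root $-q_c$ with a basic root of the same (semisimple or unipotent) type is exactly the statement at stake. The class-constancy you need is true, but its correct justification is Proposition~\ref{pr:RMR}.(2): $q_i\in\R$ holds iff the column $v_i$ of the ray matrix is a standard basis vector, and $v_i=v_c$ for all $i\in C_s$ by definition of $\asymp$. (Note the paper's own proof silently uses the same fact when it says ``by Lemma~\ref{lm:SSR} for each $i\in C_s$ we have $-q_i\in\U$''.) With this one-line replacement your proof is complete.
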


\begin{proof}
\leavevmode
\begin{enumerate}
\item
Since we already have a semidirect product decomposition of $\Uss$, it suffices to prove that each two subgroups $U^s$ and $U^{s'}$, $s \ne s'$ commute with each other. It follows from Lemma~\ref{lm:MoR} and the facts that a positive semisimple root is either basic or elementary and the sum of two such roots from different components is not a root.

\item
If all roots in $\R(C_s)$ are semisimple, then $U^s = U(C_s) \cong U_{k_s, l_s}\cong U_{k_s}$, since $l_s = k_s - 1$ by definition. Assume that there are unipotent roots in $\R(C_s)$. By Lemma~\ref{lm:SSR} for each $i \in C_s$ we have $-q_i \in \U$ and due to the construction of the isomorphism $U(C_s) \cong U_{k_s, l_s}$ in the proof of Proposition~\ref{pr:ECG} we see that $U^s \cong U_{k_s, l_s}^{(1)}$.
\end{enumerate}
\end{proof}

We see that each subgroup $U^s$ is the group of all unitriangular matrices: it is isomorphic either to $U_{k_s}$ or to $U_{l_s}$. This is a manifestation of a famous theorem due to Demazure: the semisimple part of the automorphism group of a complete toric variety is a group of type~$A$; see~\cite[Proposition~3.3]{Dem}. The number of simple components in the semisimple part of the automorphism group $\Aut(X)$ equals the number of classes $C_s$ such that either $C_s$ contains at least two elements or the corresponding basic root $-q_i$ is semisimple. 

The center of $\Umax$ is described in Remark~\ref{rm:CU} below.


\section{Regular unipotent subgroups}
\label{sec4}

In this section we study regular unipotent subgroups of the automorphism group $\Aut(X)$ of a radiant toric variety $X$ that act on $X$ with an open orbit. We keep the notation of the previous sections.

\begin{definition} \label{def:Sat} 
We say that a subset of roots $\mathcal{A} \subseteq \R$ is \emph{saturated} with respect to a subset $\mathcal{B} \subseteq \R$ if for any $a \in \mathcal{A}$ and $b \in \mathcal{B}$ the sum $a + b$ is either contained in $\mathcal{A}$ or is not a root.
\end{definition}

Recall that $\R(\Umax)=\R^+$.

\begin{Lemma} \label{propus} 
A subset $\M \subseteq \R^+$ equals $\R(U)$ for some regular subgroup $U \subseteq \Umax$ if and only if for each $i \in C$ the subset $\M_i$ is saturated with respect to $\bigsqcup_{j > i} \M_j$.
\end{Lemma}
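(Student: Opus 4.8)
The plan is to prove both implications by relating the group-theoretic condition "$\M = \R(U)$ for a regular subgroup $U$" to the combinatorial saturation condition, using the semidirect-product structure established in Corollary~\ref{cr:SDG} and the explicit commutator formula of Proposition~\ref{pr:Nor}.(2). The key observation driving everything is that a regular unipotent subgroup $U$ is entirely determined by its set of roots $\R(U)$, and that $\R(U)$ must be closed under the "hidden" root additions coming from commutators: if $U_e, U_{e'} \subseteq U$ and $e + e'$ is a root, then $U_{e+e'} \subseteq U$ as well. Because we work inside $\Umax$ where every root has the form $e = -q_i + \sum_{j > i}\pairing{e}{p_j}q_j$ (Lemma~\ref{lm:PRS}), the only root-sums that can occur are of the type handled in Lemma~\ref{lm:RSu} and Proposition~\ref{pr:Nor}.(2): adding a root $e' \in \R_j^+$ with $j > i$ to a root $e \in \R_i^+$ produces $e + e' \in \R_i$. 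This is exactly why the saturation is asymmetric, taking $\M_i$ against $\bigsqcup_{j > i}\M_j$.

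First I would prove the forward direction. Suppose $\M = \R(U)$ for a regular subgroup $U \subseteq \Umax$; I must show each $\M_i$ is saturated with respect to $\bigsqcup_{j>i}\M_j$, i.e. for $e \in \M_i$ and $e' \in \M_j$ with $j > i$, if $e + e'$ is a root then $e + e' \in \M$. Here I invoke Proposition~\ref{pr:Nor}.(2): since $U_e, U_{e'} \subseteq U$ and $U$ is a group, the conjugate $u_{e'}(\alpha')^{-1}u_e(\alpha)u_{e'}(\alpha')$ lies in $U$ and equals $\prod_{k=0}^{d} u_{e+ke'}\big(\binom{d}{k}\alpha(\alpha')^k\big)$ with $d = \pairing{e}{p_j}$. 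When $e + e'$ is a root we have $d \geq 1$, so the factor $u_{e+e'}(d\,\alpha\alpha')$ appears; a standard argument (varying $\alpha, \alpha'$ and using that the factors live in distinct root subgroups $U_{e+ke'} \subseteq U(i)$, which intersect trivially by Proposition~\ref{pr:Nor}.(1) and the uniqueness of the decomposition $U_{e_1}\times\cdots\times U_{e_k}\to U$) forces $u_{e+e'}(\beta) \in U$ for all $\beta$, hence $e + e' \in \R(U) = \M$. Note $e + e' \in \R_i$ by Lemma~\ref{lm:RSu}.(1), and by Lemma~\ref{lm:PRS} it retains the $\R^+$-form, so $e + e' \in \M_i \subseteq \M$, as required.

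For the converse, suppose $\M \subseteq \R^+$ satisfies the saturation hypothesis; I must build a regular subgroup $U \subseteq \Umax$ with $\R(U) = \M$. The natural candidate is $U = U(\M) = \langle U_e \mid e \in \M\rangle$, and the content is to show $\R(U(\M)) = \M$, i.e. no extra roots are generated. Here I would argue by the semidirect-product filtration of Corollary~\ref{cr:SDG}: set $\M_{\geq i} = \bigsqcup_{j \geq i}\M_j$ and prove by downward induction on $i$ that $U(\M_{\geq i})$ is a genuine subgroup whose set of roots is exactly $\M_{\geq i}$. The base case $i = n$ is immediate since $\R_n^+ = \{-q_n\}$ (Remark~\ref{rm:MRS}) so $U(\M_n)$ is at most one-dimensional. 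For the inductive step I write $U(\M_{\geq i}) = U(\M_{\geq i+1}) \ltimes U(\M_i)$, which is a valid semidirect product provided $U(\M_{\geq i+1})$ normalizes $U(\M_i)$: that is precisely what Proposition~\ref{pr:Nor}.(2) guarantees, with the caveat that the conjugation formula can introduce the factors $u_{e+ke'}$. The saturation hypothesis is exactly what ensures every such $e + ke'$ that happens to be a root already lies in $\M_i$, so no new roots escape $\M$ and the product $U(\M_i) U(\M_{\geq i+1})$ is closed. Counting dimensions via the unique-decomposition isomorphism then gives $\R(U(\M_{\geq i})) = \M_{\geq i}$.

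I expect the main obstacle to be the bookkeeping in the converse: one must verify that the commutator factors $u_{e+ke'}$ for intermediate $1 \leq k \leq d-1$, as well as $k = d$, all stay within $\M$, not merely the single sum $e + e'$. The clean way around this is to observe that $e + ke' \in \R_i$ whenever it is a root (each $\pairing{e+ke'}{p_i} = -1$), and to note that $e + ke' = (e + (k-1)e') + e'$, so that inductively applying saturation to the root $e + (k-1)e' \in \M_i$ against $e' \in \M_j$ propagates membership up the chain; alternatively one verifies $e + ke'$ is a root iff $e + e'$ is, by the convexity/monotonicity of the conditions $\pairing{\cdot}{p_l}\geq 0$. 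Either formulation reduces the multi-term commutator to repeated use of the two-term saturation condition, closing the argument.
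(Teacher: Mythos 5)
Your proof is correct, but it takes a genuinely different, group-level route from the paper, which settles both directions at once on the Lie algebra level. The paper's proof is two lines: a subset $\M\subseteq\R^+$ equals $\R(U)$ for a regular subgroup $U\subseteq\Umax$ if and only if the linear span of the derivations $\partial_e$, $e\in\M$, is a Lie subalgebra of the tangent algebra of $\Umax$ (regularity makes $\Lie(U)$ a $T$-stable subspace, hence spanned by root vectors, and conversely a $T$-stable subalgebra exponentiates to a regular subgroup), and then Lemmas~\ref{lm:RSu} and~\ref{lm:PRS} translate closure under brackets, $[\partial_e,\partial_{e'}]=-\pairing{e}{p_j}\,\partial_{e+e'}$, directly into the saturation condition. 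You instead work with the exponentiated commutator formula of Proposition~\ref{pr:Nor}.(2). In the forward direction this costs you an isolation step that the Lie-algebra argument gets for free: to extract $U_{e+e'}\subseteq U$ from the membership of the conjugate $\prod_{k=0}^{d}u_{e+ke'}\big(\tbinom{d}{k}\alpha(\alpha')^k\big)$ in $U$, the ``standard argument'' you gesture at must actually be carried out (each coordinate is linear in $\alpha$, so the relevant subgroup of $U$ is closed under scaling, and a Vandermonde determinant in $\alpha'$ together with the nonvanishing of binomial coefficients in characteristic zero spans all of $\prod_{k\ge 1}U_{e+ke'}$); alternatively, $T$-stability of $\Lie(U)$ gives it immediately, which is exactly the paper's shortcut. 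In the converse, your downward induction re-establishes the semidirect filtration of Corollary~\ref{cr:SDG} under the saturation hypothesis, with the chain $e+ke'=(e+(k-1)e')+e'$ correctly reducing the multi-term conjugation formula to repeated two-term saturation, and the dimension count via the product decomposition finishing $\R(U(\M))=\M$. What your approach buys is explicitness: it avoids the correspondence between regular unipotent subgroups and $T$-stable subalgebras and exhibits the semidirect-product structure of $U(\M)$ as a byproduct; what the paper's buys is brevity. One small inaccuracy to fix: your parenthetical alternative, that $e+ke'$ is a root if and only if $e+e'$ is, fails as stated (for $k>d$ one has $\pairing{e+ke'}{p_j}<0$); this is harmless, since your primary chain argument is the correct formulation and is all you need.
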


\begin{proof}
Observe that a subset $\M \subseteq \R^+$ equals $\R(U)$ for some regular subgroup $U \subseteq \Umax$ if and only if the linear span of derivations $\partial_e$, $e\in\M$ is a Lie subalgebra in the tangent algebra of the group $\Umax$. By Lemmas~\ref{lm:RSu} and~\ref{lm:PRS}, this is the case if and only if for all $e\in \M_i$ and $e'\in \M_j$, $j> i$ with $\langle e,p_j\rangle>0$ we have $e+e'\in\M_i$.
\end{proof}

\begin{definition}
A \emph{principal unipotent subgroup} of the automorphism group $\Aut(X)$ of a radiant toric variety $X$ is a regular commutative unipotent subgroup that acts on $X$ with an open orbit.
\end{definition}

The action of a subgroup of $\Aut(X)$ on $X$ is faithful, so a principal unipotent subgroup in $\Aut(X)$ is precisely a subgroup that defines a normalized additive action on $X$. Since every two such actions are equivalent (see \cite[Theorem 3.6]{AR}), every two principal unipotent subgroups in $\Aut(X)$ are conjugate. As we have seen in Subsection~\ref{rss}, $U(\{-q_1, \ldots, -q_n\})$ is an example of a principal unipotent subgroup.

\begin{Theorem} \label{Th:PU} 
Let $X$ be a radiant toric variety. A regular unipotent subgroup $U$ in $\Umax$ with $\M=\R(U)$ acts on $X$ with an open orbit if and only if $U$ contains a principal unipotent subgroup. More precisely, the subgroup $U$ acts on $X$ with an open orbit if and only if $-q_1,\ldots,-q_n\in\M$. 
\end{Theorem}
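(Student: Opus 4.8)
The plan is to prove the statement in two stages: a geometric stage that rephrases the open-orbit condition as a rank computation on the Cox quotient, and a combinatorial stage that uses saturation to upgrade ``every $\R_i^+$ meets $\M$'' to ``every basic root lies in $\M$''. Throughout write $\M_i = \M \cap \R_i^+$, and recall that $\M = \R(U)$ is saturated in the sense of Lemma~\ref{propus}. First I would work on $\hat X = \KK^m \setminus Z$ with the commuting $U$- and $G$-actions and the quotient $\pi\colon \hat X \to X = \hat X /\!/ G$, fixing a point $\hat x$ in the big torus $(\KK^\times)^m = \pi^{-1}(T)$, where all coordinates of $\hat x$ are nonzero and $G$ acts freely. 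For $e \in \R_i^+$ the derivation $\partial_e = x^{\theta(e)}\,\partial/\partial x_i$ has nonzero value at $\hat x$ proportional to $\partial/\partial x_i$, and by Lemma~\ref{lm:PRS} every root of $\R^+$ lies in some $\R_i$ with $i \le n$. Hence the tangent space to the $U$-orbit through $\hat x$ is $\mathrm{span}\{\partial/\partial x_i : \M_i \ne \emptyset\}$. Dualizing~\eqref{eq:M-Cl}, the tangent space to the $G$-orbit at $\hat x$ is $\{(\mu_l)_l : \sum_l (\mu_l/x_l)\,p_l = 0\}$, and since $p_1, \dots, p_n$ form a basis of $N$ this subspace meets $\mathrm{span}\{\partial/\partial x_1, \dots, \partial/\partial x_n\}$ only in $0$. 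Thus $d\pi_{\hat x}$ is injective on the $U$-orbit tangent space, the generic $U$-orbit in $X$ has dimension $\#\{i : \M_i \ne \emptyset\}$, and $U$ acts with an open orbit if and only if $\M_i \ne \emptyset$ for every $i \in C$.

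Next I would show that, under saturation, the condition $\M_i \ne \emptyset$ for all $i$ already forces $-q_i \in \M$ for all $i$, by downward induction on $i$. The base $i = n$ is immediate since $\R_n^+ = \{-q_n\}$ by Remark~\ref{rm:MRS}. For the step, given $-q_j \in \M$ for all $j > i$ and any $e = -q_i + \sum_{j>i} b_j q_j \in \M_i$ with some $b_{j_0} > 0$, I apply Lemma~\ref{lm:RSu} to $e \in \R_i$ and $-q_{j_0} \in \R_{j_0} \cap \M$: since $\langle -q_{j_0}, p_i\rangle = 0$ and $d = \langle e, p_{j_0}\rangle = b_{j_0} > 0$, the sum $e - q_{j_0}$ is a root in $\R_i$ and therefore lies in $\M_i$ by saturation. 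Because the \emph{basic} root $-q_{j_0}$ contributes no new $q_j$-terms, this step merely decrements $b_{j_0}$ and leaves the other coefficients unchanged, so iterating drives all $b_j$ to $0$ and lands on $-q_i \in \M$.

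Assembling the two stages yields the precise statement: open orbit $\iff \M_i \ne \emptyset$ for all $i \iff -q_1, \dots, -q_n \in \M$ (left-to-right of the last equivalence is the induction, right-to-left is trivial as $-q_i \in \R_i$). For the principal-subgroup formulation I would observe that $-q_1, \dots, -q_n$ pairwise commute, since all pairwise values $\langle -q_i, p_j\rangle$ vanish and Lemma~\ref{lm:RSu} then gives trivial brackets, and that $U(\{-q_1, \dots, -q_n\})$ is exactly the normalized additive action of Subsection~\ref{rss}, hence a principal unipotent subgroup. So $-q_i \in \M$ for all $i$ is equivalent to $U(\{-q_1, \dots, -q_n\}) \subseteq U$; thus an open orbit forces $U$ to contain a principal subgroup, while conversely if $U$ contains any principal subgroup $P$ then $U \cdot x \supseteq P \cdot x$ is already open.

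The main obstacle I anticipate is precisely the second stage. At first glance the relations of Lemma~\ref{lm:RSu} only produce \emph{sums} $e + e'$ of roots and so appear to move \emph{upward}, away from the basic roots, making it unclear how $\M_i \ne \emptyset$ could ever produce $-q_i$. The resolution is that adding a basic root $-q_{j_0}$ cancels a $q_{j_0}$ and hence moves \emph{downward}; the delicacy is to bracket only against basic roots (so that no higher-index terms are reintroduced) and to run the reduction in order of decreasing index, guaranteeing that each $-q_{j_0}$ needed is already known to lie in $\M$.
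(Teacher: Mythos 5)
Your proposal is correct and follows essentially the same route as the paper: the open-orbit condition is reduced to ``$\M_i \neq \emptyset$ for every $i$'' by a dimension/tangent-space count on the Cox quotient, the basic roots are then recovered by downward induction using Lemma~\ref{lm:RSu} together with the saturation property of $\R(U)$, and the converse is the normalized additive action $\partial_{-q_1},\dots,\partial_{-q_n}$ from Subsection~\ref{rss}. Your first stage is somewhat more detailed than the paper's (an explicit computation of $T_{\hat x}(U\hat x)$ and $\ker d\pi_{\hat x}$ at a point over the big torus, versus the paper's quick observation that orbits in $\Spec R(X)$ fix the coordinates $x_i, x_{n+1},\dots,x_m$ when $\M_i=\emptyset$), but the underlying idea is identical.
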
 

\begin{proof}
If a subgroup $U(\M)$ acts on $X$ with an open orbit, then the set $\M_i$ is nonempty for each~$i$. Indeed, if such intersection is empty, then any orbit of the group $U(\M)$ on the spectrum of the Cox ring $R(X)$ is contained in the subvariety, where the coordinates $x_i,x_{n+1},\ldots,x_m$ are constant. So, the dimension of any orbit is less than $n$, and it can not project to an open orbit on $X$. 

Let us prove by induction that $-q_1, \ldots, -q_n \in \M$. By Remark~\ref{rm:MRS}, we have $-q_n \in \M$. Assuming for some $i<n$ that $-q_{i+1},\ldots,-q_n\in \M$, we take an arbitrary element of $\M_i$, say $-q_i + \sum_{j > i} b_j q_j$, and apply consecutively Lemma~\ref{lm:RSu} for each $e'=-q_{i+1}, \ldots, -q_n$. Then $-q_i \in \M$, and the direct implication follows. 

In Subsection~\ref{rss} we have already seen that locally nilpotent derivations $\partial_{-q_1}, \dots, \partial_{-q_n}$ give rise to an additive action on $X$, so the inverse implication is straightforward.
\end{proof} 

This fact and the results of~\cite{D2} motivate the following definition.

\begin{definition}
We say that a radiant toric variety $X$ is of \emph{Type I}, if a maximal unipotent subgroup in $\Aut(X)$ is commutative. All other radiant toric varieties are assigned to \emph{Type~II}.
\end{definition}

In Type~I, the maximal unipotent subgroup in $\Aut(X)$ is a principal unipotent subgroup. By \cite[Corollary~4]{D2}, radiant toric varieties of Type~I are precisely complete toric varieties that admit a unique additive action. On the other hand, Type~II consists exactly of complete toric varieties that admit a faithful action of a non-commutative unipotent group with an open orbit. 

\smallskip

Clearly, the direct product $X_1\times\ldots\times X_s$ of radiant toric varieties is again a radiant toric variety, and it is of Type~I if and only if all factors $X_i$ are of Type~I. Let us give one more result in this direction.

\begin{Proposition}
Every radiant toric variety of Type~I admits a unique decomposition ${X \cong (\PP^1)^b \times Y}$, where $b$ is a non-negative integer and $Y$ is a radiant toric variety of Type~I such that
the connected component $\Aut(Y)^0$ is solvable.
\end{Proposition}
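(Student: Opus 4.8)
The plan is to translate both hypotheses into the combinatorics of Demazure roots. Since $\Aut^0(X)=G_u\rtimes G_{\text{red}}$ with $G_u$ unipotent and $\S$ the root system of the reductive group $G_{\text{red}}$, and a connected reductive group is solvable exactly when it is a torus, the condition that $\Aut(Y)^0$ is solvable is equivalent to $\S(Y)=\emptyset$. First I would describe Type~I intrinsically. By Proposition~\ref{pr:Nor}(2) the subgroups $U(i)$ and $U(j)$ with $i<j$ commute if and only if $\pairing{e}{p_j}=0$ for every $e\in\R_i^+$; since $-q_j\in\R_j^+$ the set $\R_j^+$ is always nonempty, so commutativity of $\Umax$ forces $\pairing{e}{p_j}=0$ for all $j>i$, and Lemma~\ref{lm:PRS} then gives $\R_i^+=\{-q_i\}$ for every $i$. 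Thus $X$ is of Type~I if and only if $\Umax\cong\GG_a^n$. In particular no positive elementary root $-q_i+q_j$ occurs, so every $\asymp$-class is a singleton and there are no semisimple elementary roots; by Lemma~\ref{lm:SSR} the semisimple roots are then exactly the pairs $\{-q_i,q_i\}$ with $q_i\in\R$, equivalently with the column $v_i$ of the ray matrix $A$ a standard basis vector.

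Next I would prove existence by splitting off one $\PP^1$ per detached basic root. Fix $i$ with $q_i\in\R$, so by Proposition~\ref{pr:RMR}(2) the column $v_i$ has its single $1$ in position $k-n$ for some $k>n$. If $a_{kj}>0$ for some $j\ne i$, then $v_j\ge v_i$, so $-q_j+q_i\in\R_j$ by Proposition~\ref{pr:RPO}(1); as the $\asymp$-classes are singletons this root is unipotent, hence lies in $\R^+$ and therefore in $\R_j^+=\{-q_j\}$, a contradiction. So row $k-n$ of $A$ has its unique nonzero entry (a $1$) in position $i$, that is $p_k=-p_i$, and since $v_i$ is a standard basis vector $p_i$ occurs in no other ray. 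Hence $N=\ZZ p_i\oplus N'$ is decoupled: a full-dimensional cone must contain a ray with nonzero $p_i$-component, i.e. exactly one of $\pm p_i$, and then splits as $\mathrm{cone}(\pm p_i)$ together with a cone in $N'$. This gives $X\cong\PP^1\times X'$. The detached basic roots use pairwise distinct rows and columns, so iterating peels off $b$ such factors and yields $X\cong(\PP^1)^b\times Y$ whose ray matrix has no standard basis column; thus $\S(Y)=\emptyset$, $\Aut(Y)^0$ is solvable, and $Y$ is of Type~I as a factor of a Type~I variety.

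For uniqueness I would use that the Demazure roots of a product of complete toric varieties are the disjoint union of those of the factors: completeness of one factor forces the corresponding component of any root to vanish. Consequently $\S(X)=\S((\PP^1)^b)\sqcup\S(Y)$ has exactly $2b$ elements, so $b=|\S(X)|/2$ is an invariant of $X$; and in any decomposition of the required form the $\PP^1$-factors must account for all of $\S(X)$, hence their rays are precisely the intrinsic pairs $\{\pm p_i:q_i\in\R\}$. Their span $N_1$ and the span $N_2$ of the remaining rays give $N=N_1\oplus N_2$ and a product fan $\Sigma=\Sigma_1\times\Sigma_2$ with $\Sigma_1$ the fan of $(\PP^1)^b$; since $N_1$, $N_2$ and $\Sigma_2$ are recovered from $X$ alone, the factor $Y\cong X(\Sigma_2)$ is determined up to isomorphism. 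I expect the main difficulty to be the splitting step of the second paragraph --- checking that a decoupled pair $\pm p_i$ yields an honest product of fans and not merely a decomposition of the support --- together with the parallel point in uniqueness; both amount to the unique factorization of a complete fan into indecomposable product factors, which I would either establish through the cone-splitting argument indicated above or cite as a known property.
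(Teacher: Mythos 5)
Your proposal is correct and takes essentially the same approach as the paper: the paper likewise reduces Type~I to "only basic and detached roots," shows that a detached root $q_i$ forces the corresponding column \emph{and} row of the ray matrix to be standard basis vectors (via the same elementary-root contradiction), splits off $\PP^1$-factors one at a time as a fan decomposition $\Sigma=\Sigma_1\oplus\Sigma_2$, and proves uniqueness by reading off $b$ from the pairs of semisimple roots and recovering the fan of $Y$ as the subfan lying in the span of the rays carrying no semisimple root. The fan-splitting step you flag as the main remaining difficulty is asserted without further detail in the paper as well, so your sketch is at least as complete as the published argument.
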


\begin{proof}
Since $X$ is of Type~I, besides basic roots $-q_1, \ldots, -q_n$ the set $\R$ may contain only some detached roots $q_i$.

Assume that $q_1 \in \R_{n + 1}$. By Proposition~\ref{pr:RMR}.(2), in the ray matrix $A$ we have $a_{n + 1 \, 1} = 1$ and $a_{k \, 1} = 0$ for all $n+2\le k\le m$. If $a_{n + 1\, s} > 0$ for some $2\le s\le n$, then by Proposition~\ref{pr:RMR}.(1) we obtain $-q_s + q_1 \in \R_s$, a contradiction. So, we have $a_{n + 1 \, s} = 0$ for all $2\le s\le n$.

It means that $\Sigma = \Sigma_1 \oplus \Sigma_2$, where $\Sigma_1$ is the fan generated by $p_1$ and $-p_1$ and $\Sigma_2$ is a complementary subfan. It implies that $X \cong \PP^1 \times X(\Sigma_2)$. Repeating this procedure, we come to the decomposition $X \cong (\PP^1)^b \times Y$, where the fan of the toric variety $Y$ has no detached roots. We conclude that $\Aut(Y)^0 \cong T \ltimes U$, where $T$ is the acting torus of $Y$ and $U$ is a principal unipotent subgroup of $\Aut(Y)$. In particular, the group $\Aut(Y)^0$ is solvable.

In any decomposition of this form, $b$ is the number of pairs of semisimple roots of $\Sigma$, and the fan of $Y$ is the subfan of $\Sigma$ obtained as an intersection with the subspace generated by the vectors $p_i$ such that $\R_i$ contains no semisimple root. This shows that the decomposition is unique.
\end{proof}

Let us give an example of a smooth non-projective radiant toric variety $X(\Sigma)$ of dimension~$n$ for each $n \geq 3$; see \cite{Sha} for details.

\begin{Example}
\label{exsha}
The set of primitive vectors on the rays of $\Sigma$ consists of a basis $p_1, \dots, p_n$ of the lattice $N$, the vector $w = -p_1 - \ldots - p_n$, and the vectors $u_i = p_i + w$, $i \in C = \{1, \dots, n\}$. The maximal cones of $\Sigma$ are constructed as follows. For each integer $k$ denote by $p_k$ and $u_k$ the vectors $p_i$ and $u_i$ correspondingly, where $k$ and $i$ are equal modulo $n$ and $i \in C$. Now for each $i \in C$ write a sequence of vectors
$$
p_i, p_{i+1}, \dots, p_{i + n - 2}, u_i, u_{i+1}, \dots, u_{i + n - 2}, w.
$$
For each (continuous) segment $S$ of $n$ elements of this sequence, the cone generated by elements of $S$ lies in $\Sigma$. Lastly, $\Sigma$ also contains the cone generated by $p_1, \ldots, p_n$.

The toric variety $X(\Sigma)$ is of Type I. Indeed, the ray matrix $A$ equals 
$$
\begin{pmatrix}
0 & 1 & \dots & 1 \\
1 & 0 & \dots & 1 \\
\vdots &  & \ddots & \vdots \\
1 & 1 & \dots & 0 \\
1 & 1 & \dots & 1 \\
\end{pmatrix},
$$
so each two columns of $A$ are incomparable. In the case $n = 3$ the orbit structure of the action of $\Umax$ on $X$ is described in \cite{Sha}. This description shows that the number of orbits of a maximal unipotent subgroup $\Umax$ on a radiant toric variety can be infinite.
\end{Example}

\begin{Problem}
Characterize bilateral fans $\Sigma$ such that the action $\Umax \curvearrowright X(\Sigma)$ has a finite number of orbits.
\end{Problem}

Since a closed subvariety of a projective variety is projective, the direct product of complete varieties is projective if and only if each factor is projective. This shows that the product $X \times \PP^2$, where $X$ is a variety from Example~\ref{exsha}, is an example of a smooth non-projective radiant toric variety of Type~II.

\smallskip

Now we come to an algorithm that allows to construct all regular unipotent subgroups in $\Aut(X)$ that act on $X$ with an open orbit.

\begin{Algorithm} \label{alg}
We sequentially construct a subset $\M \subseteq \R^+$, which is the set of roots of a regular unipotent subgroup of $\Aut(X)$. For this we produce subsets $\M_i \subseteq \R_i^+$ step by step starting from $i = n$ down to $i = 1$. By Theorem~\ref{Th:PU} we start with $\M_n = \R_n^+ = \{-q_n\}$.

Assume that we constructed subsets $\M_{i + 1}, \M_{i + 2}, \dots, \M_n$ for some $i\in C$ such that for each $j > i$ the set $\M_j$ is saturated with respect to $\bigsqcup_{k > j} \M_k$. Let us construct the set $\M_i$. Once again by Theorem~\ref{Th:PU} we have to start with $\M_i = \{-q_i\}$. Choose some subset of roots $E \subseteq \R_i^+$ that we wish to add to $\M_i$. Since we want to preserve the condition of saturation, we also need to add to $\M_i$ all sums of the form $e + \sum_s e_s$, where $e \in E$, $e_s \in \bigsqcup_{j > i} \M_j$, and $e + \sum_s e_s \in \R_i^+$. After that we move to $\M_{i - 1}$. In short, we iterate over the collection of subsets of $\R_i^+$ which are saturated with respect to $\bigsqcup_{k > i} \M_k$ and recursively repeat the procedure for $\M_{i - 1}$.

Assume that we constructed $\M_1, \dots, \M_n$. Let $\M = \M_1 \sqcup \ldots \sqcup \M_n$. By Theorem~\ref{Th:PU} there exists a regular unipotent subgroup $U \subseteq \Aut(X)$ such that $\R(U) = \M$ and $U$ acts on $X$ with an open orbit. All regular unipotent subgroups that act on $X$ with an open orbit are obtained this way for a suitable choice of intermediate subsets $\M_i$.
\end{Algorithm}

\begin{Example} \label{ex:P123}
Let $n = 3$, $m = 4$, and $p_4 = (-3, -2, -1)$. In this case $X$ is a weighted projective space $\PP(1,2,3)$. We have $1 \succ 2 \succ 3$ and
$$
\R_3^+ = \{ -q_3 \},
$$
$$
\R_2^+ = \{ -q_2, \ -q_2 + q_3, \ -q_2 + 2 q_3 \},
$$
$$
\R_1^+ = \{ -q_1, \ -q_1 + q_2, \ -q_1 + q_2 + q_3, \ -q_1 + q_3, \ -q_1 + 2 q_3, \ -q_1 + 3 q_3 \}.
$$
By Corollary~\ref{cr:SDG}, we obtain $\Umax \cong \big( \GG_a \ltimes \GG_a^3 \big) \ltimes \GG_a^6$.

Let us apply Algorithm~\ref{alg}. We start with $\M_3 = \{ -q_3 \}$.  The following cases are possible for~$\M_2$.

\begin{enumerate}
\item If $\M_2 = \{ -q_2 \}$, then a subset $\M_1 \subseteq \R_1^+$ satisfies the constraint
$$
-q_1 + b_2 q_2 + b_3 q_3 \in \M_1 \implies
$$
$$
-q_1 + b'_2 q_2 + b'_3 q_3 \in \M_1 \ \text{for all} \ b'_2 \leq b_2, \ b'_3 \leq b_3.
$$
This case gives 11 subgroups: one of dimension 3, two of dimension 4, two of dimension~5, three of dimension 6, two of dimension 7, and one of dimension 8.

\item If $\M_2 = \{ -q_2, -q_2 + q_3 \}$, then $\M_1 \subseteq \R_1^+$ satisfies the constraint from the first case and two additional ones:
$$
-q_1 + q_2 \in \M_1 \implies -q_1 + q_3 \in \M_1,
$$
$$
-q_1 + q_2 + q_3 \in \M_1 \implies -q_1 + 2 q_3 \in \M_1.
$$
This case gives 9 subgroups: two for each of dimensions 6, 7, 8 and one for each of dimensions 4, 5, and 9.

\item If $\M_2 = \R_2^+$, then there are two additional constraints on $\M_1 \subseteq \R_1^+$:
$$
-q_1 + q_2 \in \M_1 \implies -q_1 + 2 q_3 \in \M_1,
$$
$$
-q_1 + q_2 + q_3 \in \M_1 \implies -q_1 + 3 q_3 \in \M_1.
$$
This case gives 7 subgroups: two of dimension 8 and one for each of dimensions 5, 6, 7, 9, and 10.
\end{enumerate}
We conclude that there are 27 regular unipotent subgroups in $\Umax$ that act on $\PP(1, 2, 3)$ with an open orbit.
\end{Example}


\section{Centers and central series}
\label{sec5}

We proceed with a computation of the center of a regular unipotent subgroup. Let $U$ be a regular unipotent subgroup of $\Aut(X)$ that acts on $X$ with an open orbit. Denote 
$$
C(U) = \{i \in C \mid \pairing{e}{p_i} \le 0 \ \forall e \in \R(U)\}.
$$

\begin{Proposition}
The center $Z(U)$ of $U$ equals $\prod_{i\in C(U)}U_{-q_i}$. In particular, $Z(U)$ is contained in a principal unipotent subgroup.
\end{Proposition}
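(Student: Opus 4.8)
The plan is to pass from the group $U$ to its tangent Lie algebra. Over a field of characteristic zero a unipotent group is connected, the exponential map $\exp\colon\Lie(U)\to U$ is an isomorphism of varieties, and it carries the center of the Lie algebra onto the (connected) center $Z(U)$; so it suffices to compute $Z(\Lie U)$. By the proof of Lemma~\ref{propus} the tangent algebra of $U$ is $\bigoplus_{e\in\M}\KK\partial_e$, where $\M=\R(U)$, with bracket governed by Lemma~\ref{lm:RSu}. Since $T$ normalizes $U$ it normalizes $Z(U)$, so $Z(\Lie U)$ is a $T$-submodule; as the $\partial_e$ are eigenvectors of pairwise distinct $T$-weights $e\in M$, the center is automatically a direct sum of some of the lines $\KK\partial_e$. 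Thus the whole problem reduces to deciding, for each $e\in\M$, whether $\partial_e$ is central.

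First I would treat the basic roots. Fix $i\in C$. For $e'\in\M_j$ with $j>i$ the normal form of Lemma~\ref{lm:PRS} gives $\pairing{e'}{p_i}=0$, and since $\pairing{-q_i}{p_j}=0$ Lemma~\ref{lm:RSu} yields $[\partial_{-q_i},\partial_{e'}]=0$; for $e'\in\M_i$ the bracket vanishes because $U(i)$ is commutative by Proposition~\ref{pr:Nor}.(1). Hence $\partial_{-q_i}$ is central if and only if $[\partial_{e'},\partial_{-q_i}]=0$ for every $e'\in\M_j$ with $j<i$, and by Lemma~\ref{lm:RSu} this bracket equals $-\pairing{e'}{p_i}\,\partial_{e'-q_i}$, so it vanishes exactly when $\pairing{e'}{p_i}=0$. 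Since $\pairing{e'}{p_i}\ge 0$ for every root $e'\notin\R_i$, this is precisely the defining condition $i\in C(U)$. This already shows $U_{-q_i}\subseteq Z(U)$ for each $i\in C(U)$, hence $\prod_{i\in C(U)}U_{-q_i}\subseteq Z(U)$.

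The opposite inclusion is the crux: I must rule out every other $\partial_e$, i.e. show that a non-basic root $e\in\R_i^+$ is never central. Writing $e=-q_i+\sum_{j>i}\pairing{e}{p_j}q_j$, non-basicity produces an index $j_0>i$ with $d:=\pairing{e}{p_{j_0}}>0$. Because $U$ acts with an open orbit, $-q_{j_0}\in\M$ by Theorem~\ref{Th:PU}, and $\pairing{-q_{j_0}}{p_i}=0$, so Lemma~\ref{lm:RSu}.(1) gives $[\partial_e,\partial_{-q_{j_0}}]=-d\,\partial_{e-q_{j_0}}$ with $e-q_{j_0}\in\R_i$. As distinct roots yield linearly independent derivations, this bracket is nonzero, so $\partial_e$ is not central. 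Together with the basic-root analysis this gives $Z(\Lie U)=\bigoplus_{i\in C(U)}\KK\partial_{-q_i}$.

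Finally, since these derivations pairwise commute, $\exp$ turns their span into the commuting product $\prod_{i\in C(U)}U_{-q_i}$, which is therefore exactly $Z(U)$; and as $\{-q_i:i\in C(U)\}\subseteq\{-q_1,\dots,-q_n\}$, this sits inside the principal unipotent subgroup $U(\{-q_1,\dots,-q_n\})$, giving the last assertion. I expect the main obstacle to be the conceptual reduction at the start — justifying that $Z(U)$ is the exponential of the $T$-graded Lie-algebra center, so that centrality can be tested line by line — after which everything is the purely combinatorial bracket computation above, whose only delicate point is invoking the normal form of Lemma~\ref{lm:PRS} to secure the pairing hypotheses needed to apply Lemma~\ref{lm:RSu}.
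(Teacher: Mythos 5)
Your proof is correct, and its skeleton is the same as the paper's: pass to the tangent algebra $\u = \Lie(U)$, use $\Lie(Z(U)) = \mathfrak{z}(\u)$, and compute brackets via Lemma~\ref{lm:RSu}, with Theorem~\ref{Th:PU} supplying the basic roots $-q_1, \dots, -q_n \in \R(U)$. The genuine difference is in how the inclusion $\mathfrak{z}(\u) \subseteq \langle \partial_{-q_i} \mid i \in C(U) \rangle$ is finished. The paper first kills all non-basic coefficients of a general central element by bracketing against the $\partial_{-q_i}$ (your treatment of non-basic roots is the same computation), and then rules out $\alpha_{-q_k}$ for $k \notin C(U)$ by bracketing against an \emph{elementary} root $-q_j + q_k \in \R(U)$, citing Lemma~\ref{lm:er}. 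That step is slightly delicate: Lemma~\ref{lm:er} only produces $-q_j + q_k$ as a root of the fan, and one still needs the saturation property of $\R(U)$ (Lemma~\ref{propus}, applied repeatedly together with the fact that all $-q_l$ lie in $\R(U)$) to see that this elementary root actually belongs to $\R(U)$. Your argument sidesteps this entirely: for $i \notin C(U)$ you bracket $\partial_{-q_i}$ directly against the witness root $e' \in \R(U)$ with $\pairing{e'}{p_i} > 0$, getting $-\pairing{e'}{p_i}\,\partial_{e' - q_i} \neq 0$ by Lemma~\ref{lm:RSu}.(1), so no elementary roots and no saturation are needed. Combined with your $T$-weight reduction (which replaces the paper's coefficient bookkeeping and is legitimate because the roots of $U$ are pairwise distinct characters of $T$, so every $T$-submodule of $\u$ is a sum of the lines $\KK\partial_e$), this yields a proof that is, if anything, more self-contained than the paper's own.
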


\begin{proof}
Consider the center $\mathfrak{z} (\u)$ of the tangent algebra $\u = \Lie (U)$. 
Let $\delta = \sum_{e \in \R(U)} \alpha_e \partial_e \in \mathfrak{z} (\u)$. 
Then by Lemma~\ref{lm:RSu} for all $i \in C$ we have
$$
0 = [\delta, \partial_{-q_i}] = \sum_{e : \pairing{e}{p_i} > 0} -\alpha_e\pairing{e}{p_i} \partial_{e - q_i}.
$$
So, $\alpha_e = 0$ for any non-basic $e$, hence $\delta = \sum_{i = 1}^n \alpha_{-q_i} \partial_{-q_i}$. Furthermore, if $-q_j + q_k \in \R(U)$ for some $j \ne k$, then by Lemma~\ref{lm:RSu}
\[
0 = [\delta, \partial_{-q_j + q_k}] = [\alpha_{-q_k} \partial_{-q_k}, \partial_{-q_j + q_k}] = \alpha_{-q_k} \pairing{-q_j + q_k}{p_k} \partial_{-q_j} = \alpha_{-q_k} \partial_{-q_j}.
\]
By Lemma~\ref{lm:er},  we have $\delta = \sum_{i \in C(U)} \alpha_i \partial_{-q_i}$. Conversely, every $\delta \in \u$ of such form lies in $\mathfrak{z}(\u)$. 
We conclude with the fact that $\Lie(Z(U)) = \mathfrak{z}(\u)$.
\end{proof}

\begin{Remark} \label{rm:CU} 
If $U = \Umax$, then $C(U)$ is the set of minimal numbers of classes $C_s$ that are maximal with respect to the partial preorder $\succeq$.
\end{Remark}

In the rest of this section we describe the lower and upper central series of a regular unipotent subgroup $U$ in terms of a directed graph on the set of roots of $U$.

\begin{definition}
Given a regular unipotent subgroup $U \subseteq \Umax$ with $\M = \R(U)$, we define a directed graph $\Gamma(\M)$ with $\M$ as the set of vertices as follows. There is an arrow from a root $a \in \M$ to a root $b \in \M$ if and only if there exists $e \in \M$ such that $b = a + e$.

Let $k$ be a non-negative integer. We denote by $\M^{\uparrow k}$ the subset of roots $b$ in $\M$ such that there is a path of length $k$ in $\Gamma(\M)$ ending in $b$. Further, let us denote by $\M^{\downarrow k}$ the subset of roots $a$ in $\M$ such that any path in $\Gamma(\M)$ starting in $a$ is of length less than $k$.
\end{definition}

\begin{Remark} \label{rm:GA} 
If there is an arrow from $a \in \M_i$ to $b = a + e \in \M_j$, then $i \ge j$ and $[\partial_a, \partial_e] = d \partial_b$ for some $d \neq 0$ by Lemma~\ref{lm:RSu}.
\end{Remark}

We claim that the graph $\Gamma(\M)$ is acyclic. Assume to the contrary that there is a cycle $S = (b_0 = a, b_1, \dots, b_{l - 1}, b_l = a)$. Then each vertex of this cycle lies in the same $\M_i$. For each $1\le k\le l$ denote $e_k = b_k - b_{k - 1}$. We have $a = a + \sum_{k = 1}^l e_k$ or, equivalently, $\sum_{k = 1}^l e_k = 0$. This is a contradiction, since due to Lemma~\ref{lm:PRS} the sum of positive roots cannot be zero.

\begin{Proposition} \label{pr:CS} 
Let $U$ be a regular unipotent subgroup and $l$ be the length of the longest path in $\Gamma(\M)$, where $\M = \R(U)$. Then the lower (resp. upper) central series of $U$ consists of subgroups $U(\M^{\uparrow k})$ (resp. $U(\M^{\downarrow k})$) for $0\le k\le l + 1$ in this order.
\end{Proposition}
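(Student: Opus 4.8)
The plan is to pass from the group $U$ to its tangent Lie algebra $\u = \Lie(U)$, which has the derivations $\partial_e$, $e \in \M$, as a basis. Since the ground field has characteristic zero and $U$ is unipotent, the exponential map identifies $U$ with $\u$ compatibly with the group structure, so the lower central series $\gamma_k(U)$ and the upper central series $Z_k(U)$ correspond term by term, under $\Lie$, to the series $\gamma_k(\u)$ and $\mathfrak{z}_k(\u)$ of the Lie algebra; moreover, for a saturated set $S$ the subgroup $U(S)$ is exactly the connected subgroup with Lie algebra $\mathrm{span}\{\partial_e : e \in S\}$. Thus it suffices to prove the two identities $\gamma_{k+1}(\u) = \mathrm{span}\{\partial_b : b \in \M^{\uparrow k}\}$ and $\mathfrak{z}_k(\u) = \mathrm{span}\{\partial_a : a \in \M^{\downarrow k}\}$ at the level of Lie algebras.

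The computation rests on one structural observation, essentially Remark~\ref{rm:GA}: for $a, f \in \M$ the bracket $[\partial_a, \partial_f]$ is either zero or a nonzero scalar multiple of $\partial_{a+f}$, and it is nonzero precisely when $a + f \in \M$, i.e.\ precisely when $\Gamma(\M)$ has an arrow $a \to a + f$. This follows from Lemma~\ref{lm:RSu} together with the normal form of positive roots in Lemmas~\ref{lm:PRS} and~\ref{lm:RSt} (which forces $\langle a, p_j\rangle = 0$ whenever $a \in \R_i^+$ and $j < i$), and from saturation of $\M$. Because each nonzero bracket is a single basis vector $\partial_{a+f}$ rather than a combination, no cancellation can occur in the lower central series. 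I would then prove $\gamma_{k+1}(\u) = \mathrm{span}\{\partial_b : b \in \M^{\uparrow k}\}$ by induction on $k$: the base case $k = 0$ is $\gamma_1(\u) = \u$ with $\M^{\uparrow 0} = \M$; for the step, $\gamma_{k+1}(\u) = [\u, \gamma_k(\u)]$ is spanned by the brackets $[\partial_f, \partial_a]$ with $f \in \M$ and $a \in \M^{\uparrow(k-1)}$, and by the observation these are exactly the $\partial_b$ with $b = a + f$ obtained by appending one arrow $a \to b$ to a path of length $k-1$ ending at $a$, that is, the $\partial_b$ with $b \in \M^{\uparrow k}$. Translating back gives $\gamma_{k+1}(U) = U(\M^{\uparrow k})$.

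For the upper central series I would introduce the height $h(a)$, the length of the longest path in $\Gamma(\M)$ starting at $a$, so that $\M^{\downarrow k} = \{a \in \M : h(a) < k\}$, and set $V_k = \mathrm{span}\{\partial_a : a \in \M^{\downarrow k}\}$. Since every arrow $a \to a+f$ satisfies $h(a+f) \le h(a) - 1$, the observation yields $[\u, V_k] \subseteq V_{k-1}$; hence $(V_k)$ is a central series and $V_k \subseteq \mathfrak{z}_k(\u)$. The reverse inclusion I would prove by induction, assuming $\mathfrak{z}_{k-1}(\u) = V_{k-1}$. Suppose $x = \sum_a \alpha_a \partial_a \in \mathfrak{z}_k(\u)$ had a term with $h(a) \ge k$; choose such an $a_0$ of maximal height and an arrow $a_0 \to b_0 = a_0 + f_0$ with $h(b_0) = h(a_0) - 1 \ge k-1$. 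In $[x, \partial_{f_0}]$ the coefficient of $\partial_{b_0}$ comes only from the summand indexed by $a_0$, since $a$ is determined by $a = b_0 - f_0$; this coefficient is nonzero, so $[x, \partial_{f_0}] \notin V_{k-1} = \mathfrak{z}_{k-1}(\u)$, contradicting $x \in \mathfrak{z}_k(\u)$. Thus $\mathfrak{z}_k(\u) = V_k$ and $Z_k(U) = U(\M^{\downarrow k})$.

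The main obstacle is exactly this potential cancellation in the upper central series: a priori several pairs $(a, f)$ could contribute to the same $\partial_{a+f}$. It is resolved by the uniqueness of $a = b - f$ for a fixed head $b$ and increment $f$, which isolates a single nonzero coefficient. Finally, the range $k = \overline{0, l+1}$ is read off from the extreme cases: a path of length $l$ exists but none of length $l+1$, so $\M^{\uparrow l} \ne \emptyset$ while $\M^{\uparrow l+1} = \emptyset$, and dually $\M^{\downarrow l+1} = \M$ while $\M^{\downarrow l} \subsetneq \M$; both series therefore stabilize precisely at step $l+1$, and the nilpotency class equals $l+1$.
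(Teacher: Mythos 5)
Your proof is correct and follows essentially the same route as the paper: reduce to the Lie algebra $\u = \Lie(U)$, use the key fact (Remark~\ref{rm:GA}, via Lemmas~\ref{lm:RSu} and~\ref{lm:PRS}) that each bracket $[\partial_a,\partial_f]$ is either zero or a nonzero multiple of $\partial_{a+f}$ according to whether $\Gamma(\M)$ has the corresponding arrow, and induct on $k$, reading off the range $k=\overline{0,l+1}$ from the extreme cases exactly as the paper does. The only substantive difference is that you spell out the anti-cancellation argument for the inclusion $\mathfrak{z}_k(\u) \subseteq \langle \partial_a \mid a \in \M^{\downarrow k}\rangle$ (via heights and the uniqueness of $a = b_0 - f_0$ for fixed head and increment), a step the paper's proof of the upper central series asserts in a single sentence.
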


\begin{proof}
We proceed by induction on $k$. If $k = 0$, then $\M^{\uparrow k} = \M$ and $\M^{\downarrow k} = \emptyset$, so $U(\M^{\uparrow k})=U$ and $U(\M^{\downarrow k})=\{\id\}$ are the first subgroups of the corresponding central series. Assume that $U(\M^{\uparrow k})$ and $U(\M^{\downarrow k})$ are the $k^{\text{th}}$ subgroups of the lower and upper central series, respectively. We use the fact that the tangent algebra of the commutator of two closed connected subgroups is the commutator of their tangent algebras; see~\cite[Theorem~24.5.11]{TY}.  So the tangent algebra $\Lie([U, U(\M^{\uparrow k})])$ is spanned by all elements $[\partial_e, \partial_a]$, where $e \in \M$ and $a \in \M^{\uparrow k}$. By Remark~\ref{rm:GA}, they span the subspace $\langle \partial_b \mid b \in \M^{\uparrow k + 1} \rangle$, hence $[U, U(\M^{\uparrow k})] = U(\M^{\uparrow k + 1})$.

Similarly, consider $h \in U$ such that $ghg^{-1}h^{-1}\in U(\M^{\downarrow k})$ for all $g \in U$. Then $h = \exp(\partial)$ for some $\partial \in \Lie(U)$ and $[\partial, \partial_e] \in \Lie(U(\M^{\downarrow k}))$ for all $e \in \M$. So, $\partial$ is a linear combination of elements $\{\partial_u \mid u \in \M^{\downarrow k + 1}\}$.

Finally, $\M^{\uparrow l} \supsetneq \M^{\uparrow l + 1} = \emptyset$ and $\M^{\downarrow l}\subsetneq \M^{\downarrow l + 1} = \M$.
\end{proof}

\begin{Corollary}
The nilpotency class of $\Umax$ equals $l + 1$, where $l$ is the length of the longest path in $\Gamma(\R^+)$.
\end{Corollary}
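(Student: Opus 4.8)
The plan is to deduce this directly from Proposition~\ref{pr:CS}, applied to the special case $U = \Umax$, for which $\M = \R(\Umax) = \R^+$. First I would recall the standard convention: the nilpotency class of a nilpotent group $G$ is the smallest integer $c$ such that the $(c+1)$-th term of the lower central series is trivial, where the first term of the series is $G$ itself. Thus identifying the nilpotency class amounts to locating the first trivial term of the lower central series and counting correctly.

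Next I would invoke Proposition~\ref{pr:CS}, which identifies the lower central series of $\Umax$ with the chain
$$
U\big((\R^+)^{\uparrow 0}\big) \supseteq U\big((\R^+)^{\uparrow 1}\big) \supseteq \dots \supseteq U\big((\R^+)^{\uparrow l + 1}\big),
$$
where $l$ is the length of the longest path in $\Gamma(\R^+)$. Under the matching of indices, the $(k+1)$-th term of the lower central series is $U\big((\R^+)^{\uparrow k}\big)$, so that the first term (at $k = 0$) is all of $\Umax$. I would then pin down where this chain first becomes trivial using the last line of the proof of Proposition~\ref{pr:CS}: there $(\R^+)^{\uparrow l + 1} = \emptyset$ while $(\R^+)^{\uparrow l} \ne \emptyset$, the latter because $(\R^+)^{\uparrow l}$ contains the terminal vertex of a path of maximal length $l$. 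Consequently $U\big((\R^+)^{\uparrow l + 1}\big) = U(\emptyset) = \{\id\}$ is trivial, whereas $U\big((\R^+)^{\uparrow l}\big)$ is not. Hence the $(l+2)$-th term of the lower central series is the first trivial one, and by the definition recalled above the nilpotency class equals $l + 1$.

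There is no genuine obstacle here, since the statement is an immediate specialization of Proposition~\ref{pr:CS}. The only point demanding care is the off-by-one bookkeeping between the two indexing conventions, namely the enumeration of the lower central series starting from $k = 0$ in Proposition~\ref{pr:CS} versus the nilpotency class counted so that $\gamma_{c+1}$ is the first trivial term. Keeping straight that $\gamma_{k+1} = U\big((\R^+)^{\uparrow k}\big)$ yields $c = l + 1$, rather than $l$ or $l + 2$.
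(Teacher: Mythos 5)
Your proof is correct and follows exactly the route the paper intends: the corollary is an immediate specialization of Proposition~\ref{pr:CS} to $U = \Umax$, with the paper's own closing observation $(\R^+)^{\uparrow l} \supsetneq (\R^+)^{\uparrow l+1} = \emptyset$ supplying the non-triviality of the $(l+1)$-th term and triviality of the $(l+2)$-th. Your careful handling of the indexing between $U\big((\R^+)^{\uparrow k}\big)$ and $\gamma_{k+1}$ is precisely the only content needed, and it is handled correctly.
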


\begin{Remark}
The upper and lower central series of a regular unipotent group $U$ may not coincide. Indeed, let $\M_2=\{-q_2\}$ and $\M_1=\{-q_1,-q_1+q_2, -q_1+2q_2\}$. Then $\M^{\uparrow1}=\{-q_1,-q_1+q_2\}$, but $\M^{\downarrow2}=\{-q_1,-q_1+q_2, -q_2\}$.
\end{Remark}

\begin{Corollary}
The derived length of $\Umax$ equals the minimal number $k$ such that
$$
\underbrace{\Big( \cdots \big( (\R^+)^{\uparrow 1} \big)^{\uparrow 1} \cdots \Big)^{\uparrow 1}}_{k \text{ times}} = \emptyset
$$
\end{Corollary}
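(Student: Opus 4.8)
The plan is to pin down each term of the derived series of $\Umax$ as a regular unipotent subgroup whose root set is obtained by iterating the $\uparrow 1$ operation, and then to read off the derived length as the first index at which this root set becomes empty. Write $U^{(0)} = \Umax$ and $U^{(j+1)} = [U^{(j)}, U^{(j)}]$. The first structural point to establish is that every $U^{(j)}$ is again a \emph{regular} unipotent subgroup: the derived subgroup is characteristic in $U^{(j)}$, and since conjugation by any $t \in T$ is an automorphism of the $T$-normalized group $U^{(j)}$, the torus $T$ normalizes $U^{(j+1)}$ as well. Hence $U^{(j)} = U(\mathcal{N}_j)$ for the saturated set $\mathcal{N}_j = \R(U^{(j)})$, and the task reduces to describing the recursion $\mathcal{N}_j \mapsto \mathcal{N}_{j+1}$.

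The heart of the argument is the identity $[U(\mathcal{N}), U(\mathcal{N})] = U(\mathcal{N}^{\uparrow 1})$ for the root set $\mathcal{N} = \R(U)$ of any regular unipotent subgroup $U \subseteq \Umax$, with $\mathcal{N}^{\uparrow 1}$ computed in the graph $\Gamma(\mathcal{N})$. I would prove this exactly along the lines of Proposition~\ref{pr:CS}. By the theorem that the tangent algebra of a commutator of connected subgroups is the commutator of their tangent algebras (\cite[Theorem~24.5.11]{TY}), the algebra $\Lie([U(\mathcal{N}), U(\mathcal{N})])$ is spanned by the brackets $[\partial_a, \partial_e]$ with $a, e \in \mathcal{N}$. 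By Lemma~\ref{lm:RSu} such a bracket is either zero or equals $d\,\partial_{a+e}$ with $d \neq 0$, the latter occurring precisely when $a + e$ is a root; since $\mathcal{N}$ is saturated (Definition~\ref{def:Sat}, Lemma~\ref{propus}), this root then lies in $\mathcal{N}$, so by Remark~\ref{rm:GA} it is exactly the target of an arrow $a \to a + e$ in $\Gamma(\mathcal{N})$. Therefore the span is $\langle \partial_b \mid b \in \mathcal{N}^{\uparrow 1} \rangle$, which is the tangent algebra of $U(\mathcal{N}^{\uparrow 1})$, and the identity follows. Applying it inductively gives $\mathcal{N}_0 = \R^+$ and $\mathcal{N}_{j+1} = \mathcal{N}_j^{\uparrow 1}$, so that
$$
\mathcal{N}_k = \underbrace{\Big( \cdots \big( (\R^+)^{\uparrow 1} \big)^{\uparrow 1} \cdots \Big)^{\uparrow 1}}_{k \text{ times}}.
$$
The derived length is the least $k$ with $U^{(k)} = \{\id\}$, equivalently $\mathcal{N}_k = \emptyset$, which is precisely the claimed quantity.

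The point demanding the most care, and the step I expect to be the main subtlety, is that the inner $\uparrow 1$ in each iterate must be taken with respect to the graph $\Gamma(\mathcal{N}_j)$ on the \emph{current} root set, whose arrows use increments drawn from $\mathcal{N}_j$ itself, rather than with respect to the induced subgraph of $\Gamma(\R^+)$, which would allow the larger increment set $\R^+$. This matters because $[U(\mathcal{N}_j), U(\mathcal{N}_j)]$ involves only commutators of root subgroups already contained in $U^{(j)}$, so only increments from $\mathcal{N}_j$ are available; as $\mathcal{N}_j$ shrinks, arrows may disappear that would survive in the induced subgraph. The notation $(\cdots)^{\uparrow 1}$ in the statement is consistent with this reading, since $\uparrow 1$ is by definition relative to the graph on the set to which it is applied, and the acyclicity of each $\Gamma(\mathcal{N}_j)$ (established before Proposition~\ref{pr:CS}) guarantees that the iteration terminates. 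Once this is observed, together with the repeated use of saturation to keep $a + e$ inside the current set, the induction goes through verbatim.
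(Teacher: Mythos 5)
Your proof is correct and follows essentially the same route as the paper: the paper's own proof is a one-line appeal to the identity $\R([U,U]) = \R(U)^{\uparrow 1}$ for regular subgroups $U \subseteq \Umax$ (which is the $k=0$ step of Proposition~\ref{pr:CS}), iterated along the derived series exactly as you do. Your additional observations---that each derived subgroup is again regular, and that each $\uparrow 1$ must be taken in the graph of the \emph{current} root set rather than in the induced subgraph of $\Gamma(\R^+)$---are correct points the paper leaves implicit (the latter is confirmed by Example~\ref{Ex:NC}), not deviations from its argument.
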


\begin{proof}
The claim is a direct implication of the equality $\R([U, U]) = \R(U)^{\uparrow 1}$ for any regular subgroup $U \subseteq \Umax$.
\end{proof}

In particular, $\Umax$ is commutative if and only if the graph $\Gamma(\R^+)$ has no arrow, and $\Umax$ is metabelian if and only if the graph $\Gamma((\R^+)^{\uparrow 1})$ has no arrow.

\begin{Example} \label{Ex:NC} 
For the maximal unipotent subgroup $\Umax$ treated in Example~~\ref{ex:P123} the longest path has length $4$ (see Figure~1), so the nilpotency class is $5$. The derived series consists of $\Umax$, $U(\R')$ with $\R'$ in Figure~2, and $U(\{-q_1,-q_1+q_3\})$, so the derived length equals $3$.
\end{Example} 

\begin{figure}[ht] 
\begin{tikzpicture}[scale=2.5,shorten >=3pt]
  \tikzstyle{vertex}=[circle,fill=black,minimum size=5pt,inner sep=0pt]
  \tikzstyle{arrow}=[-latex, dashed, thick]
  \tikzstyle{arrow2}=[-latex, dotted, thick]

\clip (-4.3,-2.0) rectangle (1.9,2.05);
\begin{scope}[y=(120:1),rotate=120]
\foreach \x  [count=\j from 2] in {-8,...,8}
  \draw[help lines, dotted]
    (\x,-8) -- (\x,8)
    (-8,\x) -- (8,\x) 
    [rotate=120] (\x,-8) -- (\x,8) ;
\draw[->] (0,0) -- (-2,0) node[right] {$q_1$};
\draw[->] (0,0) -- (0,-2.3) node[right] {$q_2$};
\draw[->] (0,0) -- (4,4) node[left] {$q_3$};
\coordinate (d-00)   at (1,0);
\coordinate (d-10)   at (1,-1);
\coordinate (d-11)   at (2,0);
\coordinate (d-01)   at (2,1);
\coordinate (d-02)   at (3,2);
\coordinate (d-03)   at (4,3);
\coordinate (d0-0)   at (0,1);
\coordinate (d0-1)   at (1,2);
\coordinate (d0-2)   at (2,3);
\coordinate (d00-)   at (-1,-1);
\foreach \x in {d-00, d-10, d-11, d-01, d-02, d-03, d0-0, d0-1, d0-2, d00-}
  \node[vertex] at (\x) {};

\draw[arrow] (d0-1) -- (d0-0);
\draw[arrow] (d0-2) -- (d0-1);
\draw[arrow] (d-03) -- (d-02);
\draw[arrow] (d-02) -- (d-01);
\draw[arrow] (d-01) -- (d-00);
\draw[arrow] (d-11) -- (d-10);
\draw[arrow] (d-11) -- (d-03);
\draw[arrow] (d-11) -- (d-02);
\draw[arrow] (d-11) -- (d-01);
\draw[arrow] (d-10) -- (d-02);
\draw[arrow] (d-10) -- (d-01);
\draw[arrow] (d-10) -- (d-00);

\draw[arrow2] (d00-) -- (d0-0);
\draw[arrow2] (d00-) -- (d0-1);
\draw[arrow2] (d00-) -- (d-02);
\draw[arrow2] (d00-) -- (d-01);
\draw[arrow2] (d00-) -- (d-00);
\draw[arrow2] (d00-) -- (d-10);
\draw[arrow2] (d0-2) -- (d-03);
\draw[arrow2] (d0-1) -- (d-02);
\draw[arrow2] (d0-0) -- (d-01);
\draw[arrow2] (d0-2) -- (d-02);
\draw[arrow2] (d0-1) -- (d-01);
\draw[arrow2] (d0-0) -- (d-00);

\node at (-2,-1.5) {$\R_3$};
\node at (3.5,1.5) {$\R_1$};
\node at (.5,2.5) {$\R_2$};

\node[above right] at (d00-) {$-q_3$};
\node[below] at (d0-0) {$-q_2$};
\node[below left,align=right] at (d-00) {$-q_1$};
\node[below,align=left] at (d0-1) {$-q_2+q_3$};
\node[below,align=left] at (d0-2) {$-q_2+2q_3$};
\node[below left,align=right] at (d-01) {\contour{white}{$-q_1+q_3$}};
\node[below left,align=right] at (d-02) {$-q_1+2q_3$};
\node[below left,align=right] at (d-03) {$-q_1+3q_3$};
\node[above,align=right] at (d-10) {$-q_1+q_2$};
\node[above,align=right] at (d-11) {$-q_1+q_2+q_3$};
\end{scope}
\end{tikzpicture}

\caption{The graph $\Gamma(\R)$ from Example~\ref{ex:P123}. Here each third of the plane represents the elements of $\R_i$, $i = 1, 2, 3$ in suitable coordinates. The inner arrows are depicted dashed and outer ones dotted.}
\end{figure}

\begin{figure}[ht] 
\begin{tikzpicture}[scale=2.5,shorten >=3pt]
  \tikzstyle{vertex}=[circle,fill=black,minimum size=5pt,inner sep=0pt]
  \tikzstyle{arrow}=[-latex, dashed, thick]
  \tikzstyle{arrow2}=[-latex, dotted, thick]

\clip (-4.3,-2.0) rectangle (1.9,2.05);
\begin{scope}[y=(120:1),rotate=120]
\foreach \x  [count=\j from 2] in {-8,...,8}
  \draw[help lines, dotted]
    (\x,-8) -- (\x,8)
    (-8,\x) -- (8,\x) 
    [rotate=120] (\x,-8) -- (\x,8) ;
\draw[->] (0,0) -- (-2,0) node[right] {$q_1$};
\draw[->] (0,0) -- (0,-2.3) node[right] {$q_2$};
\draw[->] (0,0) -- (4,4) node[left] {$q_3$};
\coordinate (d-00)   at (1,0);
\coordinate (d-10)   at (1,-1);
\coordinate (d-01)   at (2,1);
\coordinate (d-02)   at (3,2);
\coordinate (d-03)   at (4,3);
\coordinate (d0-0)   at (0,1);
\coordinate (d0-1)   at (1,2);
\foreach \x in {d-00, d-10, d-01, d-02, d-03, d0-0, d0-1}
  \node[vertex] at (\x) {};

\draw[arrow] (d-10) -- (d-01);
\draw[arrow] (d-10) -- (d-00);

\draw[arrow2] (d0-1) -- (d-01);
\draw[arrow2] (d0-0) -- (d-00);

\node at (-2,-1.5) {$\R_3$};
\node at (3.5,1.5) {$\R_1$};
\node at (.5,2.5) {$\R_2$};

\node[below] at (d0-0) {$-q_2$};
\node[below left,align=right] at (d-00) {$-q_1$};
\node[below,align=left] at (d0-1) {$-q_2+q_3$};
\node[below,align=right] at (d-01) {\contour{white}{$-q_1+q_3$}};
\node[below,align=right] at (d-02) {$-q_1+2q_3$};
\node[below,align=right] at (d-03) {$-q_1+3q_3$};
\node[above,align=right] at (d-10) {$-q_1+q_2$};
\end{scope}
\end{tikzpicture}

\caption{The graph $\Gamma(\R')$ with $\R'=\R([\Umax,\Umax])$ in Example~\ref{ex:P123}.}
\end{figure}

We end this section with the observation that the lower central series is defined by a subgraph of $\Gamma(\M)$, which has simpler structure.

\begin{definition}
We call an arrow in $\Gamma(\M)$ from $a$ to $b$ \emph{inner}, if $a, b \in \M_i$ for some $i$, and \emph{outer} otherwise.
\end{definition}

\begin{lemma} \label{lm:in-out}
Assume that there is an inner arrow $a \to b$ and an outer one $b \to c$ in $\Gamma(\M)$. Then there exist an outer arrow $a\to b'$ and an inner one $b'\to c$ for some vertex $b' \in \Gamma(\M)$.
\end{lemma}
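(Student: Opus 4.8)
The plan is to track, for every arrow of $\Gamma(\M)$, the components of its endpoints, and to reduce the statement to a single saturation argument. For a root $x\in\M$ write $i_x$ for the index with $x\in\M_{i_x}$. Since an arrow $a\to b$ forces its generator $e=b-a$ to be uniquely determined and to lie in $\M$, I would first record a clean criterion distinguishing inner from outer arrows. Writing each positive root in the normal form $-q_i+\sum_{j>i}\pairing{\cdot}{p_j}q_j$ of Lemma~\ref{lm:PRS}, I compare the $q$-expansions of $a$ and $e=b-a$: if $i_a<i_e$, the coefficient of $q_{i_a}$ in $b=a+e$ is still $-1$ and the minimal index is unchanged, so $b\in\M_{i_a}$ and the arrow is inner; if $i_a>i_e$, the minimal index drops to $i_e$, so $b\in\M_{i_e}$ and the arrow is outer; and $i_a=i_e$ is impossible, as it would put a $-2$ in the $q_{i_a}$-slot. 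Hence an arrow is inner exactly when $i_a<i_e$, and outer exactly when $i_a>i_e$ (and then $i_b=i_e$).

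Applying this to the hypotheses, set $e_1=b-a$ and $e_2=c-b$. The inner arrow $a\to b$ gives $i_a=i_b=:i$ with $e_1\in\M_{j_1}$, $j_1>i$, while the outer arrow $b\to c$ gives $e_2\in\M_{j_2}$ with $j_2<i$ and $c\in\M_{j_2}$. The natural candidate is to swap the order of the two steps, i.e.\ to take $b'=a+e_2$. If $b'$ turns out to be a vertex of $\Gamma(\M)$, then $a\to b'$ uses the generator $e_2$ with $i_{e_2}=j_2<i=i_a$, hence is outer, while $b'\to c$ uses the generator $e_1$ (since $b'+e_1=a+e_2+e_1=c$) with $i_{b'}=j_2<j_1=i_{e_1}$, hence is inner; moreover $i_c=j_2=i_{b'}$ confirms that $b'$ and $c$ share a component. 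So everything hinges on showing $b'=a+e_2\in\M$.

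The main obstacle is precisely this membership, and I expect it to split into two points. First, $a+e_2$ must be a root: applying Lemma~\ref{lm:RSu} to the pair $(e_2,a)$ (legitimate since $\pairing{a}{p_{j_2}}=0$, because $j_2<i$) reduces this to the positivity $\pairing{e_2}{p_i}>0$, and this is forced by the fact that $c=b+e_2$ is already a root, since $\pairing{c}{p_i}=\pairing{b}{p_i}+\pairing{e_2}{p_i}=-1+\pairing{e_2}{p_i}\ge0$. Second, I would invoke saturation: by Lemma~\ref{propus} the set $\M_{j_2}$ is saturated with respect to $\bigsqcup_{l>j_2}\M_l$, and here $e_2\in\M_{j_2}$ while $a\in\M_i\subseteq\bigsqcup_{l>j_2}\M_l$, so by Definition~\ref{def:Sat} the sum $e_2+a$ is either not a root or lies in $\M_{j_2}$. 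Having just shown it is a root, I conclude $a+e_2\in\M_{j_2}\subseteq\M$, which completes the argument. The key insight is that the saturation property applies in exactly the direction needed, so once the normal-form bookkeeping and the single positivity estimate are in place, no further work is required.
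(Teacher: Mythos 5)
Your proof is correct and takes essentially the same route as the paper's: both set $b' = a + (c - b)$ and check that $b'$ is again a root in the component of $c$, the paper via the ray-matrix criterion of Proposition~\ref{pr:RMR}, you via Lemma~\ref{lm:RSu} together with the same positivity observation that $c$ being a root forces $\pairing{c-b}{p_i} > 0$ for the common component $i$ of $a$ and $b$. If anything, your write-up is slightly more complete on one point: the paper's proof shows only that $b'$ is a root and passes tacitly from $\R_k$ to $\M_k$, whereas you justify the membership $b' \in \M$ explicitly via the saturation of $\M = \R(U)$ (Lemma~\ref{propus} and Definition~\ref{def:Sat}).
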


\begin{proof}
Let $a, b \in \M_j$, $c \in \M_k$, and the root $d = b - a$ belong to $\M_i$. Then $i > j > k$ and the root $e = c - b$ belongs to $\M_k$.

Let us take $b' = a + e$ and prove that $b' \in \M_k$. The root $d$ has $-1$ at $i^{\text{th}}$ coordinate and $0$'s at $j^{\text{th}}$ and $k^{\text{th}}$ coordinates. The roots $a, b$ have $-1$ at $j^{\text{th}}$ coordinate and $0$ at $k^{\text{th}}$ coordinate. The roots $c, e$ have $-1$ at $k^{\text{th}}$ coordinate and $e$ has positive $j^{\text{th}}$ coordinate. These observations imply that $b'$ has $-1$ at $k^{\text{th}}$ coordinate and non-negative elsewhere.

So, by Proposition~\ref{pr:RMR}, since $-A b' = (-A a) + (-A e)$ has non-negative coordinates, $b'$ is indeed a root in $\M_k$. The claim follows.
\end{proof}

\begin{Proposition} \label{PrPr}
For any path ending in a vertex $v \in \Gamma(\M)$ there exists a path of the same length, which ends in $v$ and consists of inner arrows.
\end{Proposition}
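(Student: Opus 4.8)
The plan is to prove the statement by induction on the number of outer arrows occurring in a given path ending at $v$, using Lemma~\ref{lm:in-out} as a ``bubbling'' device together with one extra observation that trades a leading outer arrow for an inner one. The base case is a path with no outer arrows, where there is nothing to do. For the inductive step I would show that any path ending at $v$ and containing at least one outer arrow can be replaced by a path of the same length, still ending at $v$, but with strictly fewer outer arrows; iterating then produces an all-inner path of the prescribed length.

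The key observation I would isolate first is the following. If $u\to w$ is an \emph{outer} arrow, with $u\in\M_a$, $w\in\M_b$ (so $a>b$ by Remark~\ref{rm:GA}) and edge $g=w-u\in\M$, then $g$ lies in $\M_b$, the \emph{same} component as $w$. This is exactly the coordinate computation already used in the proof of Lemma~\ref{lm:in-out}: by Lemma~\ref{lm:PRS} the vector $u$ has zero $b$-th coordinate, so $\pairing{g}{p_b}=\pairing{w}{p_b}=-1$, whence $g\in\R_b\cap\M=\M_b$. Consequently $w=g+u$ exhibits an arrow $g\to w$ whose edge is $u\in\M$ and whose endpoints $g,w$ both lie in $\M_b$; that is, $g\to w$ is an \emph{inner} arrow into $w$. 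In short, every incoming outer arrow $u\xrightarrow{g}w$ is shadowed by an incoming inner arrow $g\xrightarrow{u}w$.

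With this in hand the inductive step runs as follows. Suppose the path contains an outer arrow, and let it first occur at position $t$, so the arrows in positions $1,\dots,t-1$ are inner. Applying Lemma~\ref{lm:in-out} to the consecutive pair (inner, outer) at positions $t-1,t$ replaces it by (outer, inner), moving the outer arrow one step to the left; repeating, I bring an outer arrow to the very first position while keeping the total number of outer arrows unchanged. Now the path starts with an outer arrow $v_0\xrightarrow{g}v_1$, and I replace this first arrow by the inner arrow $g\xrightarrow{u}v_1$ supplied by the observation above, i.e. I change the starting vertex from $v_0$ to $g$ and leave the rest of the path $v_1\to\dots\to v$ untouched. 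The length is preserved, the endpoint $v$ is preserved, and the number of outer arrows has dropped by one.

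The step I expect to require the most care is ensuring the replacement is legitimate and that the induction is genuinely driven downward. The replacement changes the starting vertex, so it can only be carried out at the first position, since otherwise the head of the preceding arrow would be disturbed; this is precisely why the bubbling via Lemma~\ref{lm:in-out} is needed to bring an outer arrow to the front beforehand. One must also check that a single application of Lemma~\ref{lm:in-out} leaves the total count of outer arrows unchanged (it merely interchanges one inner and one outer arrow, affecting only those two arrows and the single vertex between them), so that it is the replacement step, and not the bubbling, that forces the count strictly down. Termination is then clear, as the non-negative number of outer arrows decreases at each stage; once it reaches zero, the resulting path of the same length ending in $v$ consists entirely of inner arrows, as required.
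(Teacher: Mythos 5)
Your proof is correct and takes essentially the same route as the paper: bubble an outer arrow to the front of the path via Lemma~\ref{lm:in-out}, then trade the leading outer arrow $u \xrightarrow{\,g\,} w$ for the inner arrow $g \xrightarrow{\,u\,} w$ (changing only the starting vertex), which strictly decreases the number of outer arrows. Your coordinate check that the edge $g$ lies in the same component $\M_b$ as the target $w$ is exactly the justification the paper leaves implicit when it asserts that $a_1' \to a_2'$ is inner.
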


\begin{proof}
Consider a path $a_1 \to \cdots \to a_k \to v$ of length $k$, which contains an outer arrow. By Lemma~\ref{lm:in-out}, we can swap inner and outer arrows until the first arrow $a_1 \to a_2^\prime$ is outer.

Let $a_2^\prime = a_1 + a_1^\prime$. Then the arrow $a_1^\prime \to a_2^\prime$ is inner since $a_1^\prime$ and $a_2^\prime$ lie in the same $\R_i$. We obtain a path from $a_1^\prime$ to $v$ of length $k$ with a smaller number of outer arrows. Proceeding in this manner, we construct a desired path.
\end{proof}

\begin{Corollary}
The subgroups $U(\M^{\uparrow i})$ do not change if we substitute the initial graph $\Gamma(\M)$ by the subgraph of inner arrows.
\end{Corollary}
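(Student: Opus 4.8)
The plan is to deduce the Corollary directly from Proposition~\ref{PrPr}, since that proposition already contains all the substance of the argument. First I would fix notation: write $\Gamma_{\mathrm{in}}(\M)$ for the subgraph of $\Gamma(\M)$ obtained by keeping only the inner arrows, and let $\M^{\uparrow k}_{\mathrm{in}}$ denote the set of roots $b \in \M$ admitting a path of length $k$ ending in $b$ inside $\Gamma_{\mathrm{in}}(\M)$. Since each $U(\M^{\uparrow k})$ depends only on the underlying set of roots $\M^{\uparrow k}$, it suffices to prove the equality of sets $\M^{\uparrow k}_{\mathrm{in}} = \M^{\uparrow k}$ for every $k$.

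The inclusion $\M^{\uparrow k}_{\mathrm{in}} \subseteq \M^{\uparrow k}$ is immediate, because every path in $\Gamma_{\mathrm{in}}(\M)$ is in particular a path in $\Gamma(\M)$ of the same length. For the reverse inclusion I would take $b \in \M^{\uparrow k}$, so that there is a path of length $k$ in $\Gamma(\M)$ ending in $b$. Applying Proposition~\ref{PrPr} yields a path of the \emph{same} length $k$ ending in $b$ that consists of inner arrows only; this path lies in $\Gamma_{\mathrm{in}}(\M)$, whence $b \in \M^{\uparrow k}_{\mathrm{in}}$. This gives $\M^{\uparrow k} \subseteq \M^{\uparrow k}_{\mathrm{in}}$, and the two inclusions together yield $\M^{\uparrow k}_{\mathrm{in}} = \M^{\uparrow k}$ for all $k$.

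With the set-level equality in hand, the subgroups $U(\M^{\uparrow k})$ are literally unchanged upon passing to the subgraph of inner arrows, which is the assertion. I do not expect any genuine obstacle here; the only point requiring a moment of care is that the definition of $\M^{\uparrow k}$ refers to paths of a fixed length $k$, not of length at most $k$, so it is essential that Proposition~\ref{PrPr} preserves the length of the path exactly rather than merely producing some inner path to $b$. That exactness is precisely what keeps us within the same step of the filtration, and it is built into the statement of Proposition~\ref{PrPr}.
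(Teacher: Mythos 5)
Your proof is correct and matches the paper's intended argument: the Corollary is stated without proof precisely because it is the immediate consequence of Proposition~\ref{PrPr} that you spell out, namely that paths of inner arrows are in particular paths in $\Gamma(\M)$, while Proposition~\ref{PrPr} supplies the converse with the path length preserved exactly. Your remark that the exact preservation of length (rather than merely producing some inner path) is the essential point is precisely the right observation.
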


\begin{Remark}
Proposition~\ref{PrPr} shows that there is a (commutative) subgroup $U(\M_i)$ in $U(\M)$ such that the intersections of the subgroups in the lower central series with $U(\M_i)$ are pairwise distinct. In particular, the nilpotency class of $U(\M)$ does not exceed $\max_i|\M_i|$.     
\end{Remark} 


\section{Toric equivariant completions of unipotent groups}
\label{sec6}

In this section we consider equivariant completions of unipotent groups by toric varieties. The following proposition shows that a complete toric variety $X$ can not be realized as an equivariant completion of a non-commutative regular unipotent subgroup.

\begin{Proposition}
Consider a complete toric variety $X$ with an acting torus $T$. Let a regular unipotent subgroup $U$ of $\Aut(X)$ act on $X$ with an open orbit $O$ so that the action $U \curvearrowright O$ has trivial stabilizers. Then the group $U$ is commutative.
\end{Proposition}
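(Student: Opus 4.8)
The plan is to show that the hypotheses force $U$ to coincide with a principal unipotent subgroup, which is commutative by definition. First I would extract the dimension of $U$ from the stabilizer condition: since $U \curvearrowright O$ has trivial stabilizers, the orbit map $U \to O$, $u \mapsto u \cdot x_0$ is injective for any $x_0 \in O$, so $\dim U = \dim O = \dim X = n$. Because $U$ is regular, it is normalized by $T$ and hence contained in a maximal regular unipotent subgroup of $\Aut(X)$, which then also acts with an open orbit (containing that of $U$); by definition $X$ is therefore radiant and $\Sigma$ is bilateral. After a suitable choice of $v$ and a numbering of $p_1, \dots, p_n$ I may take this maximal subgroup to be $\Umax = U(\R^+_v)$ and assume that the conditions~\eqref{eq:Cs-condition} and~\eqref{eq:v-qi} are in force, so that all the machinery of Sections~\ref{sec2}--\ref{sec4} applies to $U \subseteq \Umax$.

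Next I would invoke Theorem~\ref{Th:PU}. Since $U \subseteq \Umax$ acts on $X$ with an open orbit, the theorem yields $-q_1, \dots, -q_n \in \M$, where $\M = \R(U)$; equivalently, $U$ contains the principal unipotent subgroup $P = U(\{-q_1, \dots, -q_n\})$. On the other hand, the multiplication isomorphism $U_{e_1} \times \dots \times U_{e_k} \to U$ recorded at the end of the subsection on regular subgroups gives $\dim U = |\M|$. Combining $|\M| = \dim U = n$ with the inclusion of the $n$-element set $\{-q_1, \dots, -q_n\} \subseteq \M$ forces $\M = \{-q_1, \dots, -q_n\}$, and hence $U = P$.

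It then remains to observe that $P$ is commutative, which is immediate either from the definition of a principal unipotent subgroup or directly from Lemma~\ref{lm:RSu}: for $i \ne j$ we have $\pairing{-q_j}{p_i} = 0$ and $d = \pairing{-q_i}{p_j} = 0$, so case~(2) gives $[\partial_{-q_i}, \partial_{-q_j}] = 0$. Thus $\Lie(U)$ is abelian and $U$ is commutative.

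I expect the only genuinely delicate point to be the reduction in the first paragraph, namely arranging $U \subseteq \Umax$ compatibly with~\eqref{eq:Cs-condition} and~\eqref{eq:v-qi}. The substance here is that a unipotent $U$ cannot contain both a root subgroup $U_e$ and its opposite $U_{-e}$ for a semisimple root $e$, so the semisimple roots of $U$ form a one-sided set that can be made positive by choosing $v$; reordering within each $\asymp$-class then makes this choice compatible with the standard numbering. Once this reduction is secured, the dimension count closes the argument with no further computation.
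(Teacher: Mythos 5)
Your proof is correct, but it closes the argument by a different mechanism than the paper's, and the comparison is instructive. Both arguments share the reduction to $U \subseteq \Umax$ and the appeal to Theorem~\ref{Th:PU}, which places the principal unipotent subgroup $U(\{-q_1,\dots,-q_n\})$ inside $U$. The paper then proceeds by contradiction: if $U$ were non-commutative, this inclusion would be proper, Lemma~\ref{lm:er} would produce an elementary root $-q_i+q_j\in\R(U)$, and since $U_{-q_i}$ and $U_{-q_i+q_j}$ move only the Cox coordinate $x_i$, their generic orbits on $X$ coincide, so points of $O$ have positive-dimensional stabilizers in $U$ --- contradicting the hypothesis. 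You instead spend the trivial-stabilizer hypothesis at the very start to get $\dim U=\dim O=n$, and combine this with $\dim U=|\R(U)|$ (from the multiplication isomorphism $U_{e_1}\times\dots\times U_{e_k}\to U$ recorded in Section~\ref{sec1}) to force the inclusion $\{-q_1,\dots,-q_n\}\subseteq\R(U)$ to be an equality. This counting argument avoids Lemma~\ref{lm:er} and the orbit-coincidence observation entirely, and it yields a slightly stronger conclusion: $U$ \emph{is} a principal unipotent subgroup, not merely a commutative one. What the paper's route buys in exchange is an explicit witness of how freeness fails (two root subgroups with identical generic orbits), which makes the contrast with the non-regular example on $\PP^n$ that follows the proposition transparent.

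There is one imprecision to repair, in the step you yourself flag as delicate. Making the semisimple roots of $\R(U)$ positive by a choice of $v$ and a reordering within $\asymp$-classes is not always possible: if $\R(U)$ contains a detached root $q_i$ --- as happens already for $U=U_{q_1}$ acting on $\PP^1$ --- then no $v$ subject to~\eqref{eq:v-qi} works, because~\eqref{eq:v-qi} forces $\pairing{q_i}{v}<0$ for every $i$, and permuting the indices $1,\dots,n$ does not change this. The reduction should instead be done by conjugation: $TU$ is connected and solvable, hence contained in a Borel subgroup $B\supseteq T$ of $\Aut^0(X)$, so $U$ lies in the maximal regular unipotent subgroup $B_u$; any two Borel subgroups containing $T$ are conjugate by an element of the normalizer of $T$, and such an element induces a lattice automorphism preserving $\Sigma$, so conjugating preserves regularity, the open-orbit and trivial-stabilizer hypotheses, and (non)commutativity, while moving $U$ into the standard $\Umax$. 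Equivalently, one must be allowed to re-choose which $n$ rays of $\Sigma$ serve as the basis of the bilateral numbering, not merely their order. Since the paper performs the same reduction without comment (``we may assume that $U$ is contained in $\Umax$''), this is a matter of fixing a justification rather than a flaw in the substance of your argument.
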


\begin{proof}
We may assume that $U$ is contained in $\Umax$. If $U$ is not commutative, then by Theorem~\ref{Th:PU} $U$ contains a principal unipotent subgroup as a proper subgroup. By Lemma~\ref{lm:er} there is an elementary root $-q_i + q_j$ in $\R(U)$. Then the one-parameter subgroups $U_{-q_i}$ and $U_{-q_i + q_j}$ in $U$ acting on the spectrum of the Cox ring $R(X)$ change only the coordinate~$x_i$. This shows that generic orbits of $U_{-q_i}$ and $U_{-q_i + q_j}$ on $X$ coincide, and the action $U \curvearrowright O$ has non-trivial stabilizers, a contradiction. 
\end{proof}

The next example demonstrates that the regularity condition is crucial.

\begin{Example}
For $n \ge 3$ we consider the subgroup
$$
U=\left\{\left.
\left( \begin{array}{cccccc}
1 & 0 & 0 & 0 &\cdots  & 0\\
a_1& 1 & 0 & 0 &\cdots  & 0 \\
a_2& a_n& 1 & 0 &\cdots &0 \\
a_3& 0 &  0   & 1&\cdots  &0 \\
\vdots& &      &  & \ddots \\
a_n& 0 &     0 & 0 && 1 \\
\end{array} \right)
\right| a_1,\ldots,a_n\in\KK\right\}\subset\GL(n + 1, \KK).
$$
with the action on $\PP^n$ given in homogeneous coordinates by
\[
(a_1,\ldots,a_n)\cdot [z_0:\ldots:z_n]=[z_0:z_1+a_1z_0:z_2+a_2z_0+a_nz_1:z_3+a_3z_0:\ldots:z_n+a_nz_0].
\]
This subgroup is neither commutative nor regular. It acts on $\PP^n$ with an open orbit and generic stabilizers are trivial.  
\end{Example}

The theory of equivariant completions of commutative unipotent groups is nothing but the theory of additive actions. In the toric case this theory is rather well-developed, see~\cite{AR, AZ, D1, D2, HT}. It will be interesting to study (toric) equivariant completions of non-commutative unipotent groups. Observations given above may serve as first steps towards this goal.

\smallskip

It is natural here to draw an analogy with the case of flag varieties. Take a simple linear algebraic group $G$ and a parabolic subgroup $P$ in $G$. The homogeneous space $G / P$ is a projective variety called a flag variety of the group $G$. This variety contains an open Bruhat cell $O$, which is an orbit of the unipotent radical $U$ of the opposite parabolic subgroup $P^-$, and the action of $U$ on $O$ has trivial stabilizers. So $G / P$ is an equivariant completion of~$U$. Moreover, $U$ may be regarded as a regular unipotent subgroup of $\Aut(G / P)$ since $U$ is normalized by a maximal torus in $G$.

At the same time, if $G$ is the connected component of the group $\Aut(G / P)$ (this is almost always the case), then $G / P$ admits an additive action if and only if the group $U$ is commutative, and the list of such cases is very short (see~\cite{Ar}). In particular, it may happen only if the parabolic subgroup $P$ is maximal.

It is proved in~\cite[Theorem~1.1]{Ch} that $G / P$ can be realized as an equivariant completion of the unipotent radical $U$ in a unique way up to isomorphism, provided $G / P$ is not isomorphic to a projective space.


\section{The case of toric surfaces}
\label{sec7} 

In this section we illustrate the results obtained above in the case of surfaces. We keep the notation of the previous sections. Let $X$ be a radiant toric surface; here $n = 2$ and so $C=\{1, 2\}$.

If 1 and 2 are $\succeq$-incomparable, then $\R$ consists of two basic roots and $\Umax = \GG_a^2$ is the only unipotent group that acts faithfully with an open orbit on $X$. Equivalently, $X$ is a radiant toric variety of type~I.

Assume now that $1 \succeq 2$. Then
$$
\R_2^+ = \{ -q_2 \},\ \R_1^+ = \{ -q_1, \ -q_1 + q_2, \ \dots, \ -q_1 + d q_2 \}
$$
for some $d \geq 1$. By definition of a Demazure root, the number $d$ is determined by the following conditions on elements of the ray matrix $A$:
$$
a_{k1} \ge d a_{k2} \quad \text{for} \ \text{all} \ 3 \le k \le m \quad \text{and} \quad a_{k1} < (d + 1) a_{k2} \quad \text{for} \ \text{some} \ 3 \le k \le m.
$$
In this case the group $\Umax$ is metabelian of nilpotency class $d + 1$ and
$$
\Umax \cong \GG_a \ltimes \GG_a^{d + 1}.
$$

Applying Algorithm~\ref{alg} to this setting, we see that there are $d+1$ non-isomorphic regular unipotent subgroups that act faithfully with an open orbit on $X$:
$$
U_{-q_2} \ltimes \big( U_{-q_1} \times U_{-q_1 + q_2} \times \cdots \times U_{-q_1 + l q_2} \big) \cong \GG_a \ltimes \GG_a^{l + 1}, \quad 0 \le l \le d.
$$
By Proposition~\ref{pr:Nor}.(2) the action on the normalized subgroup is defined by
$$
u_{-q_2} (\alpha') u_{-q_1 + k q_2} (\alpha) u_{-q_2} (-\alpha') =
$$
$$
= \prod_{i=0}^k u_{-q_1 + (k-i) q_2} \left( \binom{k}{i} \alpha (\alpha')^i \right) = \prod_{j=0}^k u_{-q_1 + j q_2} \left( \binom{k}{j} \alpha (\alpha')^{k-j} \right).
$$

\smallskip

Now let us take a closer look at smooth toric surfaces. Such a surface is given by primitive vectors $p_0, p_1, \ldots, p_m, p_{m+1}$ with $p_0 = p_m$ and $p_1 = p_{m+1}$ such that any pair $p_s, p_{s + 1}$ forms a basis of the lattice $N = \ZZ^2$. In this case $p_{s - 1} + p_{s + 1} = c_s p_s$ for some integer numbers~$c_s$, ${1\le s\le m}$; see~\cite[Section~1.7]{Oda} or~\cite[Section~2.5]{F}. Clearly, the sequence $(c_1, \ldots, c_m)$ determines a smooth complete toric surface uniquely.

\begin{Proposition} \label{propseq}
Let $X$ be a smooth complete toric surface given by a sequence $(c_1, \ldots, c_m)$. Then $X$ is radiant if and only if the sequence $(c_1, \ldots, c_m, c_1)$ contains at least two standing nearby non-positive numbers.
\end{Proposition}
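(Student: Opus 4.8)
The plan is to invoke Theorem~A, which lets me replace ``radiant'' by ``bilateral'' and turn the statement into one about the cyclically ordered primitive generators $p_1,\dots,p_m$. We may assume the generators are listed counterclockwise, so $\det(p_s,p_{s+1})=1$ for every $s$. Since $n=2$, a bilateral structure is a choice of two rays $p_a,p_b$ forming a basis of $N$ such that every other generator lies in the negative orthant $\langle -p_a,-p_b\rangle$. The computational core is a local dictionary between $c_s$ and the position of three consecutive rays: normalizing coordinates by the lattice automorphism sending $p_s\mapsto(1,0)$ and $p_{s+1}\mapsto(0,1)$, the defining relations give $p_{s-1}=c_sp_s-p_{s+1}=(c_s,-1)$ and $p_{s+2}=c_{s+1}p_{s+1}-p_s=(-1,c_{s+1})$. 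Hence $p_{s-1}$ lies in the closed third quadrant $\langle -p_s,-p_{s+1}\rangle$ if and only if $c_s\le 0$, and $p_{s+2}$ lies there if and only if $c_{s+1}\le 0$. I would record this observation first, since both implications rest on it.

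For ``radiant $\Rightarrow$ two adjacent non-positive entries'', assume the fan is bilateral with basis rays $p_a,p_b$. I would first show $p_a,p_b$ are cyclically adjacent: every other generator lies in $\langle -p_a,-p_b\rangle$, so no ray is interior to the strongly convex cone $\langle p_a,p_b\rangle$ (two basis vectors span an angle strictly below $180^\circ$), and completeness then forces $p_a,p_b$ to be neighbours with $\langle p_a,p_b\rangle\in\Sigma$. Relabel the pair as $p_s,p_{s+1}$ and normalize as above. The flanking generators $p_{s-1},p_{s+2}$ are among the remaining rays, hence lie in $\langle -p_s,-p_{s+1}\rangle$; by the dictionary this gives $c_s\le 0$ and $c_{s+1}\le 0$, two non-positive entries standing side by side in $(c_1,\dots,c_m,c_1)$.

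For the converse, assume $c_s\le 0$ and $c_{s+1}\le 0$ (the appended $c_1$ being what encodes the wrap-around pair $(c_m,c_1)$). I would take $p_s,p_{s+1}$ as basis and, after the same normalization, use the dictionary to place $p_{s-1}$ and $p_{s+2}$ in the closed third quadrant, i.e. at angles in $[180^\circ,270^\circ]$. It remains to show every other generator lands there too. For this I would use that, read counterclockwise as $p_s,p_{s+1},p_{s+2},\dots,p_{s-1}$, the ray angles strictly increase from $0^\circ$ to $360^\circ$ --- exactly the statement that the smooth cones $\langle p_t,p_{t+1}\rangle$ tile the plane. Since the first ray after $p_{s+1}$, namely $p_{s+2}$, already has angle $\ge 180^\circ$ and the last ray $p_{s-1}$ has angle $\le 270^\circ$, monotonicity sandwiches every generator $p_{s+2},\dots,p_{s-1}$ into $[180^\circ,270^\circ]=\langle -p_s,-p_{s+1}\rangle$. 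This exhibits a bilateral structure, so $X$ is radiant by Theorem~A.

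The main obstacle is this last sandwiching step: the hypotheses $c_s\le 0$ and $c_{s+1}\le 0$ are purely local, bounding only the two rays flanking the chosen basis, whereas the bilateral condition is global, requiring all $m-2$ other generators to sit in a single quadrant. The bridge is the monotonicity of the angular order of the rays of a complete toric surface, which promotes the two one-sided local bounds into a two-sided global bound. I would therefore state this monotonicity carefully (via the tiling by the cones $\langle p_t,p_{t+1}\rangle$); the remainder is routine coordinate bookkeeping on top of the dictionary.
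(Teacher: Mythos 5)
Your proof is correct and follows essentially the same route as the paper's: both reduce ``radiant'' to the bilateral condition, compute $p_{s-1}=c_sp_s-p_{s+1}=(c_s,-1)$ and $p_{s+2}=c_{s+1}p_{s+1}-p_s=(-1,c_{s+1})$ in the basis $p_s,p_{s+1}$, and use the fact that all remaining rays lie angularly between $p_{s+2}$ and $p_{s-1}$ to sandwich them into the negative orthant. The only difference is that you make explicit two points the paper leaves implicit, namely that the basis rays of a bilateral structure must be cyclically adjacent and the angular-monotonicity justification of the sandwiching step.
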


\begin{proof}
A complete toric surface $X$ is radiant if and only if for some $s$ all the vectors $p_i$, $i \ne s, s+1$ are contained in the negative orthant with respect to the basis $p_s, p_{s + 1}$. We have
$$
p_{s - 1}=c_s p_s - p_{s + 1}, \quad p_{s + 2} = c_{s + 1} p_{s + 1} - p_s,
$$
and all other vectors $p_i$ lie between $p_{s - 1}$ and $p_{s + 2}$. This implies the claim.
\end{proof}

For $m = 3$ we have only the projective plane $\PP^2$ represented by the sequence $(-1, -1, -1)$. The case $m = 4$ corresponds to Hirzebruch surfaces $\FF_q$ represented by sequences $(0, q ,0, -q)$ for some non-negative integer $q$. Since any smooth complete toric surface can be obtained either from $\PP^2$ or from $\FF_q$ by a sequence of blow-ups at $T$-fixed points, any sequence $(c_1, \ldots, c_m)$ can be obtained either from $(-1, -1, -1)$ or from $(0, q, 0, -q)$ by a series of operations, which add $1$ to some elements $c_s$ and $c_{s+1}$ (we assume that $c_{m + 1} = c_1$) and insert $1$ between them; see~\cite[Section~1.7]{Oda} or~\cite[Section~2.5]{F} for details.

Proposition~\ref{propseq} and the description of sequences $(c_1, \ldots, c_m)$ given above imply that for $m \le 5$ all smooth complete toric surfaces are radiant. For $m = 6$ we have the blow-up of $\PP^2$ at three $T$-fixed points corresponding to the sequence $(1, 1, 1, 1, 1, 1)$, which is not radiant.

\smallskip 

It is an interesting problem to study smooth radiant toric varieties in higher dimensions.


{}
\end{document}